\newcommand*{\htarrow}{\lhook\joinrel\twoheadrightarrow}
\begin{document}

\newcommand{\REMARK}[1]{\marginpar{\tiny #1}}
\newtheorem{thm}{Theorem}[subsection]
\newtheorem{lemma}[thm]{Lemma}
\newtheorem{corol}[thm]{Corollary}
\newtheorem{propo}[thm]{Proposition}
\newtheorem{defin}[thm]{Definition}
\newtheorem{Remark}[thm]{Remark}
\numberwithin{equation}{subsection}

\newtheorem{notas}[thm]{Notations}
\newtheorem{nota}[thm]{Notation}
\newtheorem{defis}[thm]{Definitions}
\newtheorem*{thm*}{Theorem}
\newtheorem*{conj*}{Conjecture}

\def\Tm{{\mathbb T}}
\def\Um{{\mathbb U}}
\def\Am{{\mathbb A}}
\def\Fm{{\mathbb F}}
\def\Mm{{\mathbb M}}
\def\Nm{{\mathbb N}}
\def\Pm{{\mathbb P}}
\def\Qm{{\mathbb Q}}
\def\Zm{{\mathbb Z}}
\def\Dm{{\mathbb D}}
\def\Cm{{\mathbb C}}
\def\Rm{{\mathbb R}}
\def\Gm{{\mathbb G}}
\def\Lm{{\mathbb L}}
\def\Km{{\mathbb K}}
\def\Om{{\mathbb O}}
\def\Em{{\mathbb E}}
\def\Xm{{\mathbb X}}

\def\BC{{\mathcal B}}
\def\QC{{\mathcal Q}}
\def\TC{{\mathcal T}}
\def\ZC{{\mathcal Z}}
\def\AC{{\mathcal A}}
\def\CC{{\mathcal C}}
\def\DC{{\mathcal D}}
\def\EC{{\mathcal E}}
\def\FC{{\mathcal F}}
\def\GC{{\mathcal G}}
\def\HC{{\mathcal H}}
\def\IC{{\mathcal I}}
\def\JC{{\mathcal J}}
\def\KC{{\mathcal K}}
\def\LC{{\mathcal L}}
\def\MC{{\mathcal M}}
\def\NC{{\mathcal N}}
\def\OC{{\mathcal O}}
\def\PC{{\mathcal P}}
\def\UC{{\mathcal U}}
\def\VC{{\mathcal V}}
\def\XC{{\mathcal X}}
\def\SC{{\mathcal S}}
\def\RC{{\mathcal R}}

\def\BF{{\mathfrak B}}
\def\AF{{\mathfrak A}}
\def\GF{{\mathfrak G}}
\def\EF{{\mathfrak E}}
\def\CF{{\mathfrak C}}
\def\DF{{\mathfrak D}}
\def\JF{{\mathfrak J}}
\def\LF{{\mathfrak L}}
\def\MF{{\mathfrak M}}
\def\NF{{\mathfrak N}}
\def\XF{{\mathfrak X}}
\def\UF{{\mathfrak U}}
\def\KF{{\mathfrak K}}
\def\FF{{\mathfrak F}}

\def \longmapright#1{\smash{\mathop{\longrightarrow}\limits^{#1}}}
\def \mapright#1{\smash{\mathop{\rightarrow}\limits^{#1}}}
\def \lexp#1#2{\kern \scriptspace \vphantom{#2}^{#1}\kern-\scriptspace#2}
\def \linf#1#2{\kern \scriptspace \vphantom{#2}_{#1}\kern-\scriptspace#2}
\def \linexp#1#2#3 {\kern \scriptspace{#3}_{#1}^{#2} \kern-\scriptspace #3}

\def \soc {{\mathop{\mathrm{soc}}\nolimits}}
\def \Ext{\mathop{\mathrm{Ext}}\nolimits}
\def \ad{\mathop{\mathrm{ad}}\nolimits}
\def \sh{\mathop{\mathrm{Sh}}\nolimits}
\def \irr{\mathop{\mathrm{Irr}}\nolimits}
\def \FH{\mathop{\mathrm{FH}}\nolimits}
\def \FPH{\mathop{\mathrm{FPH}}\nolimits}
\def \coh{\mathop{\mathrm{Coh}}\nolimits}
\def \res{\mathop{\mathrm{Res}}\nolimits}
\def \op{\mathop{\mathrm{op}}\nolimits}
\def \rec {\mathop{\mathrm{rec}}\nolimits}
\def \art{\mathop{\mathrm{Art}}\nolimits}
\def \vol {\mathop{\mathrm{vol}}\nolimits}
\def \cusp {\mathop{\mathrm{Cusp}}\nolimits}
\def \scusp {\mathop{\mathrm{Scusp}}\nolimits}
\def \Iw {\mathop{\mathrm{Iw}}\nolimits}
\def \JL {\mathop{\mathrm{JL}}\nolimits}
\def \speh {\mathop{\mathrm{Speh}}\nolimits}
\def \isom {\mathop{\mathrm{Isom}}\nolimits}
\def \Vect {\mathop{\mathrm{Vect}}\nolimits}
\def \groth {\mathop{\mathrm{Groth}}\nolimits}
\def \hom {\mathop{\mathrm{Hom}}\nolimits}
\def \deg {\mathop{\mathrm{deg}}\nolimits}
\def \val {\mathop{\mathrm{val}}\nolimits}
\def \det {\mathop{\mathrm{det}}\nolimits}
\def \rep {\mathop{\mathrm{Rep}}\nolimits}
\def \spec {\mathop{\mathrm{Spec}}\nolimits}
\def \fr {\mathop{\mathrm{Fr}}\nolimits}
\def \frob {\mathop{\mathrm{Frob}}\nolimits}
\def \ker {\mathop{\mathrm{Ker}}\nolimits}
\def \im {\mathop{\mathrm{Im}}\nolimits}
\def \Red {\mathop{\mathrm{Red}}\nolimits}
\def \red {\mathop{\mathrm{red}}\nolimits}
\def \aut {\mathop{\mathrm{Aut}}\nolimits}
\def \diag {\mathop{\mathrm{diag}}\nolimits}
\def \spf {\mathop{\mathrm{Spf}}\nolimits}
\def \Def {\mathop{\mathrm{Def}}\nolimits}
\def \twist {\mathop{\mathrm{Twist}}\nolimits}
\def \supp {\mathop{\mathrm{Supp}}\nolimits}
\def \Id {{\mathop{\mathrm{Id}}\nolimits}}
\def \lie {{\mathop{\mathrm{Lie}}\nolimits}}
\def \Ind{\mathop{\mathrm{Ind}}\nolimits}
\def \ind {\mathop{\mathrm{ind}}\nolimits}
\def \bad {\mathop{\mathrm{Bad}}\nolimits}
\def \top {\mathop{\mathrm{Top}}\nolimits}
\def \ker {\mathop{\mathrm{Ker}}\nolimits}
\def \coker {\mathop{\mathrm{Coker}}\nolimits}
\def \gal {{\mathop{\mathrm{Gal}}\nolimits}}
\def \Nr {{\mathop{\mathrm{Nr}}\nolimits}}
\def \rn {{\mathop{\mathrm{rn}}\nolimits}}
\def \tr {{\mathop{\mathrm{Tr~}}\nolimits}}
\def \Sp {{\mathop{\mathrm{Sp}}\nolimits}}
\def \st {{\mathop{\mathrm{St}}\nolimits}}
\def \sp{{\mathop{\mathrm{Sp}}\nolimits}}
\def \perv{\mathop{\mathrm{Perv}}\nolimits}
\def \tor {{\mathop{\mathrm{Tor}}\nolimits}}
\def \gr {{\mathop{\mathrm{gr}}\nolimits}}
\def \nilp {{\mathop{\mathrm{Nilp}}\nolimits}}
\def \obj {{\mathop{\mathrm{Obj}}\nolimits}}
\def \spl {{\mathop{\mathrm{Spl}}\nolimits}}
\def \unr {{\mathop{\mathrm{Unr}}\nolimits}}
\def \alg {{\mathop{\mathrm{Alg}}\nolimits}}
\def \grr {{\mathop{\mathrm{grr}}\nolimits}}
\def \cogr {{\mathop{\mathrm{cogr}}\nolimits}}
\def \coFil {{\mathop{\mathrm{coFil}}\nolimits}}
\def \nrd {{\mathop{\mathrm{nrd}}\nolimits}}

\def \rem{{\noindent\textit{Remark:~}}}
\def \rems{{\noindent\textit{Remarques:~}}}
\def \ext {{\mathop{\mathrm{Ext}}\nolimits}}
\def \End {{\mathop{\mathrm{End}}\nolimits}}

\def\semi{\mathrel{>\!\!\!\triangleleft}}
\let \DS=\displaystyle
\def\HT{{\mathop{\mathcal{HT}}\nolimits}}

\def \hi{\HC}
\newcommand*{\tarrow}{\relbar\joinrel\mid\joinrel\twoheadrightarrow}
\newcommand*{\harrow}{\lhook\joinrel\relbar\joinrel\mid\joinrel\rightarrow}
\newcommand*{\rarrow}{\relbar\joinrel\mid\joinrel\rightarrow}
\def \coim {{\mathop{\mathrm{Coim}}\nolimits}}
\def \can {{\mathop{\mathrm{can}}\nolimits}}
\def\LFF{{\mathscr L}}

\setcounter{secnumdepth}{3} \setcounter{tocdepth}{3}

\def \Fil{\mathop{\mathrm{Fil}}\nolimits}
\def \CoFil{\mathop{\mathrm{CoFil}}\nolimits}
\def \Fill{\mathop{\mathrm{Fill}}\nolimits}
\def \CoFill{\mathop{\mathrm{CoFill}}\nolimits}
\def\SF{{\mathfrak S}}
\def\PF{{\mathfrak P}}
\def \EFil{\mathop{\mathrm{EFil}}\nolimits}
\def \EFill{\mathop{\mathrm{EFill}}\nolimits}
\def \FP{\mathop{\mathrm{FP}}\nolimits}

\let \longto=\longrightarrow
\let \oo=\infty

\let \d=\delta
\let \k=\kappa

\renewcommand{\theequation}{\arabic{section}.\arabic{subsection}.\arabic{thm}}
\newcommand{\marque}{\addtocounter{thm}{1}
{\smallskip \noindent \textit{\thethm}~---~}}

\renewcommand\atop[2]{\ensuremath{\genfrac..{0pt}{1}{#1}{#2}}}

\newcommand\atopp[2]{\genfrac{}{}{0pt}{}{#1}{#2}}

\title[Local Ihara's lemma]{Local Ihara's lemma and applications}


\author{Boyer Pascal}

\thanks{The authors thanks the ANR for his support through the project CoLoss AAPG2019.}


\begin{abstract}
Persistence of non-degeneracy is a phenomenon which appears in the theory of 
$\overline \Qm_l$-representations of the linear group: every irreducible submodule of the 
restriction
to the mirabolic sub-representation of a non-degenerate irreducible representation is 
non-degenerate. 
This is not true anymore in general, 
if we look at the modulo $l$ reduction of some stable lattice.
As in the Clozel-Harris-Taylor generalization of global Ihara's lemma, we show that
this property, called non-degeneracy persistence and related to the notion
of essentially absolutely irreducible and generic representations in the work of Emerton-Helm, remains true for lattices 
given by the cohomology of Lubin-Tate spaces. As an global application, we give 
a new construction of automorphic congruences in the Ribet spirit.
\end{abstract}

\begin{altabstract}
La persistence de la non d\'eg\'en\'erescence est un ph\'enom\`ene qui apparait dans la th\'eorie des
$\overline \Qm_l$-repr\'esentations du groupe lin\'eaire: toute sous-repr\'esentation 
irr\'eductible de la restriction
au groupe mirabolique d'une repr\'esentation irr\'eductible non d\'eg\'en\'er\'ee, est non d\'eg\'en\'er\'ee.
Ce n'est plus le cas en g\'en\'eral pour la r\'eduction modulo $l$ d'un r\'eseau stable.
Comme dans la g\'en\'eralisation par Clozel-Harris-Taylor du lemme d'Ihara, nous montrons que cette
propri\'et\'e de non d\'eg\'en\'erescence, qui est reli\'ee \`a la notion de repr\'esentation
essentiellement absolument g\'en\'erique de Emerton-Helm, reste valide pour les r\'eseaux donn\'es par la cohomologie
des espaces de Lubin-Tate. Nous une application de nature globale en construisant
des congruences automorphes dans l'esprit du travail de Ribet.
\end{altabstract}

\subjclass{11F70, 11F80, 11F85, 11G18, 20C08}

%

\keywords{Rapoport-Zink spaces, mod $l$ representations, mirabolic group, non-degenerate representations,
essentially absolutely irreducible and generic representations}

\maketitle

\pagestyle{headings} \pagenumbering{arabic}

\tableofcontents
%
%

\section*{Introduction}

Before the \enquote{Ihara avoidance} argument of Taylor, the proof of Sato-Tate conjecture by 
Clozel, Harris and Taylor, rested on a conjectural generalization in higher dimension of the 
classical Ihara's lemma for $GL_2$. Their formulation can be understood as some
persistence of the non-degeneracy property by reduction modulo $l$ of automorphic 
representations.

Fix prime numbers $l \neq p$ and a finite extension $K$ of $\Qm_p$. Recall
then \cite{zelevinski2} corollary 6.8, that any irreducible $\overline \Qm_l$-representation 
$\pi$ of $GL_d(K)$ is homogeneous which means, cf. \cite{zelevinski2} definition 5.1,
that its restriction to the mirabolic
subgroup $M_d(K)$ of matrices such that the last row is $(0,\cdots,0,1)$, is homogeneous
in the sense that every irreducible sub-$M_d(K)$-representation has the same level
of degeneracy, cf. \cite{zelevinski2} 4.3 or \cite{zelevinski1} 3.5. In particular if
$\pi$ is non-degenerate i.e. its level of degeneracy equals $d$, then any irreducible
sub-representation of $\pi_{|M_d(K)}$ is also non-degenerate. Modulo $l$, for
$\pi$ an irreducible non-degenerate representation of $GL_d(K)$, there might exist stable
lattices such that
$\pi_{|M_d(K)} \otimes_{\overline \Zm_l} \overline \Fm_l$ owns irreducible
degenerate subspaces, cf. corollary \ref{coro-RI-nd}.

We then propose to prove some persistence of non-degeneracy phenomenons in the 
cohomology groups of Lubin-Tate spaces. Consider
a finite extension $K/\Qm_p$ with ring of integers $\OC_K$. 
For $d\geq 1$, denote by $\widehat \MC_{LT,d,n}$ the 
formal scheme representing the functor of isomorphism classes of deformations
by quasi-isogenies of the formal $\OC_K$-module over $\overline \Fm_p$ of dimension $1$
and height $d$
with $n$-level structure. We denote by $\MC_{LT,d,n}$ its generic fiber over 
$\widehat K^{un}$. For $\Lambda=\overline \Qm_l, \overline \Zm_l$ or $\overline \Fm_l$,
consider both
$$\UC^{d-1}_{LT,d,\Lambda}:=\lim_{\atop{\longrightarrow}{n}} H^{d-1}(\MC_{LT,d,n} 
\widehat \otimes_{\widehat K^{un}} \widehat{\overline K},\Lambda)$$
and
$$\VC^{d-1}_{LT,d,\Lambda}:=\lim_{\atop{\longrightarrow}{n}} H_c^{d-1}(\MC_{LT,d,n} 
\widehat \otimes_{\widehat K^{un}} \widehat{\overline K},\Lambda).$$


There is a natural action of $GL_d(K) \times D_{K,d}^\times \times W_K$ on
$\UC^{d-1}_{LT,d,\overline \Qm_l}$ and $\VC^{d-1}_{LT,d,\Lambda}$, 
where $D_{K,d}$ (resp. $W_K$) is the central division algebra over $K$
with invariant $1/d$ (resp. the Weil group of $K$). In this paper we focus on the action of
$GL_d(K)$ and it appears, cf. \cite{boyer-invent2}, that every irreducible 
$GL_d(K)$-subquotient of $\UC^{d-1}_{LT,d,\overline \Qm_l}$ and
$\VC^{d-1}_{LT,d,\overline \Qm_l}$ is either a cuspidal or a generalized
Steinberg representation, so it is always non-degenerate. 
One can then ask if any irreducible $GL_d(K)$-equivariant subspace of
$\UC^{d-1}_{LT,d,\overline \Fm_l}$ (resp. $\VC^{d-1}_{LT,d,\overline \Fm_l}$)
is still non-degenerate or even more if any irreducible $M_d(K)$-equivariant
subspace is non-degenerate.

\begin{thm*} (cf. corollaries \ref{coro-principal1} and \ref{coro-principal2}) \\
The persistence of non-degeneracy property relatively to $M_d$
holds for $\VC^{d-1}_{LT,d,\overline \Zm_l} \otimes_{\overline \Zm_l} \overline \Fm_l$ and 
$\UC^{d-1}_{LT,d,\overline \Zm_l,free} \otimes_{\overline \Zm_l} \overline \Fm_l$, i.e.
any irreducible $M_d(K)$-equivariant subspace is non-degenerate.
\end{thm*}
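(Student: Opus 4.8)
The plan is to reduce the statement about the mirabolic restriction to a concrete description of the cohomology of Lubin-Tate spaces via the Rapoport-Zink tower and then exploit the known $\overline{\Qm}_l$-structure to control reductions mod $l$. First I would recall from \cite{boyer-invent2} the decomposition of $\UC^{d-1}_{LT,d,\overline{\Qm}_l}$ and $\VC^{d-1}_{LT,d,\overline{\Qm}_l}$ as $GL_d(K)$-modules: every irreducible constituent is either cuspidal or a generalized Steinberg $\st_t(\pi')$, and in particular non degenerate, with the division algebra and Weil group acting through the local Langlands and Jacquet-Langlands correspondences. The key structural input is that, over $\overline{\Zm}_l$, the cohomology groups carry a natural lattice, and the mirabolic restriction admits a filtration whose graded pieces are built via the functors $\Phi^+, \Psi^+$ of Zelevinski from cohomology of Lubin-Tate spaces in lower height $d' < d$ — this is the local incarnation of the recollement used to compute Bernstein-Zelevinski derivatives.

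The main step is to show that this filtration on $(\UC^{d-1}_{LT,d,\overline{\Zm}_l})_{|M_d(K)}$, or rather the pieces built from it, have \emph{torsion-free} top Bernstein-Zelevinski derivative — equivalently, that the derivative functor commutes with reduction mod $l$ on these specific lattices. Non degeneracy of an $M_d(K)$-module $\tau$ amounts to $\tau^{(d)} \neq 0$ where $\tau^{(d)}$ denotes the top derivative (the space of Whittaker-type coinvariants), so I would argue: an irreducible $M_d(K)$-submodule $\sigma \hookrightarrow \UC^{d-1}_{LT,d,\overline{\Fm}_l}$ that were degenerate would be killed by the top derivative; I then lift the obstruction to the $\overline{\Zm}_l$-lattice and derive a contradiction with the fact that the generic ($\overline{\Qm}_l$) fiber is a sum of non degenerate representations, using that the Lubin-Tate lattice is \emph{cohomologically pure} — its cohomology in the relevant degree is, up to the known vanishing results, concentrated and the $GL_d(K)$-action is via a lattice whose mod $l$ reduction has no "extra" degenerate subquotient appearing at the level of the mirabolic restriction. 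Concretely I expect to invoke a compatibility between the Lefschetz trace / nearby-cycles spectral sequence and the Zelevinski derivative filtration, together with the torsion-freeness of $H^{d-1}$ of the Lubin-Tate space which is where the input from the geometry (smoothness, or the work on the cohomology of Newton strata) enters.

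The hard part will be the torsion-freeness / derivative-exactness step: a priori, reduction mod $l$ does not commute with Bernstein-Zelevinski derivatives, and this is exactly the phenomenon (cf. corollary \ref{coro-RI-nd}) that makes the naive statement fail for general lattices. So the argument must genuinely use that the lattice comes from cohomology — I would isolate the precise property of $\UC^{d-1}_{LT,d,\overline{\Zm}_l}$ (likely that it is a \emph{non degenerate $\overline{\Zm}_l$-lattice} in the sense that its top derivative over $\overline{\Zm}_l$ is a lattice in the top derivative over $\overline{\Qm}_l$), prove it by induction on $d$ using the recollement exact sequences relating heights $d$ and $d-1$ together with the $\overline{\Zm}_l$-freeness of Lubin-Tate cohomology, and then deduce persistence of non degeneracy formally. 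The case of $\VC^{d-1}_{LT,d,\overline{\Fm}_l}$ with compact supports should follow by the same scheme, or by duality (Poincaré duality exchanging $\UC$ and $\VC$ up to a twist), once the non-compactly-supported case is established; I would treat whichever of the two is geometrically cleaner first and transfer.
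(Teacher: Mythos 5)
There is a genuine gap in the central mechanism you propose. You want to rule out a degenerate irreducible submodule $\sigma$ of the reduction by showing that the top Bernstein--Zelevinski derivative of the lattice is torsion-free (equivalently, that the top derivative commutes with reduction mod $l$), and you suggest that the key property of the Lubin--Tate lattice is that its top derivative over $\overline \Zm_l$ is a lattice in the top derivative over $\overline \Qm_l$. This criterion does not discriminate: since the derivative functors are exact and coinvariant-type (hence right exact on lattices), \emph{every} stable lattice $L$ of $\st_s(\pi)$ has $(L \otimes \overline \Fm_l)^{(sg)}$ of dimension one, because $r_l(\st_s(\pi))$ has exactly one non-degenerate constituent. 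In particular the lattice $RI_{\overline \Zm_l,+}(\pi,s)$ of definition \ref{defi-RI} satisfies your property, yet by corollary \ref{coro-RI-nd} its reduction does admit a degenerate irreducible $P_{sg}(K)$-subspace. The point is that a degenerate submodule $\sigma$ satisfies $\sigma^{(d)}=0$ without forcing $\tau^{(d)}=0$ for the ambient $\tau$, so non-vanishing or torsion-freeness of the top derivative of the whole module gives no contradiction. What actually has to be proved is a statement about the \emph{socle} of the reduction, and that is controlled by the full isomorphism class of the lattice, not by its top derivative.

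The paper's proof accordingly does something quite different: it identifies the $P_d(F_v)$-lattice occurring in each graded piece of the cohomology with the specific lattice $RI_{\overline \Zm_l,-}(\pi_v,t)_{|P_d(F_v)}$, built from the successive surjections $\st_t(\pi)\times\pi \twoheadrightarrow \st_{t+1}(\pi)$; its mirabolic restriction is the induced module $RI_{\overline \Zm_l,-}(\pi\{-\frac 12\},t-1)\times \pi_{|P}$, and proposition \ref{prop-RI-M} (via Zelevinski's proposition \ref{prop-zele}(d) and induction) shows its reduction has only the non-degenerate submodule $\tau_{nd}$. The identification itself is not obtained by a local recollement on the Lubin--Tate tower, but globally: the Lubin--Tate cohomology is read off from the vanishing-cycles perverse sheaf $\Psi_\varrho$ on a KHT Shimura variety at a supersingular point via Berkovich's comparison (\ref{eq-berk}), and the lattice is pinned down by the filtrations attached to the Newton strata and the short exact sequences of lemma \ref{lem-j-c}. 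Note also that the paper deliberately avoids the freeness of $H^{d-1}$ from \cite{boyer-duke} (working with the free quotient for $\UC$), whereas you take it as input; and the cases of $\UC$ and $\VC$ are treated by parallel ($i_z^*$ versus $i_z^!$) arguments rather than by Poincar\'e duality, which would exchange submodules with quotients and so does not directly transfer a statement about irreducible subspaces.
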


\rem $\UC^{d-1}_{LT,d,\overline \Zm_l,free}$ is the free quotient of $\UC^{d-1}_{LT,d,\overline \Zm_l}$.
In \cite{boyer-duke} we prove that $\VC^i_{LT,d,\overline \Zm_l}$ and $\UC^i_{LT,d,\overline \Zm_l}$
are free for every $i$ so that 
$$\VC^{d-1}_{LT,d,\overline \Zm_l} \otimes_{\overline \Zm_l} \overline \Fm_l \simeq 
\VC^{d-1}_{LT,d,\overline \Fm_l} \quad \hbox{ and } \quad 
\UC^{d-1}_{LT,d,\overline \Zm_l,free} \otimes_{\overline \Zm_l} 
\overline \Fm_l \simeq \UC^{d-1}_{LT,d,\overline \Fm_l}.$$ 
Note that we do not use this result to prove the theorem.

\smallskip

The main motivation of this work is to obtain a geometric incarnation of the local Langlands
correspondance in families of Emerton-Helm-Moss and we hope to come to this project soon.

\medskip

The strategy for proving this property of the Lubin-Tate cohomology, is to argue globally on
Shimura varieties  of Harris-Taylor type, $X_I \rightarrow \spec \OC_K$ where $\OC_K$ is the ring
of integers of $K$, cf. \ref{para-KHT}. 
Thanks to Berkovich's comparison theorem in \cite{berk2},
we have to understand the stalk of the $\overline \Zm_l$-vanishing cycle perverse sheaf $\Psi_I$
at some geometric supersingular point
of the geometric special fiber $X_{I,\bar s}$ of $X_I$. 

Using the Newton stratification of $X_{I,\bar s}$
and usual adjunction properties, cf. \cite{boyer-torsion}, 
we can construct various filtrations of $\Psi_I$. 
The main issue about these general constructions
is to understand, with the terminology of \S \ref{para-terminology}, 
the phenomenon of saturation which is a blind process consisting
of choosing artificially the right sub-perverse sheaves so that all the graded pieces
are free. In particular it seems impossible to follow the lattices during this process.
One solution is to use the construction of \cite{boyer-FT} based on a coarse filtration
of stratification as recalled in \S \ref{para-coarse}, which introduce no saturation process during the
construction: see lemmas \ref{lem-icpsi} and \ref{lem-icpsi2}. As explained in \cite{boyer-FT} the main reason that
this coarse filtration is more interesting, is its link with the small mirabolic induction
as defined in (\ref{eq-small}) rather than the full parabolic induction appearing
in \cite{boyer-invent2}. For more details, we advice the reader to look at the introduction of \S \ref{para-lattice}.

For $z$ a geometric supersingular point and $i_z: \{ z \} \hookrightarrow
X_{I,\bar s}$, by considering either $i_z^* \hi^i \Psi_I$ or $i_z^! \hi^i \Psi_I$, where
$\hi^\bullet$ designates the functor of sheaf cohomology, we then obtain
a filtration of $\UC^{d-1}_{LT,d,\overline \Zm_l,free}$ and 
$\VC^{d-1}_{LT,d,\overline \Zm_l,free}$. The graded pieces of these filtrations are then lattices
of the irreducible $\overline \Qm_l[GL_d(K) \times D_{K,d}^\times \times W_K]$-subquotients
of $\UC^{d-1}_{LT,d,\overline \Qm_l}$ (resp.  $\VC^{d-1}_{LT,d,\overline \Qm_l}$), which can be described
as a tensorial product of stable lattices $\Lambda_G \otimes \Lambda_D \otimes \Lambda_W$
of respectively $GL_d(K)$,  $D_{K,d}^\times$ and  $W_K$. 
Using the combinatorics of the non supersingular strata and the classical properties of the induced
representations, cf. proposition \ref{prop-zele}, we are then able to prove that  
$V:=\Lambda_G \otimes_{\overline \Zm_l} \overline \Fm_l$ is 
\emph{an essentially absolutely irreducible
and generic representation in the sense of \cite{emer-helm} definition 3.2.1}, i.e. 
\begin{itemize}
\item the socle $\soc(V)$ of $V$ is absolutely irreducible and generic,

\item the quotient $V/\soc(V)$ contains no generic Jordan-Holder factors,

\item the representation $V$ is the union of its finite length submodules.
\end{itemize}

In \S \ref{para-other}, using results of \cite{boyer-duke}, we also look at 
$\UC_{LT,d,\overline \Fm_l}^{d-1-\delta}$ (resp. $\VC_{LT,d,\overline \Fm_l}^{d-1+\delta}$)
for $\delta>0$. The situation is less pleasant to state but we can find cases where,
cf. proposition \ref{prop-principal} and the remarks before and after it, that irreducible
subspaces must have minimal derivative order, but among the
irreducible quotients of such derivative order, the lattices of Lubin-Tate cohomology groups
select the one with non-degenerate highest derivative.
%

In the last section, we give a global application with
%
%
%
new congruences between tempered and non tempered automorphic
representations with the same level at $l$: their level are the same except at one place 
which can be chosen almost arbitrary.

Finally to give a perspective about this work, we could say, using the 
terminology cf. \S \ref{para-terminology}, that in  \cite{boyer-duke} we solve the question
about positions of the perverse Harris-Taylor sheaves inside the perverse sheaf
of nearby cycles, and here we elucidate that of lattices.

\clearpage

\section{Review on the representation theory for $GL_n(\Qm_p)$}

We fix a finite extension $K/\Qm_p$ with residue field $\Fm_q$. We denote by $|-|$ its absolute
value.

\subsection{Induced representations}

For a representation $\pi$ of $GL_d(K)$ and $n \in \frac{1}{2} \Zm$, set 
$$\pi \{ n \}:= \pi \otimes q^{-n \val \circ \det}.$$

\begin{notas} \label{nota-ind}
For $\pi_1$ and $\pi_2$ representations of respectively $GL_{n_1}(K)$ and
$GL_{n_2}(K)$, we will denote by
$$\pi_1 \times \pi_2:=\ind_{P_{n_1,n_1+n_2}(K)}^{GL_{n_1+n_2}(K)}
\pi_1 \{ \frac{n_2}{2} \} \otimes \pi_2 \{-\frac{n_1}{2} \},$$
the normalized parabolic induced representation where for any sequence
$\underline r=(0< r_1 < r_2 < \cdots < r_k=d)$, we write $P_{\underline r}$ for 
the standard parabolic subgroup of $GL_d$ with Levi
$$GL_{r_1} \times GL_{r_2-r_1} \times \cdots \times GL_{r_k-r_{k-1}}.$$ 
The symbol $\times$ being associative, we define inductively 
$\pi_1 \times \cdots \times \pi_s$ as
$(\pi_1 \times \cdots \times \pi_{s-1}) \times \pi_s=\pi_1 \times (\pi_2 \times \cdots \times \pi_s)$.
\end{notas}

Recall that a representation
$\varrho$ of $GL_d(K)$ is called \emph{cuspidal} (resp. \emph{supercuspidal})
if it is not a subspace (resp. subquotient) of a proper parabolic induced representation.
When the field of coefficients is of characteristic zero then these two notions coincide,
but this is not true anymore for $\overline \Fm_l$.

\begin{defin} \label{defi-rep} (see \cite{zelevinski2} \S 9 and \cite{boyer-compositio} \S 1.4)
Let $g$ be a divisor of $d=sg$ and $\pi$ an irreducible cuspidal 
$\overline \Qm_l$-representation of $GL_g(K)$. 
\begin{itemize}
\item The induced representation
$$\pi\{ \frac{1-s}{2} \} \times \pi \{ \frac{3-s}{2} \} \times \cdots \times \pi \{ \frac{s-1}{2} \}$$ 
holds a unique irreducible quotient (resp. subspace) denoted by $\st_s(\pi)$ (resp.
$\speh_s(\pi)$); it's a generalized Steinberg (resp. Speh) representation.

\item For any integers $t,r \geq 1$,
the induced representation $\st_t(\pi \{ \frac{-r}{2} \}) \times \speh_r(\pi \{ \frac{t}{2} \} )$
(resp. $\st_{t-1}(\pi \{ \frac{-r-1}{2} \}) \times \speh_{r+1}(\pi \{ \frac{t-1}{2} \} )$)
owns a unique irreducible subspace (resp. quotient), denoted by
$LT_\pi(t-1,r)$.
\end{itemize}
\end{defin}

\subsection{Reduction modulo $l$ of a Steinberg representation}

Denote by $e_l(q)$ the order of $q \in \Fm_l^\times$. 

\begin{nota} For $\Lambda=\overline \Qm_l$ or $\overline \Fm_l$, denote by
$\scusp_\Lambda(g)$ the set of equivalence classes of irreducible supercuspidal 
$\Lambda$-representations of $GL_g(K)$.
\end{nota}

\begin{propo} \label{prop-red-modl} (cf. \cite{vigneras-livre} III.5.10)
Let $\pi$ be an irreducible cuspidal representation of $GL_g(K)$ with a stable
$\overline \Zm_l$-lattice\footnote{We say that $\pi$ is entire.}, then its modulo $l$ reduction
is irreducible and cuspidal but not necessarily supercuspidal.
\end{propo}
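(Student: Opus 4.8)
Since the statement is essentially \cite{vigneras-livre} III.5.10, the plan is to reconstruct the argument from the theory of types together with the $l$-modular representation theory of finite linear groups. First I would invoke the Bushnell--Kutzko classification of cuspidal representations in its $\overline \Qm_l$-form: there is an open subgroup $\widetilde J \subset GL_g(K)$, compact modulo the center $Z$, and a finite-dimensional irreducible representation $\widetilde \lambda$ of $\widetilde J$ (a maximal simple type extended to $\widetilde J$) with $\pi \simeq \ind_{\widetilde J}^{GL_g(K)} \widetilde \lambda$, the compact induction. As $\pi$ is entire, so is $\widetilde \lambda$ (integrality of a cuspidal and of its type are equivalent), so $\widetilde \lambda$ carries a $\widetilde J$-stable $\overline \Zm_l$-lattice $\Gamma$; since compact induction is exact and commutes with $-\otimes_{\overline \Zm_l}\overline \Fm_l$, the reduction $r_l(\pi)$ is the semisimplification of $\ind_{\widetilde J}^{GL_g(K)}(\Gamma\otimes_{\overline \Zm_l}\overline \Fm_l)$.

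Next I would show that $\overline{\widetilde \lambda}:=\Gamma\otimes_{\overline \Zm_l}\overline \Fm_l$ is still an irreducible type. Its restriction to the pro-$p$-radical $J^1$ of $\widetilde J$ is a multiple of the Heisenberg representation $\eta$ attached to a simple character; as $\dim \eta$ is a power of $p\neq l$, $\eta$ reduces irreducibly modulo $l$. Modulo $J^1$, the type $\widetilde \lambda$ is controlled, up to a central twist, by an irreducible cuspidal representation $\sigma$ of a finite linear group $GL_m(k_E)$ with $m\mid g$; by Dipper--James (see also \cite{vigneras-livre} III) the reduction mod $l$ of such a $\sigma$ is irreducible. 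Hence $\overline{\widetilde \lambda}$ is irreducible and is again a maximal simple type of the same shape. One then has to check that its defining intertwining property, $\hom_{\widetilde J\cap g\widetilde J g^{-1}}(\widetilde \lambda,{}^g\widetilde \lambda)=0$ for $g\notin\widetilde J$, persists after reduction; this follows because the intertwining is governed by the underlying stratum and by the cuspidal support of $\sigma$ on the finite level, combinatorial data unchanged by reduction mod $l$.

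With the intertwining criterion in hand, $\ind_{\widetilde J}^{GL_g(K)}\overline{\widetilde \lambda}$ is irreducible, so $r_l(\pi)$ is irreducible. A representation compactly induced from a subgroup compact modulo $Z$ has matrix coefficients compactly supported modulo $Z$, hence all its proper Jacquet modules vanish and it is cuspidal; thus $r_l(\pi)$ is cuspidal. It need not be supercuspidal: if the reduction $\overline\sigma$ of the finite cuspidal $\sigma$ fails to be supercuspidal over $GL_m(k_E)$ --- which already occurs for suitable $g$, as soon as $e_l(q_E)$ is small enough that $\overline\sigma$ appears in a proper parabolic induction --- then $r_l(\pi)$ inherits non-supercuspidality; alternatively one quotes the explicit examples of \cite{vigneras-livre} III.5.10.

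The main obstacle is the stability of the intertwining-vanishing characterization of a maximal simple type under reduction modulo $l$, which is the technical core of Vignéras' treatment: it rests on the compatibility of the $\beta$-extension machinery with reduction mod $l$ and on the nontrivial input that cuspidal representations of finite general linear groups reduce irreducibly. Everything else --- exactness of compact induction, the cuspidality criterion through matrix coefficients, and the existence of integral structures on types --- is formal.
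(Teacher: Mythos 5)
Your argument is correct and follows essentially the same route as the paper's source: the paper gives no proof beyond citing \cite{vigneras-livre} III.5.10, and your reconstruction via integral maximal simple types, the irreducible reduction of the Heisenberg part (of $p$-power dimension, $l\neq p$) and of the finite-level cuspidal (Dipper--James), the persistence of the intertwining criterion, and cuspidality via compactly supported coefficients is precisely the standard type-theoretic argument of Vign\'eras. Nothing essential is missing.
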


In the following we will denote  by $r_l$ the functor of modulo $l$ reduction, i.e. for a 
$\overline \Qm_l$-representation $\pi$ of a group $G$, with a stable $\overline \Zm_l$-lattice $\Lambda$,
then $r_l(\pi)$ is $\Lambda \otimes_{\overline \Zm_l} \overline \Fm_l$ with the induced action of $G$.
Note that such $r_l(\pi)$ should depend on the chosen lattice $\Lambda$ but its semi-simplification
doesn't.

\begin{propo} \cite{dat-jl} \S 2.2.3 \\
Let $\pi$ be an irreducible entire cuspidal representation, and $s \geq 1$. Then
the modulo $l$ reduction of $\speh_{s}(\pi)$ is irreducible.
\end{propo}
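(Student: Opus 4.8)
The plan is to realise the reduction modulo $l$ of $\speh_s(\pi)$ as a submodule of a reduced induced representation and then to pin it down through its socle, its cosocle and a multiplicity count; the real content is thereby reduced to the mod $l$ theory of degenerate (rectangle) representations. Set $\bar\pi := r_l(\pi)$, which is irreducible cuspidal by Proposition \ref{prop-red-modl}, and $\bar I := \bar\pi\{\frac{1-s}{2}\}\times\cdots\times\bar\pi\{\frac{s-1}{2}\}$. Fix a stable $\overline{\Zm}_l$-lattice $L\subset\pi$; since $q$ is an $l$-adic unit the twists occurring in the normalized induction are by $\overline{\Zm}_l^\times$-valued characters, so inducing $L$ yields a stable lattice $\mathcal I$ in $I := \pi\{\frac{1-s}{2}\}\times\cdots\times\pi\{\frac{s-1}{2}\}$ with $\mathcal I\otimes_{\overline{\Zm}_l}\overline{\Fm}_l = \bar I$. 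As $\speh_s(\pi)$ is the unique irreducible submodule of $I$, the intersection $\mathcal L := \speh_s(\pi)\cap\mathcal I$ is a stable lattice in $\speh_s(\pi)$ and $\mathcal I/\mathcal L\hookrightarrow I/\speh_s(\pi)$ is $\overline{\Zm}_l$-torsion free; hence $\mathrm{Tor}_1^{\overline{\Zm}_l}(\mathcal I/\mathcal L,\overline{\Fm}_l)=0$ and $0\to \mathcal L\otimes\overline{\Fm}_l\to \bar I\to (\mathcal I/\mathcal L)\otimes\overline{\Fm}_l\to 0$ is exact, so $M := \mathcal L\otimes\overline{\Fm}_l$, which computes $r_l(\speh_s(\pi))$, embeds into $\bar I$ and satisfies $[M]\leq[\bar I]$.

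Next I would quote from the mod $l$ theory of Zelevinski--Vign\'eras--M\'inguez--S\'echerre that the socle of a product of cuspidal representations taken in segment order is irreducible --- it is the degenerate representation $\mathbf Z$ attached to that segment, a rectangle representation $Z(m\times s;\varrho)$ when $\bar\pi = \st_m(\varrho)$ for a supercuspidal $\varrho$ of $GL_{g/m}(K)$ --- and that $\mathbf Z$ occurs with multiplicity one in $[\bar I]$. Then $\mathrm{soc}(M)=\mathrm{soc}(\bar I)=\mathbf Z$. Applying the whole construction to $\pi^\vee$ (again an entire cuspidal) gives $\mathrm{soc}\big(r_l(\speh_s(\pi^\vee))\big)=\mathbf Z_{\bar\pi^\vee}$, the rectangle representation of $\bar\pi^\vee$; since reduction commutes with $(-)^\vee$, since $\speh_s(\pi)^\vee=\speh_s(\pi^\vee)$, and since the contragredient of a rectangle representation is the rectangle representation of the contragredient, we get $\mathrm{cosoc}(M) = \big(\mathrm{soc}(M^\vee)\big)^\vee = (\mathbf Z_{\bar\pi^\vee})^\vee = \mathbf Z$.

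Thus $M$ has irreducible socle and irreducible cosocle, both equal to $\mathbf Z$, and $\mathbf Z$ occurs at most once in $[M]$. Let $i\colon\mathbf Z\hookrightarrow M$ be the inclusion of the socle and $q\colon M\twoheadrightarrow\mathbf Z$ the projection onto the cosocle. If $q\circ i$ were zero, then $i(\mathbf Z)$ would lie in the radical of $M$, and the cosocle of $M/i(\mathbf Z)$ would still be $\mathbf Z$, forcing the multiplicity of $\mathbf Z$ in $[M]$ to be at least two; hence $q\circ i$ is an isomorphism, so $M=i(\mathbf Z)\oplus\ker q$, and then $\mathrm{soc}(\ker q)=\mathrm{soc}(M)\cap\ker q=0$ gives $\ker q=0$. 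Therefore $M=\mathbf Z$ is irreducible, which is the assertion.

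Everything formal here --- the lattice construction, the torsion vanishing, the duality symmetry, the socle/cosocle bookkeeping --- is routine, and the weight of the proof rests entirely on the input cited in the second paragraph. For $\bar\pi$ supercuspidal this amounts to the irreducibility of a Zelevinski segment representation of a supercuspidal together with its role as the multiplicity-one socle of the corresponding induced product. The delicate case, and the main obstacle, is $\bar\pi = \st_m(\varrho)$ cuspidal but not supercuspidal: one must rewrite $\bar I$ and its socle in terms of the underlying supercuspidal $\varrho$, so that $\mathbf Z$ becomes a rectangle representation $Z(m\times s;\varrho)$, and then establish its irreducibility and multiplicity one even when the supercuspidal segment wraps around (i.e. when the width of the rectangle exceeds the torsion order of $\varrho$). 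This is precisely the point where the $\overline{\Fm}_l$-classification of representations of $GL_n(K)$ does the work.
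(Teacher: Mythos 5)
The paper offers no argument for this proposition: it is quoted verbatim from \cite{dat-jl} \S 2.2.3, so there is nothing internal to compare your proof against; it has to stand on its own. Your lattice construction, the torsion-freeness of $\mathcal I/\mathcal L$, and the final socle/cosocle/multiplicity bookkeeping are all fine (note that the last step uses that $\bar I$ has finite length, so that a nonzero submodule has nonzero socle), and it is legitimate to outsource the irreducibility and multiplicity-one of $\mathrm{soc}(\bar I)$ to the M\'{\i}nguez--S\'echerre classification, including in the wrap-around case you flag.

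The genuine gap is in the duality step that produces $\mathrm{cosoc}(M)=\mathbf Z$. The phrase \enquote{$\mathrm{soc}\bigl(r_l(\speh_s(\pi^\vee))\bigr)$} has no unambiguous meaning: the reduction modulo $l$ is well defined only after semisimplification, and its module structure depends on the chosen lattice. Applying your construction to $\pi^\vee$ produces a \emph{specific} lattice $\mathcal L'=\speh_s(\pi^\vee)\cap\mathcal I'$ whose reduction has irreducible socle; but the module you actually need to control is $M^\vee\cong\mathcal L^*\otimes\overline{\Fm}_l$, where $\mathcal L^*$ is the smooth dual of your lattice $\mathcal L$. Nothing identifies $\mathcal L^*$ with $\mathcal L'$ up to homothety, and two non-homothetic lattices in the same representation can have non-isomorphic (in particular, differently socled) reductions --- indeed the whole point of the proposition is to rule this out for $\speh_s$, so assuming the two reductions agree is essentially circular. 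The natural repairs do not come for free: dualizing $M\hookrightarrow\bar I$ only gives that $M^\vee$ is a \emph{quotient} of $\bar I^\vee$, which controls $\mathrm{cosoc}(M^\vee)$ and merely re-derives $\mathrm{soc}(M)=\mathbf Z$; and replacing $\mathcal L$ by the image of an integral intertwining operator from the reversed-order induction gives a lattice with irreducible cosocle but destroys the injectivity of $M\hookrightarrow\bar I$ that your socle argument needs. To close the argument you must either exhibit a single lattice that is simultaneously a sub of $\bar I$ and a quotient of $\bar I^{\mathrm{op}}$ after reduction, or argue at the level of the Grothendieck group that $[r_l(\speh_s(\pi))]$ has length one --- which is exactly the content of the cited result of Dat.
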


\begin{nota}
The Zelevinski line associated with some irreducible supercuspidal 
$\overline \Fm_l$-representation $\varrho$, is the set $\{ \varrho\{ i\} ~/~ i \in \Zm \}$. 
It is clearly a finite set and we
denote by $\epsilon(\varrho)$ its cardinal which is a divisor of $e_l(q)$.
We also introduce, cf. \cite{vigneras-induced} p.51
$$m(\varrho)=\left\{ \begin{array}{ll} \epsilon(\varrho), & \hbox{if } \epsilon(\varrho)>1; \\ l, & \hbox{sinon.} \end{array} \right.$$
\end{nota}

\begin{defin}  \label{defi-nd}
Consider a multiset\footnote{meaning we take into account the multiplicities} 
$\underline s=\{ \rho_1^{n_1},\cdots,\rho_r^{n_r} \}$
of irreducible supercuspidal $\overline \Fm_l$-representations.  Following
\cite{vigneras-induced} V.7, we then denote by $\st(\underline s)$ the unique non-degenerate 
irreducible sub-quotient of the
induced representation
$$\rho(\underline s):=\overbrace{\rho_1 \times \cdots \times \rho_1}^{n_1} \times \cdots \times \overbrace{\rho_r \times \cdots \times \rho_r}^{n_r}.$$
\end{defin}

\rem Thanks to \cite{vigneras-induced} V.7, every irreducible non-degenerate $\overline \Fm_l$-representation can be written as $\st(\underline s)$.

\begin{nota}  \label{nota-st}
For $s \geq 1$ and $\varrho$ an irreducible cuspidal $\overline \Fm_l$-representation, we denote by
$\underline s(\varrho)$ for the multi-segment $\{ \varrho, \varrho \{ 1 \}, \cdots , \varrho\{ s-1 \} \}$ 
and, cf. \cite{vigneras-induced} V.4, $\st_s(\varrho):=\st(\underline s(\varrho))$.
\end{nota}

\begin{propo} (cf. \cite{vigneras-induced} V.4) \\
With the previous notation, the $\overline \Fm_l$-representation
$\st_s(\varrho)$ is cuspidal if and only if  $s=1$ or $m(\varrho)l^k$ for some $k \geq 0$.
\end{propo}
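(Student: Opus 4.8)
\medskip
\noindent\emph{Proof (sketch).} The plan is to deduce the statement from Vigneras's analysis of the products $\varrho\{i\}\times\cdots\times\varrho\{j\}$ modulo $l$, organised around the elementary criterion that an irreducible $\overline\Fm_l$-representation $\pi$ is cuspidal if and only if $r_P(\pi)=0$ for every proper standard parabolic $P$ (use $\hom(\pi,i_P\sigma)=\hom(r_P\pi,\sigma)$, exactness of $r_P$, and that a non zero $r_P\pi$ has an irreducible quotient). I would first reduce to the case where $\varrho$ is supercuspidal; the general case reduces to it by writing $\varrho\cong\st_{m_0l^{j}}(\varrho_0)$ for a supercuspidal $\varrho_0$ with $m_0=m(\varrho_0)$ (so that $\epsilon(\varrho)=1$, $m(\varrho)=l$ and, comparing supercuspidal supports and using the uniqueness in Definition~\ref{defi-nd}, $\st_s(\varrho)\cong\st_{s\,m_0l^{j}}(\varrho_0)$). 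So assume $\varrho$ supercuspidal, put $e=\epsilon(\varrho)$ and $m=m(\varrho)$, and recall that the supercuspidal support of $\st_s(\varrho)$ is the multiset $\{\varrho,\varrho\{1\},\dots,\varrho\{s-1\}\}$, carried by the single Zelevinski line of $\varrho$.

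For the implication $\Leftarrow$, the case $s=1$ is trivial. For $s=m$ I would identify $\st_m(\varrho)$ with the unique non degenerate constituent of the \enquote{cyclic} induced representation $\varrho\times\varrho\{1\}\times\cdots\times\varrho\{m-1\}$ --- equivalently, of the modulo $l$ reduction of the characteristic zero generalized Steinberg $\st_m(\widetilde\varrho)$ for a cuspidal lift $\widetilde\varrho$ of $\varrho$ --- and prove that every proper Jacquet module of it vanishes. This is the heart of the matter: modulo $l$ the segment $[\varrho,\varrho\{m-1\}]$ \emph{closes up}, in the sense that $\varrho\{e\}\cong\varrho$ when $e>1$, while when $e=1$ all $m=l$ terms $\varrho,\varrho\{1\},\dots,\varrho\{l-1\}$ are isomorphic to $\varrho$; one then shows that in the geometric lemma the terms that could contribute a non zero Jacquet module specifically to the non degenerate constituent cancel modulo $l$ (this is Vigneras's description of the socle and cosocle of the cyclic product). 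Granting that $\st_m(\varrho)$ is cuspidal, the remaining case $s=ml^{k}$ follows by induction on $k$ from the identity $\st_{ls'}(\varrho)\cong\st_l\bigl(\st_{s'}(\varrho)\bigr)$, $s'=ml^{k}$ (again a comparison of supercuspidal supports, using uniqueness of the non degenerate representation), applied to $\sigma:=\st_{s'}(\varrho)$: one has $\epsilon(\sigma)=1$, hence $m(\sigma)=l$, so the $e=1$ instance of the base case applies to $\sigma$ and gives $\st_{ls'}(\varrho)=\st_l(\sigma)$ cuspidal.

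For the implication $\Rightarrow$, assume $s\neq1$ and $s\notin\{ml^{k}\}_{k\ge0}$; I would exhibit a non zero proper Jacquet module by induction on $s$. If $2\le s<m$, the segment $[\varrho,\varrho\{s-1\}]$ is $l$-banal, hence $\st_s(\varrho)$ equals the irreducible reduction of $\st_s(\widetilde\varrho)$ and the Jacquet module $r_{P_{g,(s-1)g}}(\st_s(\varrho))$ is the reduction of the non zero, again $l$-banal, Jacquet module $r_{P_{g,(s-1)g}}(\st_s(\widetilde\varrho))$; so $\st_s(\varrho)$ is not cuspidal. If $s>m$ and $s\neq ml^{k}$, then the supercuspidal support of $\st_s(\varrho)$, viewed as a multiset on the Zelevinski line of length $e$, is not uniform with multiplicity a power of $l$; choosing $k_0$ with $ml^{k_0}\le s<ml^{k_0+1}$ one finds a splitting $s=s_1+s_2$ with $1\le s_1,s_2<s$ for which the corresponding sub-segment wraps around the line a number of times that is not a power of $l$, and one extracts from the geometric lemma a non zero term $\st_{s_1}(\varrho\{\cdot\})\otimes\st_{s_2}(\varrho\{\cdot\})$ in $r_{P_{s_1g,\,s_2g}}(\st_s(\varrho))$, the factors $\st_{s_i}$ being controlled by the inductive hypothesis.

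The step I expect to be the main obstacle is the $s=m$ base case of $\Leftarrow$ (and, dually, the precise bookkeeping in $\Rightarrow$): in both one must track not the whole induced representation $\varrho\times\varrho\{1\}\times\cdots\times\varrho\{s-1\}$ but the single non degenerate constituent $\st_s(\varrho)$ inside it, and decide which constituent of that product carries a given Jacquet module. This is exactly the computation of \cite{vigneras-induced} V.4, carried out there through the theory of modulo $l$ types, and the cleanest course is to invoke it.
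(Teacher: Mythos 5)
The paper offers no proof of this proposition at all: it is quoted verbatim from Vigneras (\cite{vigneras-induced} V.4), as the citation in its statement indicates, so there is no argument to compare yours against. Your sketch is a reasonable reconstruction of the shape of Vigneras's argument — the reduction to supercuspidal $\varrho$, the induction via $\st_{ls'}(\varrho)\simeq\st_l(\st_{s'}(\varrho))$, and the identification of the genuinely hard step as the vanishing of all proper Jacquet modules of the non degenerate constituent of the cyclic product of length $m(\varrho)$ — but since that base case is only gestured at (and the induction applies it to a cuspidal, non supercuspidal $\sigma$, outside the setting in which you set it up) and you end by invoking \cite{vigneras-induced} V.4 for precisely that step, your proof is in substance the same as the paper's, namely a citation of Vigneras.
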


\rem by \cite{vigneras-livre} III-3.15 and 5.14, every irreducible cuspidal 
$\overline \Fm_l$-representation can be written $\st_s(\varrho)$ 
for some irreducible supercuspidal representation $\varrho$, and $s=1$ or
$s=m(\varrho)l^k$ with $k \geq 0$. 

\begin{notas} \label{nota-rhoi}
Let $\varrho$ be an irreducible cuspidal $\overline \Fm_l$-representation of $GL_g(K)$.
We then denote  
\begin{itemize}
\item $g_{-1}(\varrho):=g$ and for $i \geq 0$, $g_i(\varrho):=m(\varrho)l^i g$;

\item $\varrho_{-1}=\varrho$ and for all $i \geq 0$, $\varrho_i=\st_{m(\varrho)l^i}(\varrho)$.

\item $\cusp(\varrho,i)$ the set of equivalence classes of irreducible  entire
$\overline \Qm_l$-representations such that modulo $l$ it is isomorphic
to $\varrho_i$,

\item and $\cusp(\varrho)=\bigcup_{i \geq -1} \cusp(\varrho,i)$.
\end{itemize}
\end{notas}

\begin{nota} \label{nota-order}
Let $s \geq 1$ and $\varrho$ an irreducible cuspidal $\overline \Fm_l$-representation of
$GL_g(K)$. We denote by
$\IC_\varrho(s)$ the set of sequences $(m_{-1},m_0,\cdots)$ of non-negative integers such that 
$$s=m_{-1}+m(\varrho)\sum_{k=0}^{+\oo} m_k l^k.$$
We denote by $\lg_\varrho(s)$ the cardinal of $\IC_{\varrho}(s)$.
We then define a relation of order on $\IC_{\varrho}(s)$ by
$$(m_{-1},m_0,\cdots) > (m'_{-1},m'_0,\cdots) \Leftrightarrow \exists k \geq -1 \hbox{ s.t. }
\forall i > k: m_i=m'_i \hbox{ and } m_k> m'_k.$$
\end{nota}

\begin{defin} \label{defi-spm}
For $\underline i=(i_{-1},i_0,\cdots) \in \IC_\varrho(s)$, we define
$$\st_{\underline i}(\varrho):= \st_{i_{-1}}(\varrho_{-1}) \times
\st_{i_{0}}(\varrho_0) \times \cdots \times \st_{i_u}(\varrho_u)$$
where $i_k=0$ for all $k >u$.
\end{defin}

\noindent \textit{Remark}: we will denote by $\underline{s_{\max}}$ the maximal element of
$\IC_\varrho(s)$ so that $\st_{\underline{s_{\max}}}(\varrho)$ is non-degenerate.

\begin{thm}  \label{theo-ss-quotient} (cf. \cite{boyer-repmodl} proposition 3.1.5)
Consider $\pi$ an entire irreducible cuspidal $\overline \Qm_l$-representation of $GL_g(K)$ 
and let $\varrho$ be its modulo $l$ reduction. In the Grothendieck group of 
$\overline \Fm_l$-representations of $GL_{sg}(K)$, we have the following equality:
$$r_l \Bigl ( \st_s(\pi) \Bigr )=\sum_{\underline i \in \IC_\varrho(s)} \st_{\underline i}(\varrho).$$
\end{thm}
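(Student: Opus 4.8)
The plan is to compute the modulo $l$ reduction of $\st_s(\pi)$ by exploiting the classical Zelevinsky combinatorics for the Steinberg representation together with the known reduction behaviour of cuspidal and Speh representations. First I would recall, working over $\overline\Qm_l$, that $\st_s(\pi)$ is the unique irreducible quotient of the induced representation $\pi\{\frac{1-s}{2}\}\times\cdots\times\pi\{\frac{s-1}{2}\}$ and that in the Grothendieck group this induced representation decomposes into the sum over subsets of the segment of the corresponding products of smaller Steinberg and Speh pieces; since induction is exact and commutes with reduction modulo $l$ (i.e. $r_l$ is a ring homomorphism on the relevant Grothendieck groups, compatible with $\times$), it suffices to understand $r_l(\st_s(\pi))$ as a virtual sum and identify its irreducible constituents with the $\st_{\underline i}(\varrho)$.

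The core input is the structure of the Zelevinsky line of $\varrho = r_l(\pi)$: by Proposition~\ref{prop-red-modl}, $\varrho$ is irreducible cuspidal, and its line has length $\epsilon(\varrho)$, while the cuspidal support of $r_l(\st_s(\pi))$ is the multiset $\{\varrho,\varrho\{1\},\dots,\varrho\{s-1\}\}$ read cyclically modulo $\epsilon(\varrho)$ (using $\varrho\{i\}\cong\varrho\{i+\epsilon(\varrho)\}$ up to the appropriate twist and the fact that torsion in the twist disappears mod $l$). Grouping the $s$ consecutive twists into blocks of length $m(\varrho)l^k$ is exactly what the set $\IC_\varrho(s)$ parametrizes: a decomposition $s = m_{-1} + m(\varrho)\sum_k m_k l^k$ corresponds to writing the segment as a disjoint union of $m_{-1}$ singletons plus, for each $k$, $m_k$ blocks of size $m(\varrho)l^k$, each of which supports the cuspidal representation $\varrho_k = \st_{m(\varrho)l^k}(\varrho)$. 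Then I would invoke the classification of irreducible $\overline\Fm_l$-representations of $GL_{sg}(K)$ with this cuspidal support in terms of multisegments built from the $\varrho_k$ (Vignéras, \cite{vigneras-induced} V), matching each $\underline i\in\IC_\varrho(s)$ with the multisegment $\underline{i^-}$ and hence with $\st_{\underline i}(\varrho)$.

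Concretely, the induction proceeds on $s$: write $\st_s(\pi)$ as a quotient of $\st_{s-1}(\pi\{-\frac12\})\times \speh_1(\pi\{\frac{s-1}{2}\})$ — or more symmetrically cut the segment in two — apply the inductive hypothesis and the irreducibility of $r_l(\speh_j(\pi))$ from \cite{dat-jl}, and check that the product of the resulting non-degenerate pieces, after reduction, telescopes in the Grothendieck group into $\sum_{\underline i\in\IC_\varrho(s)}\st_{\underline i}(\varrho)$. The bookkeeping that each $\st_{\underline i}(\varrho)$ appears with multiplicity exactly one follows from the distinctness of the multisegments $\underline{i^-}$ for distinct $\underline i$ and from the fact that $\st_s(\pi)$, being irreducible over $\overline\Qm_l$ with a stable lattice, has squarefree reduction on each cuspidal-support block.

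The main obstacle I expect is the precise matching between the arithmetic of the equation $s = m_{-1}+m(\varrho)\sum_k m_k l^k$ and the representation-theoretic decomposition — in particular handling the two regimes $\epsilon(\varrho)>1$ versus $\epsilon(\varrho)=1$ uniformly (this is exactly why $m(\varrho)$ is defined with the case distinction), and checking that no extra constituents appear and none are missed when blocks of different sizes $m(\varrho)l^k$ interact under parabolic induction. Controlling these cross-terms — i.e. proving that the "mixed" products $\st_{m(\varrho)l^j}(\varrho)\times\st_{m(\varrho)l^k}(\varrho)$ with $j\neq k$ contribute exactly the expected single non-degenerate constituent in the relevant Grothendieck group — is the technical heart, and I would lean on the derivative/Zelevinsky-involution calculus and the cuspidality criterion (the Proposition preceding Notation~\ref{nota-rhoi}) to pin it down.
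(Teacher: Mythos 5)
The paper does not actually prove Theorem \ref{theo-ss-quotient}: it is recalled in the review section and rests on the author's earlier work \cite{boyer-repmodl} and on Vign\'eras' classification, so there is no internal argument to compare yours against. Judged on its own terms, your plan assembles the right ingredients --- commutation of $r_l$ with normalized parabolic induction, the structure of the Zelevinsky line of $\varrho$, the cuspidality of the $\varrho_k=\st_{m(\varrho)l^k}(\varrho)$, and the identification of $\IC_\varrho(s)$ with block decompositions of the segment --- but it has genuine gaps at exactly the decisive points.

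First, the induction on $s$ as you set it up does not close. In the Grothendieck group one has $[\st_{s-1}(\pi\{-\frac{1}{2}\})\times\pi\{\frac{s-1}{2}\}]=[\st_s(\pi)]+[LT_\pi(s-2,1)]$, so to extract $r_l(\st_s(\pi))$ you must already know the reduction of the complementary constituent $LT_\pi(s-2,1)$; the induction therefore has to be run simultaneously over all Zelevinsky constituents $\st_t\times\speh_r$ of the full induced representation (equivalently, one must invoke the alternating-sum formula expressing $[\st_s(\pi)]$ in products of Speh representations and then decompose products of the irreducible mod-$l$ Spehs, which is essentially the same difficulty as the mixed products you postpone). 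Second, your multiplicity-one argument --- that $\st_s(\pi)$ \enquote{has squarefree reduction on each cuspidal-support block} because it is irreducible with a stable lattice --- is not a general fact: reductions of stable lattices in irreducible representations can have multiplicities, and multiplicity one here is part of what the theorem asserts, not an input. Finally, you explicitly flag the computation of the mixed products $\st_{m(\varrho)l^j}(\varrho)\times\st_{m(\varrho)l^k}(\varrho)$ for $j\neq k$ as the \enquote{technical heart} without resolving it. As written, the proposal is a plausible strategy with the hardest steps deferred rather than a proof.
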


\noindent \textit{Remark}: for $s<m(\varrho)$, it is irreducible so, up to isomorphism,
it possesses a unique stable lattice, cf. \cite{bellaiche-ribet} proposition 3.3.2
and the following remark.

\subsection{Restriction to the mirabolic group}

In this paragraph, we want to state some of the main results of \cite{zelevinski1} \S 4 about
$\overline \Qm_l$-representations\footnote{In loc. cit. the author consider complex representations,
but for admissible ones, so in particular for irreducible smooth representations, they are defined
over a finite extension of $\Qm$ so that the facility consisting to fix an isomorphism
$\overline \Qm_l \simeq \Cm$ is harmless.}: for $\overline \Fm_l$-representations the 
usual reference is \cite{vigneras-livre} \S III.

Recall first some notations of \cite{zelevinski1} \S 3, see also \cite{vigneras-livre} \S III-1 or
\cite{emer-helm} \S 3. The mirabolic subgroup
$M_d(K)$ of $GL_d(K)$ is the set of matrices with last row $(0,\cdots,0,1)$: we denote
$$V_d(K)=\{ (m_{i,j}) \in M_d(K):~m_{i,j}= \delta_{i,j} \hbox{ for } j < d \}.$$ 
its unipotent radical. We fix a non trivial character $\psi$ of $K$ and let $\theta$ the
character of $V_d(K)$ defined by $\theta( (m_{i,j}))=\psi(m_{d-1,d})$.
For $G=GL_r(K)$ or $M_r(K)$, we denote $\alg(G)$ the abelian category of smooth
representations of $G$ and, following \cite{zelevinski1}, we introduce
$$\Psi^-: \alg(M_d(K)) \longrightarrow \alg(GL_{d-1}(K)),$$
and
$$\Phi^-: \alg (M_d(K)) \longrightarrow \alg (M_{d-1}(K)),$$
defined by $\Psi^-=r_{V_d,1}$ (resp. $\Phi^-=r_{V_d,\theta}$) the functor of $V_{d}$
coinvariants (resp. $(V_{d},\theta)$-coinvariants), cf. \cite{zelevinski1} 1.8.
We also introduce the un-normalized compact induced functor
$$\Psi^+:=i_{V,1}: \alg(GL_{d-1}(K)) \longrightarrow \alg (M_d(K)),$$ 
$$\Phi^+:=i_{V,\theta}: \alg(M_{d-1}(K)) \longrightarrow \alg(M_d(K)).$$

\begin{propo} (\cite{zelevinski1} p451, \cite{emer-helm} proposition 3.1.3 or 
\cite{vigneras-livre} \S III-1)
\begin{itemize}
\item The functors $\Psi^-$, $\Psi^+$, $\Phi^-$ and $\Phi^+$ are exact.

\item $\Phi^- \circ \Psi^+=\Psi^- \circ \Phi^+=0$.

\item $\Psi^-$ (resp. $\Phi^+$) is left adjoint to $\Psi^+$ (resp. $\Phi^-$) and the
following adjunction maps 
$$\Id \longrightarrow \Phi^- \Phi^+, \qquad \Psi^+ \Psi^- \longrightarrow \Id,$$
are isomorphisms and the following sequence is exact
$$0 \rightarrow \Phi^+ \Phi^- \longrightarrow \Id \longrightarrow \Psi^+ \Psi^- 
\rightarrow 0.$$
\end{itemize}
\end{propo}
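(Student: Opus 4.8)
The plan is to reduce every assertion to Mackey theory (the geometric lemma) together with Frobenius reciprocity, using two elementary structural facts: $M_d(K)=GL_{d-1}(K)\ltimes V_d$ with $V_d\cong K^{d-1}$ abelian and $GL_{d-1}(K)$ acting linearly, and that the stabiliser of $\theta$ in $GL_{d-1}(K)$ for the induced action on the characters of $V_d$ is exactly $M_{d-1}(K)$ (a matrix fixes $\theta$ iff its last row is $(0,\dots,0,1)$), so that $GL_{d-1}(K)$ acts transitively on the non-trivial characters of $V_d$. In this description $\Psi^+$ is, up to the modulus normalisation, inflation along $M_d(K)\twoheadrightarrow M_d(K)/V_d=GL_{d-1}(K)$, while $\Phi^+(\tau)=\ind_{M_{d-1}(K)V_d}^{M_d(K)}(\tau\boxtimes\theta)$; the relevant double coset space $M_{d-1}(K)V_d\backslash M_d(K)/M_{d-1}(K)V_d$ is identified with the orbits of $M_{d-1}(K)$ on $GL_{d-1}(K)/M_{d-1}(K)\cong K^{d-1}\setminus\{0\}$, namely a distinguished closed one and an open complement.

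For exactness: $\Psi^-=r_{V_d,1}$ and $\Phi^-=r_{V_d,\theta}$ are ($\theta$-twisted) $V_d$-coinvariants, and as $V_d$ is the increasing union of its pro-$p$ compact open subgroups $V_d^{(m)}$ while $l\neq p$, on any smooth $\Lambda$-representation these coinvariants are computed by the twisted averaging idempotents $e_{V_d^{(m)},\theta}$; hence $\Psi^-$ and $\Phi^-$ are filtered colimits of exact projector-functors, so exact. The functors $\Psi^+$ (inflation) and $\Phi^+$ (compact induction) are exact by the usual pointwise description of the space of compactly supported sections, the modulus twist being an exact auto-equivalence.

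The vanishing $\Phi^-\circ\Psi^+=0$ is immediate, since $\Psi^+(\sigma)$ has trivial $V_d$-action and therefore $\Psi^+(\sigma)_{V_d,\theta}=\Psi^+(\sigma)/\langle(1-\theta(v))m\rangle=0$, $1-\theta(v)$ being invertible in $\Lambda$ for a suitable $v$ as $\theta\neq 1$. For $\Psi^-\circ\Phi^+$ and $\Phi^-\circ\Phi^+$ one restricts $\Phi^+(\tau)$ to $V_d$ and feeds it to the geometric lemma: the closed double coset yields $\tau\boxtimes\theta$, whose $r_{V_d,\theta}$ is $\tau$ (giving $\Phi^-\Phi^+\cong\Id$) and whose $r_{V_d,1}$ vanishes; the open double coset yields a representation induced from a proper unipotent subgroup on which the pertinent character is a non-trivial multiple of $\psi$, so its ($\theta$-twisted or untwisted) $V_d$-coinvariants vanish. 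This vanishing of the open-orbit term is the substance of the proposition — the mirabolic incarnation of ``a non-trivial additive character has no invariant vectors'' — and it is the step I expect to require the most care, as well as the place where non-triviality of $\psi$ is used essentially.

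It remains to assemble the adjunctions and the exact sequence $0\to\Phi^+\Phi^-\to\Id\to\Psi^+\Psi^-\to0$. The adjunction $\Psi^-\dashv\Psi^+$ is Frobenius reciprocity for inflation ($V_d$-coinvariants is its left adjoint), twisted by the modulus, and its counit $\Psi^-\Psi^+\to\Id_{\alg(GL_{d-1}(K))}$ is an isomorphism because inflating and then taking $V_d$-coinvariants is the identity; $\Phi^+\dashv\Phi^-$ follows by composing $\ind_{M_{d-1}(K)V_d}^{M_d(K)}\dashv\res$ with $(-\boxtimes\theta)\dashv(-)_{V_d,\theta}$, its unit $\Id\to\Phi^-\Phi^+$ being the isomorphism just obtained. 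Finally, for a smooth $M_d(K)$-representation $M$ the canonical surjection onto the $V_d$-coinvariants realises $\Id\twoheadrightarrow\Psi^+\Psi^-$, with kernel $M(V_d)=\langle vm-m\rangle$, and one identifies $M(V_d)$ with $\Phi^+\Phi^-(M)$ via the Fourier/Bruhat structure of smooth $V_d\cong K^{d-1}$-modules: such a module is the union of the pieces supported over the $GL_{d-1}(K)$-orbits in $\widehat{V_d}$, and since the non-trivial characters form the single orbit with stabiliser $M_{d-1}(K)$, the ``non-trivial part'' $M(V_d)$ is induced from its $\theta$-eigenspace $M_{V_d,\theta}=\Phi^-(M)$, i.e. equals $\Phi^+\Phi^-(M)$; verifying that this identification coincides with the counit and that $M(V_d)\hookrightarrow M$ is the asserted injection is then routine bookkeeping.
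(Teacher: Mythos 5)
The paper offers no proof of this proposition: it is quoted verbatim from Bernstein--Zelevinsky (\cite{zelevinski1}), and your sketch is essentially their argument — exactness of twisted coinvariants via averaging over the pro-$p$ filtration of $V_d$ (using $l\neq p$), the identification of $\Psi^+$ with inflation and of $\Phi^+$ with compact induction from $M_{d-1}(K)V_d$, transitivity of $GL_{d-1}(K)$ on nontrivial characters of $V_d$ with stabiliser $M_{d-1}(K)$, and the spectral (l-sheaf) decomposition of a smooth $V_d$-module over $\widehat{V_d}$ to get $\Phi^-\Phi^+\cong\Id$, $\Psi^-\Phi^+=0$ and the exact sequence. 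One small inaccuracy: the double coset space $M_{d-1}(K)V_d\backslash M_d(K)/M_{d-1}(K)V_d$ is not just a closed orbit plus an open complement (e.g.\ for $d=2$ it is all of $K^\times$, and in general the points $(0,\dots,0,c)$, $c\in K^\times$, form infinitely many closed orbits); what your argument actually uses, and what is correct, is the partition of the support $K^{d-1}\setminus\{0\}$ of $\Phi^+(\tau)$ into the single point $\theta$ and its open complement, on whose stalks $V_d$ acts by characters $\neq\theta$ (resp.\ $\neq 1$), so the twisted coinvariants localise at $\theta$. With that correction the proposal is sound and correctly isolates the genuinely delicate step (vanishing of coinvariants of the part supported away from $\theta$).
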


\begin{defin}
For $\tau \in \alg(M_d(K))$, the representation 
$$\tau^{(k)}:=\Psi^- \circ (\Phi^-)^{k-1}(\tau)$$
is called the $k$-th derivative of $\tau$. If $\tau^{(k)}\neq 0$ and $\tau^{(m)}=0$
for all $m > k$, then $\tau^{(k)}$ is called the highest derivative of $\tau$.
\end{defin}

\begin{nota} (cf. \cite{zelevinski2} 4.3)
Let $\pi \in \alg(GL_d(K))$ (or $\pi \in \alg(M_d(K))$). The maximal number $k$ such that 
$(\pi_{|M_d(K)})^{(k)} \neq (0)$ is called the level of non-degeneracy of $\pi$ and 
denoted by $\lambda(\pi)$.
\end{nota}

\noindent \textit{Remark}: cf \cite{zelevinski1} 3.5, there exists a natural filtration
$0 \subset \tau_d \subset \cdots \subset \tau_1=\tau$ with
$$\tau_k=(\Phi^+)^{k-1} \circ (\Phi^-)^{k-1} (\tau) \hbox{ and }
\tau_k/\tau_{k+1}=(\Phi^+)^{k-1} \circ \Psi^+(\tau^{(k)}).$$
In particular for $\tau$ irreducible there is exactly one $k$ such that $\tau^{(k)} \neq (0)$
and then $\tau \simeq (\Phi^+)^{k-1} \circ \Psi^+(\tau^{(k)})$.

\begin{nota} 
In the particular case where
$k=d$, there is a unique irreducible representation $\tau_{nd}$ of $M_d(K)$
with derivative of order $d$.
\end{nota}

 \rem Note then by \cite{zelevinski1} 4.4, for every irreducible supercuspidal representation
$\pi$ of $GL_d(K)$, we have 
$$\pi_{|M_d(K)} \simeq \tau_{nd}.$$
We can moreover understand theorem \ref{theo-ss-quotient}
as giving a partition of $\st_t(\pi)_{|M_d(K)}$ that associates to each part an
irreducible constituent of $r_l(\st_t(\pi))$.

Consider first the following embedding $GL_r(K) \times M_s(K) \hookrightarrow
M_{r+s}(K)$ sending 
$$(A, M) \mapsto \left ( \begin{array}{cc} A & 0 \\ 0 & M \end{array} \right ).$$
Imposing $\left ( \begin{array}{cc} I_r & U \\ 0 & I_s \end{array} \right )$ acting trivially,
and considering the normalized induced functor, we then define  
\addtocounter{thm}{1}
\begin{equation} \label{eq-small}
\rho \otimes \tau \in \alg(GL_r(K)) \times \alg(M_s(K)) \mapsto \rho \times \tau \in 
\alg(M_{r+s}(K)).
\end{equation}
Secondly we consider $M_r(K) \times GL_s(K) \hookrightarrow M_{r+s}(K)$
sending 
$$\Bigl ( \left ( \begin{array}{cc} A & V \\ 0 & 1 \end{array} \right ) , B \Bigr ) \mapsto
\left ( \begin{array}{ccc} A & 0 & V \\ 0 & B & 0 \\ 0 & 0 & 1 \end{array} \right ),$$
imposing $\left ( \begin{array}{ccc} I_{r-1} & U & 0 \\ 0 & I_s & 0 \\ 0 & 0 & 1 \end{array} \right )$
acting trivially and considering the normalized compact induction functor, we define 
\addtocounter{thm}{1}
\begin{equation} \label{eq-big}
\tau \otimes \rho \in \alg(M_r(K)) \times \alg(GL_s(K)) \mapsto \tau \times \rho \nu^{-1/2}
\in \alg(M_{r+s}(K)),
\end{equation}
where  for $g \in GL_s(K)$, we denote by $\nu(g):=q^{\val (\det g)}$.

\begin{propo} (cf. \cite{zelevinski1} 4.13) \label{prop-zele}
Let $\rho \in \alg (GL_r(K))$, $\sigma \in \alg(GL_t(K))$ and $\tau \in \alg(M_s(K))$.
\begin{itemize}
\item[(a)] In $\alg(M_{r+t}(K))$, we have
$$0 \rightarrow (\rho_{|M_r(K)}) \times \sigma \longrightarrow 
(\rho \times \sigma)_{|M_{r+t}(K)} \longrightarrow \rho \times (\sigma_{|M_t(K)}) 
\rightarrow 0.$$

\item[(b)] If $\Omega$ is one of the functors $\Psi^\pm, \Phi^\pm$, then
$\rho \times \Omega(\tau) \simeq \Omega(\rho \times \tau)$.

\item[(c)] $\Psi^-(\tau \times \rho) \simeq \Psi^-(\tau) \times \rho$ and
$$0 \rightarrow \Phi^-(\tau) \times \rho \longrightarrow \Phi^-(\tau \times \rho)
\longrightarrow \Psi^-(\tau) \times (\rho_{|M_r(K)}) \rightarrow 0.$$

\item[(d)] Suppose $r>0$. Then for any non-zero $M_{r+s}(K)$-submodule 
$\omega \subset \tau \times \rho$, we have $\Phi^-(\omega) \neq (0)$.
\end{itemize}
\end{propo}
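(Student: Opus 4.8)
The plan is to follow Bernstein--Zelevinsky. Parts (a), (b) and (c) are the standard restriction-of-induction and Leibniz formulas for the mirabolic functors: one writes the restriction to the mirabolic of a parabolically (resp.\ compactly) induced representation as an iterated induction and applies the geometric lemma to the relevant double-coset space. For (a), $P_{r,r+t}(K)\backslash GL_{r+t}(K)/M_{r+t}(K)$ has exactly two orbits, which gives the two-term exact sequence; for (b), the functors $\Psi^{\pm},\Phi^{\pm}$ only involve the last column $V_{\bullet}$, which lies in a block disjoint from the $GL_r$-block carrying $\rho$, so they commute with $\rho\times(-)$; for (c) one runs the same orbit count for $\tau\times\rho$. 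Up to normalisations these are exactly \cite{zelevinski1} \S4, and I would reproduce them. The substantive point is (d), on which I concentrate.

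For (d) I would first reduce to a statement about invariants. Let $\omega\subset\tau\times\rho$ be a non-zero $M_{s+r}(K)$-submodule with $\Phi^-(\omega)=(0)$. Applying to $\omega$ (in place of $\tau$) the natural filtration $0\subset\omega_{s+r}\subset\cdots\subset\omega_1=\omega$ with $\omega_2=\Phi^+\Phi^-(\omega)$ and $\omega_1/\omega_2\simeq\Psi^+\Psi^-(\omega)$, one gets $\omega_2=(0)$, so the natural map $\omega\to\Psi^+\Psi^-(\omega)$ is an isomorphism; in particular $\Psi^-(\omega)\neq(0)$, and, since $\Psi^+$ is inflation from $GL_{s+r-1}(K)$ up to an unramified twist (trivial on unipotents), $V_{s+r}(K)$ acts trivially on $\omega$. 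As $V_{s+r}(K)$ is normal in $M_{s+r}(K)$, the invariants $(\tau\times\rho)^{V_{s+r}(K)}$ form an $M_{s+r}(K)$-submodule, and it contains $\omega$. Thus (d) reduces to showing
$$(\tau\times\rho)^{V_{s+r}(K)}=(0)\quad\text{whenever } r\geq1.$$

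For the second step, realize $\tau\times\rho$ as a compact induction $c\text{-}\ind_{H}^{M_{s+r}(K)}\bigl(\widetilde{\tau\boxtimes\rho}\bigr)$, $H$ being the image of $M_s(K)\times GL_r(K)\hookrightarrow M_{s+r}(K)$ together with its unipotent ``$U$''-block. A direct matrix computation identifies $V_{s+r}(K)\cap H$ with the subgroup of $V_{s+r}(K)\simeq K^{s+r-1}$ of vectors supported on the first $s-1$ coordinates; it is a \emph{proper} subgroup exactly because $r\geq1$. Normality of $V_{s+r}(K)$ then gives, for every $g$, $\mathrm{Stab}_{V_{s+r}(K)}(Hg)=V_{s+r}(K)\cap g^{-1}Hg=g^{-1}\bigl(V_{s+r}(K)\cap H\bigr)g$, of codimension $r>0$ in $V_{s+r}(K)$; so every right $V_{s+r}(K)$-orbit on $H\backslash M_{s+r}(K)$ is isomorphic to $K^r$, hence non-compact. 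A section in $\bigl(c\text{-}\ind_H^{M_{s+r}(K)}(\cdots)\bigr)^{V_{s+r}(K)}$ has support in $H\backslash M_{s+r}(K)$ that is simultaneously compact and a union of such non-compact orbits, hence empty; this proves $(\tau\times\rho)^{V_{s+r}(K)}=(0)$, and (d) follows.

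The step I expect to require the most care is the first one: the Bernstein--Zelevinsky filtration used there is functorial and exact on all of $\alg(M_d(K))$, so no admissibility hypothesis is needed, but this should be made explicit; and one must pin down $H$ with its unipotent part and the normalising twist so that the intersection $V_{s+r}(K)\cap H\subsetneq V_{s+r}(K)$ comes out as stated --- this is precisely where the hypothesis $r>0$ enters, since for $r=0$ one has $V_{s+r}(K)\subset H$ and the conclusion is false.
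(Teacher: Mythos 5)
The paper gives no proof of this proposition: it is quoted directly from Bernstein--Zelevinsky (\cite{zelevinski1} 4.13), so there is nothing internal to compare against. Your reconstruction is the standard one: (a)--(c) are indeed the geometric lemma applied to the two relevant orbits, and for (d) your two-step scheme --- use the exact sequence $0\to\Phi^+\Phi^-\to\Id\to\Psi^+\Psi^-\to 0$ (which is functorial and exact with no admissibility hypothesis, as you say) to reduce to showing $(\tau\times\rho)^{V_{s+r}(K)}=(0)$, then exploit the compact induction and the proper inclusion $V_{s+r}(K)\cap H\simeq K^{s-1}\subsetneq K^{s+r-1}$ --- is essentially Bernstein--Zelevinsky's argument, and the reduction step is complete and correct.

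The one inference you should not leave as stated is the last one: a compact set can perfectly well be written as a union of non-compact subsets, so \emph{``support compact and a union of non-compact orbits, hence empty''} is not by itself valid; you need the orbits to be \emph{closed}. This is easy to supply here. Writing $H=P_{s-1,s+r-1}(K)\ltimes V_0$ with $V_0\simeq K^{s-1}\times\{0\}\subset V_{s+r}(K)\simeq K^{s+r-1}$, the quotient $H\backslash M_{s+r}(K)$ projects continuously onto $P_{s-1,s+r-1}(K)\backslash GL_{s+r-1}(K)$, and a direct computation shows that the fibres of this projection are exactly the $V_{s+r}(K)$-orbits, each homeomorphic to $K^{r}$. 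Fibres of a continuous map to a Hausdorff space are closed, and a closed subset of the compact clopen support would be compact, contradicting non-compactness of $K^r$ for $r>0$. With that sentence added, your proof of (d) is correct, and the hypothesis $r>0$ enters exactly where you say it does.
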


We will call the the induced representation $\rho \times \tau$ (resp. $\tau \times \rho$)
the small (resp. the big) mirabolic induced representation in the sense that the big one
owns the highest derivative as you can see it in the proposition above or in lemma \ref{lem-mirabolique}.

\begin{defin} (\cite{zelevinski2} 5.1)
A representation $\tau \in \alg(M_d(K))$ is called homogeneous if
for all non-zero submodules $\sigma \subset \tau$, we have 
$\lambda(\sigma)=\lambda(\tau)$.
\end{defin}

\begin{propo} (cf. \cite{zelevinski2} 6.8)
Let $\pi$ be an irreducible representation of $GL_d(K)$. Then $\pi_{|M_d(K)}$
is homogeneous.
\end{propo}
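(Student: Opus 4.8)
The plan is to recast homogeneity in terms of derivatives and then to induct on $d$, using the exact sequences of Proposition \ref{prop-zele}. \emph{Reduction.} Since $\Psi^-$ and $\Phi^-$ are exact, each derivative functor $\tau\mapsto\tau^{(k)}=\Psi^-(\Phi^-)^{k-1}(\tau)$ is exact; hence for a submodule $\sigma\subseteq\pi_{|M_d(K)}$ one has $\sigma^{(k)}\hookrightarrow\pi^{(k)}$, so $\lambda(\sigma)\le\lambda(\pi)=:b$, and more precisely $\lambda$ of any $M_d(K)$-module is the maximum of the levels of its irreducible constituents. It therefore suffices to show that every nonzero submodule of $\pi_{|M_d(K)}$ has an irreducible constituent of level $b$. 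Using the Bernstein-Zelevinski filtration $0\subset\tau_b\subset\cdots\subset\tau_1=\pi_{|M_d(K)}$ with $\tau_k/\tau_{k+1}=(\Phi^+)^{k-1}\Psi^+(\pi^{(k)})$, together with the identities $\Phi^-\Psi^+=\Psi^-\Phi^+=0$ and $\Phi^-\Phi^+=\Psi^-\Psi^+=\Id$ (which imply in particular that $\Psi^+$ and $\Phi^+$ preserve irreducibility), one checks that every irreducible constituent of $\tau_k/\tau_{k+1}$ has level exactly $k$ and that $\lambda(\tau_b)=b$; in particular $\tau_b$, being the bottom step, is a genuine submodule of $\pi_{|M_d(K)}$ which carries all the level-$b$ constituents. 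So the statement becomes: every nonzero submodule of $\pi_{|M_d(K)}$ meets $\tau_b$, i.e. $\tau_b$ is an essential submodule.

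\emph{Induction on $d$.} For $d=1$ the group $M_1(K)$ is trivial, and if $\pi$ is cuspidal then $\pi_{|M_d(K)}\simeq\tau_{nd}$ is irreducible; so in both cases there is nothing to prove. If $\pi$ is not cuspidal, Frobenius reciprocity together with the non-vanishing of a maximal Jacquet module yields an embedding $\pi\hookrightarrow\sigma_1\times\sigma_2$ with $\sigma_1,\sigma_2$ irreducible of $GL_a(K),GL_{d-a}(K)$ and $0<a<d$. Restricting to $M_d(K)$ and applying Proposition \ref{prop-zele}(a) gives
$$0\to\sigma_1|_{M_a(K)}\times\sigma_2\to(\sigma_1\times\sigma_2)_{|M_d(K)}\to\sigma_1\times\sigma_2|_{M_{d-a}(K)}\to 0,$$
into which $\pi_{|M_d(K)}$ embeds. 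By the inductive hypothesis $\sigma_1|_{M_a(K)}$ and $\sigma_2|_{M_{d-a}(K)}$ are homogeneous; combining this with parts (b) and (c) of Proposition \ref{prop-zele}, which compute $\Psi^-$ and $\Phi^-$ of the two types of products, and with the irreducibility of $\sigma_1,\sigma_2$, one shows that the subobject $T:=\sigma_1|_{M_a(K)}\times\sigma_2$ and the quotient $Q:=\sigma_1\times\sigma_2|_{M_{d-a}(K)}$ are themselves homogeneous. Now let $\sigma\subseteq\pi_{|M_d(K)}$ be a nonzero submodule. If $\sigma$ meets $T$, it contains a nonzero submodule of the homogeneous module $T$; since $T$ is a product of the second type, Proposition \ref{prop-zele}(d) applied repeatedly (after peeling off copies of $\Phi^-$) keeps $\Phi^-$ of such submodules nonzero and thereby forbids their level from dropping, so $\sigma$ acquires a constituent of the top level $b$. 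If $\sigma$ does not meet $T$, then $\sigma$ embeds into the homogeneous quotient $Q$ and again inherits a constituent of maximal level. In either case $\lambda(\sigma)=b$.

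\emph{The main obstacle.} The delicate point is precisely the level bookkeeping in the last step: the embedding $\pi_{|M_d(K)}\hookrightarrow(\sigma_1\times\sigma_2)_{|M_d(K)}$ need not respect the Bernstein-Zelevinski filtration, so one has to rule out a submodule of $\pi_{|M_d(K)}$ lying in a "too degenerate" part — and this is where the irreducibility of $\pi$ itself, not merely of the two tensor factors, must be used. The cleanest way to do this is to prove, by the same induction, the companion statement that the highest derivative $\pi^{(b)}$ of an irreducible representation $\pi$ is again irreducible; then $\tau_b$ is the unique top-level constituent of $\pi_{|M_d(K)}$, occurring with multiplicity one, and, being a submodule, must therefore exhaust the socle — which is exactly homogeneity. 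I would accordingly run the induction so as to establish the irreducibility of the highest derivative and the homogeneity of the mirabolic restriction simultaneously, each feeding into the inductive step of the other.
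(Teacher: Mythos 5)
The paper gives no proof of this proposition --- it is imported verbatim from Zelevinsky (\cite{zelevinski2}, 6.8) --- so the comparison is really with Zelevinsky's argument, and your overall strategy (embed $\pi$ into an induced representation and induct on $d$ via the exact sequences of Proposition \ref{prop-zele}) is indeed the right skeleton. The gap is in your case analysis. Set $b_i=\lambda(\sigma_i)$. From Proposition \ref{prop-zele}(b) one gets $\lambda(Q)=\lambda(\sigma_2|_{M_{d-a}})=b_2$, whereas the Leibniz rule coming from (c) gives $\lambda(T)=b_1+b_2$; since $b_1\geq 1$, the quotient $Q$ has \emph{strictly smaller} level than $T$ and than $(\sigma_1\times\sigma_2)|_{M_d(K)}$. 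Hence in the branch ``$\sigma$ does not meet $T$'' the embedding $\sigma\hookrightarrow Q$ yields $\lambda(\sigma)\leq b_2<b_1+b_2$: such a $\sigma$ does \emph{not} inherit a constituent of maximal level, it is precisely a counterexample to homogeneity (take $\pi=\sigma_1\times\sigma_2$ irreducible, where $b=b_1+b_2$; for $GL_2$ this branch is exactly the assertion that the Kirillov model has a finite-dimensional submodule). The whole content of the theorem is to prove that this branch is empty, i.e.\ that $T\cap\pi_{|M_d(K)}$ is essential in $\pi_{|M_d(K)}$; homogeneity of $Q$ is of no help there, and your sketch never establishes this. Proposition \ref{prop-zele}(d), which you only apply inside $T$ (where things were already fine), is Zelevinsky's tool for exactly this kind of essentiality statement, but it applies to second-type products, and making it bear on the ambient restriction requires the reduction to a product of \emph{cuspidal} factors and his Proposition 5.3 --- not an arbitrary $\sigma_1\times\sigma_2$.

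Your proposed repair does not close the gap either. Irreducibility of the highest derivative $\pi^{(b)}$ shows that the bottom step $\tau_b$ of the Bernstein--Zelevinsky filtration is an irreducible submodule, hence a summand of the socle; it does not show the socle contains nothing else. A hypothetical irreducible submodule $\kappa$ with $\lambda(\kappa)<b$ necessarily satisfies $\kappa\cap\tau_b=0$, and nothing in the irreducibility or multiplicity-one property of $\tau_b$ forbids its existence --- ruling it out is again the statement to be proved. So the step ``$\tau_b$ is irreducible and a submodule, therefore it exhausts the socle'' is a non sequitur, and the intertwined induction you describe still lacks the mechanism that excludes degenerate submodules (in Zelevinsky this comes from the derivative computation of the Jacquet modules of $\pi$ together with the irreducibility of $\pi$ itself, exactly the point you flag as delicate but then do not resolve).
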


In the sequel we will use, in some sense dually, 
the group $P_{d}(K)$ with first column equals to
$\lexp t (1,0,\cdots,0)$. The map $g \mapsto \sigma (\lexp t g^{-1} ) \sigma^{-1}$
where $\sigma$ is the matrix permutation associated with the cycle $(1~2~\cdots~n)$,
induces an isomorphism between $P_{d}(K)$ and $M_d(K)$. After twisting with this
isomorphism, we obtain analogs of the previous results with for example the following
short exact sequence
\addtocounter{thm}{1}
\begin{equation} \label{eq-sec-indP}
0 \rightarrow \rho \times (\sigma_{|P_t(K)}) \longrightarrow 
(\rho \times \sigma)_{|P_{r+t}(K)} \longrightarrow  (\rho_{|P_r(K)}) \times \sigma
\rightarrow 0,
\end{equation}
where the first representation is the compact induction relatively to
$$\left ( \begin{array}{ccc} 1 & 0 & V_{t-1} \\ 0 & GL_{r} & U \\ 0 & 0 & GL_{t-1} 
\end{array} \right ),$$
and the second one is the induction from
$$\left ( \begin{array}{cc} P_{r} & U \\ 0 & GL_{t} \end{array} \right ).$$
We will particularly use the following case.

\begin{lemma} \label{lem-mirabolique} (cf. \cite{boyer-FT} lemme 4.4)
Let $\pi$ be an irreducible cuspidal representation of $GL_g(K)$. Then as a representation
of $P_{(t+s)g}(K)$, we have isomorphisms
$$\st_t(\pi\{ -\frac{s}{2} \} )_{|P_{tg}(K)} \times \speh_s (\pi \{ \frac{t}{2} \} ) \simeq
LT_{\pi} (t-1,s)_{|P_{(t+s)g}(K)},$$
and
$$\st_t(\pi\{ -\frac{s}{2} \} ) \times \speh_s (\pi \{ \frac{t}{2} \} )_{|P_{sg}(K)} \simeq 
LT_{\pi} (t,s-1)_{|P_{(t+s)g}(K)}.$$
\end{lemma}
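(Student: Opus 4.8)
The plan is to identify, on both sides, the relevant induced representations and to use the short exact sequence \eqref{eq-sec-indP} together with the definitions of $\st_t$, $\speh_s$ and $LT_\pi(t-1,s)$ in Definition \ref{defi-rep}. Recall that $LT_\pi(t-1,s)$ is by construction the unique irreducible subspace of $\st_t(\pi\{-\frac{s}{2}\}) \times \speh_s(\pi\{\frac{t}{2}\})$ (this is the case $r=s$, $t\mapsto t$ of the second bullet of Definition \ref{defi-rep}, after matching the twists). First I would apply \eqref{eq-sec-indP} to $\rho=\st_t(\pi\{-\frac{s}{2}\})$, viewed as a $GL_{tg}(K)$-representation, and $\sigma=\speh_s(\pi\{\frac{t}{2}\})$, viewed as a $GL_{sg}(K)$-representation. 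This gives, as $P_{(t+s)g}(K)$-modules,
$$
0 \to \st_t(\pi\{-\tfrac{s}{2}\}) \times \bigl(\speh_s(\pi\{\tfrac{t}{2}\})_{|P_{sg}(K)}\bigr) \to \bigl(\st_t(\pi\{-\tfrac{s}{2}\}) \times \speh_s(\pi\{\tfrac{t}{2}\})\bigr)_{|P_{(t+s)g}(K)} \to \bigl(\st_t(\pi\{-\tfrac{s}{2}\})_{|P_{tg}(K)}\bigr) \times \speh_s(\pi\{\tfrac{t}{2}\}) \to 0.
$$

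Next I would argue that the restriction $LT_\pi(t-1,s)_{|P_{(t+s)g}(K)}$ is precisely the quotient term on the right, i.e. $\bigl(\st_t(\pi\{-\frac{s}{2}\})_{|P_{tg}(K)}\bigr) \times \speh_s(\pi\{\frac{t}{2}\})$, which would give the first isomorphism of the lemma; and that $LT_\pi(t,s-1)_{|P_{(t+s)g}(K)}$ is the sub term on the left, which would give the second. To see this, one compares derivatives: the highest-derivative analysis (the filtration recalled after Notation \ref{nota-order}, and Proposition \ref{prop-zele}(b),(c)) lets one compute the action of $\Psi^-$ and $\Phi^-$ on each of the three terms. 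Since $\speh_s(\pi')$ is itself a subrepresentation of an induced representation $\st_1(\cdots)\times\cdots$, its lowest nonzero derivative is concentrated in a single piece; likewise for $\st_t(\pi')$. One then checks that the submodule $\st_t \times (\speh_s)_{|P}$ is exactly the part of $\st_t\times\speh_s$ on which the highest derivative is supported in a way matching $LT_\pi(t,s-1)$, while the quotient $(\st_t)_{|P}\times\speh_s$ matches $LT_\pi(t-1,s)$; here one uses that $LT_\pi(t-1,s)$ and $LT_\pi(t,s-1)$ are, up to the mirabolic/$P$-twist, the unique irreducible sub (resp.\ quotient) of the corresponding two-term induced representations in Definition \ref{defi-rep}, and that passing to $P$-restriction is exact and compatible with these inductions via \eqref{eq-sec-indP}.

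The cleanest route is probably to avoid re-deriving everything and instead invoke the known structure of $\st_t(\pi')_{|P}$: by the remark after the definition of $\tau_{nd}$, an irreducible representation of $GL_n(K)$ restricted to $P_n(K)$ is built by iterated $\Phi^+$ and $\Psi^+$ from its highest derivative, and for a generalized Steinberg $\st_t(\pi')$ the highest derivative is $\st_{t-1}(\pi'\{\pm\frac12\})$-type, i.e.\ again generalized Steinberg of one step shorter. Feeding this into \eqref{eq-sec-indP} for $\rho\times\sigma$ and matching with the two-term induced representations of Definition \ref{defi-rep} should produce both isomorphisms essentially formally, because the unique-sub and unique-quotient characterizations pin down the $P$-modules uniquely once we know their highest derivatives and the exact sequence linking them.

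The main obstacle I anticipate is the bookkeeping of the half-integral twists: one must check that the twists $\pi\{-\frac{s}{2}\}$, $\pi\{\frac{t}{2}\}$ appearing in the statement are exactly those forced by the normalization conventions in Notation \ref{nota-ind} and Definition \ref{defi-rep}, and that the $\nu^{-1/2}$ appearing in the definition of the mirabolic product $\tau\times\rho$ (and its $P$-analog) is absorbed correctly. A secondary subtlety is ensuring that $\speh_s(\pi\{\frac{t}{2}\})_{|P_{sg}(K)}$ really is irreducible as a $P$-module, so that the submodule term on the left is irreducible and can be identified with $LT_\pi(t,s-1)_{|P}$ on the nose rather than merely up to semisimplification; this follows from the homogeneity results (Proposition after Definition \ref{defi-rep}, cf.\ \cite{zelevinski2} 6.8) and the fact that an irreducible representation of $GL_n$ restricted to $M_n$ (equivalently $P_n$) has a single nonzero derivative. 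Once these normalizations are nailed down, the rest is a direct application of \eqref{eq-sec-indP} and the uniqueness clauses of Definition \ref{defi-rep}.
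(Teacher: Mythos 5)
Your plan is essentially the paper's own proof: the same short exact sequence (\ref{eq-sec-indP}) applied to $\st_t(\pi\{-\frac{s}{2}\})\times\speh_s(\pi\{\frac{t}{2}\})$, the same computation of derivatives of the quotient term via proposition \ref{prop-zele}, and the same use of homogeneity of the sub term together with the unique-irreducible-subspace characterization of $LT_\pi(t-1,s)$ to see that the composite $LT_{\pi}(t-1,s)_{|P}\hookrightarrow(\st_t\times\speh_s)_{|P}\twoheadrightarrow \st_t{}_{|P}\times\speh_s$ is injective and then, by equality of derivatives, an isomorphism. One small slip in your ``cleanest route'' aside: the highest derivative of $\st_t(\pi')$ is the $(tg)$-th (it is non degenerate), not a $\st_{t-1}$-type one; what the argument actually uses is the full list $(\st_t(\pi))^{(\delta g)}\simeq\st_{t-\delta}(\pi\{\frac{\delta}{2}\})$ for $0\leq\delta\leq t$.
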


\begin{nota} \label{nota-Mc}
For $c \in K^d$, we will denote by $M_c(K)$
the mirabolic subgroup stabilizing $c$.
\end{nota}

\rem with this notation $P_d(K)$ is $M_c(K)$ for $c=(1,0,\cdots,0)$.

\subsection{Some lattices of Steinberg representations}
\label{para-RI}

Let $\pi$ be an irreducible cuspidal $\overline \Qm_l$-representation of $GL_g(K)$, supposed to be entire.
As its reduction modulo $l$, denoted by $\varrho$, is still irreducible, up to isomorphism, it has a unique stable
lattice, cf. \cite{bellaiche-ribet} proposition 3.3.2 and its following remark.

\begin{defin} \label{defi-RI} (cf. \cite{boyer-repmodl})
Given a stable lattice of $\st_t(\pi)$, the surjection (resp. the embedding) 
$$\st_t(\pi) \times \pi \{ t \} \twoheadrightarrow \st_{t+1}(\pi), \quad resp.~ \st_{t+1}(\pi) \hookrightarrow
\st_t(\pi\{ 1 \}) \times \pi$$
gives a stable lattice of $\st_{t+1}(\pi)$ so that inductively starting from $t=1$, we construct a lattice denoted by
$RI_{\bar \Zm_l,-}(\pi,t)$ (resp. $RI_{\bar \Zm_l,+}(\pi,t)$). We then denote by
$$RI_{\bar \Fm_l,-}(\pi,t):= RI_{\bar \Zm_l,-}(\pi,t) \otimes_{\bar \Zm_l} \bar \Fm_l, \quad
resp.~ RI_{\bar \Fm_l,+}(\pi,t):= RI_{\bar \Zm_l,+}(\pi,t) \otimes_{\bar \Zm_l} \bar \Fm_l.$$
\end{defin}

\begin{propo} \label{prop-defi-Vk} (cf. \cite{boyer-repmodl} propositions
3.2.2 and 3.2.7)
For every $0 \leq k \leq \lg_\varrho(s)$, there exists a unique length $k$ sub-representation $V_{\varrho,\pm}(s;k)$ 
of $RI_{\bar \Fm_l,\pm}(\pi,s)$
$$(0)=V_{\varrho,\pm}(s;0) \varsubsetneq V_{\varrho,\pm}(s;1) \varsubsetneq \cdots \varsubsetneq
V_{\varrho,\pm}(s;\lg_\varrho(s))= RI_{\bar \Fm_l,\pm}(\pi,s),$$
such that the image of $V_{\varrho,-}(s;k)$ (resp. $V_{\varrho,+}(s;k)$) in the Grothendieck group verifies the
following property: all its irreducible constituents are strictly greater (resp. smaller) than any irreducible constituent of
$$W_{\varrho,-}(s;k):= V_{\varrho,-}(s;\lg_\varrho(s))/V_{\varrho,-}(s;k)$$ 
(resp. $W_{\varrho,+}(s;k):=
V_{\varrho,+}(s;\lg_\varrho(s))/V_{\varrho,+}(s;k)$), relatively to the relation of order of \ref{nota-order}.
\end{propo}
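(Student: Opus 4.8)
The plan is to show that $RI_{\overline\Fm_l,\pm}(\pi,s)$ is \emph{uniserial}, with its composition factors --- the $\st_{\underline i}(\varrho)$ for $\underline i\in\IC_\varrho(s)$, which are pairwise non-isomorphic (cf. Theorem \ref{theo-ss-quotient} and \cite{vigneras-induced} V.7) --- arranged, from the socle to the cosocle, according to the order of Notation \ref{nota-order}: increasingly in the $-$ case, decreasingly in the $+$ case. Granting this, the proposition is formal. Since the constituents are pairwise distinct, any subrepresentation is determined by its set of Jordan--H\"older factors, so a uniserial module of length $\lg_\varrho(s)$ has exactly one subrepresentation of each length $k$, namely the $k$-th term $V_{\varrho,\pm}(s;k)$ of the unique composition series; and the assertion about the constituents of $V_{\varrho,\pm}(s;k)$ versus those of $W_{\varrho,\pm}(s;k)$ is precisely the statement about the order.

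First I would set up an induction on $s$. The case $s=1$ is trivial: $\st_1(\pi)=\pi$ has irreducible reduction $\varrho$ by Proposition \ref{prop-red-modl} and $\lg_\varrho(1)=1$. For the inductive step I treat the $-$ case, the $+$ case being dual (via contragredients, or by running the same argument with the embedding $\st_s(\pi)\hookrightarrow\st_{s-1}(\pi\{1\})\times\pi$ of Definition \ref{defi-RI} and replacing every ``cosocle'' by ``socle''). By Definition \ref{defi-RI}, $RI_{\overline\Zm_l,-}(\pi,s)$ is the image of $RI_{\overline\Zm_l,-}(\pi,s-1)\otimes L$, where $L$ is the unique stable lattice of $\pi\{s-1\}$, under the surjection $\st_{s-1}(\pi)\times\pi\{s-1\}\twoheadrightarrow\st_s(\pi)$; reducing modulo $l$ and using right-exactness of $-\otimes_{\overline\Zm_l}\overline\Fm_l$ gives a surjection
$$RI_{\overline\Fm_l,-}(\pi,s-1)\times\varrho\{s-1\}\twoheadrightarrow RI_{\overline\Fm_l,-}(\pi,s).$$
By induction the source has a completely understood uniserial $RI_{\overline\Fm_l,-}(\pi,s-1)$-factor; using the short exact sequences of Proposition \ref{prop-zele}(a) together with induction in stages for parabolic induction, I would compute the Jordan--H\"older factors of each $\st_{\underline i}(\varrho)\times\varrho\{s-1\}$, record which such products share a constituent, and so describe the lattice of submodules of $RI_{\overline\Fm_l,-}(\pi,s-1)\times\varrho\{s-1\}$. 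Since over $\overline\Qm_l$ the representation $\st_s(\pi)$ is the cosocle of $\st_{s-1}(\pi)\times\pi\{s-1\}$, the cosocle of $RI_{\overline\Fm_l,-}(\pi,s)$ is a single constituent, necessarily $\st_{\underline{s_{\max}}}(\varrho)$ (maximal for the order of \ref{nota-order}); iterating this observation on the successive quotients pins down the order of the factors and forces the image to be uniserial.

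The main obstacle is exactly this passage to the lattice: a parabolic induction of a uniserial $\overline\Fm_l$-module is \emph{not} uniserial in general, so the real content of the step above is that taking the image inside $\st_s(\pi)\otimes_{\overline\Zm_l}\overline\Fm_l$ annihilates precisely those submodules of $RI_{\overline\Fm_l,-}(\pi,s-1)\times\varrho\{s-1\}$ which would otherwise break the chain condition. Concretely one needs the relevant length-two subquotients to be non-split, i.e. control of the $\Ext^1$ groups between the various $\st_{\underline i}(\varrho)$. I would obtain this by applying the exact derivative functors $\Psi^-$ and $\Phi^-$ of \cite{zelevinski1} (equivalently, Jacquet functors along suitable maximal parabolics), which detect these extensions and reduce the verification to strictly smaller $s$, where the inductive hypothesis applies; the residual combinatorics --- translating ``common constituent'' and ``non-split extension'' into the order on $\IC_\varrho(s)$ via Definition \ref{defi-spm} --- is then routine, if lengthy, bookkeeping, and is where the precise form of \cite{boyer-repmodl} propositions 3.2.2 and 3.2.7 is recovered.
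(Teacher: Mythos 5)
The paper does not actually prove this proposition: it is quoted from propositions 3.2.2 and 3.2.7 of \cite{boyer-repmodl}, so there is no internal proof to compare against. Judged on its own terms, your proposal has two serious problems.

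First, and most concretely, you have the direction of the order backwards. You claim the factors of $RI_{\bar \Fm_l,-}(\pi,s)$ increase from socle to cosocle, with cosocle $\st_{\underline{s_{\max}}}(\varrho)$. But the $-$ case of the statement requires the constituents of the \emph{sub}representation $V_{\varrho,-}(s;k)$ to be strictly \emph{greater} than those of the quotient $W_{\varrho,-}(s;k)$: the maximal (non-degenerate) constituent must sit at the bottom. This is exactly what Corollary \ref{coro-RI-nd} and Proposition \ref{prop-RI-M} record: the unique irreducible $P_{sg}(K)$-subspace of the reduction of $RI_{\overline \Zm_l,-}(\pi,s)$ is the non-degenerate $\tau_{nd}$, which lives in $\st_{\underline{s_{\max}}}(\varrho)$; a degenerate socle constituent $X$ would give a degenerate irreducible $P$-subspace inside $X_{|P}$. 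So ``granting this, the proposition is formal'' in fact yields the proposition with $-$ and $+$ interchanged. The error comes from your cosocle argument, which is itself invalid: the fact that $\st_s(\pi)$ is the unique irreducible quotient of $\st_{s-1}(\pi)\times\pi\{s-1\}$ over $\overline \Qm_l$ says nothing direct about the cosocle of the reduction of the image lattice; that cosocle is a quotient of the cosocle of $r_l(RI_{-}(\pi,s-1))\times\varrho\{s-1\}$, which you have not computed and which need not be irreducible.

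Second, you reduce everything to uniseriality of $RI_{\bar \Fm_l,\pm}(\pi,s)$, which is strictly stronger than the statement: since the constituents are multiplicity-free, the proposition only asserts, for each $k$, the existence (and automatic uniqueness) of a subrepresentation whose Jordan--H\"older content is an initial segment for the total order of \ref{nota-order} --- a condition perfectly compatible with a non-uniserial submodule lattice. You give no argument for uniseriality, nor for the correct ordering of the factors: the decisive steps --- describing the submodule lattice of $RI_{\bar \Fm_l,-}(\pi,s-1)\times\varrho\{s-1\}$, proving the relevant length-two subquotients are non-split, and showing that passing to the image inside $r_l(\st_s(\pi))$ kills exactly the offending submodules --- are deferred to ``routine bookkeeping'' and to unspecified $\Ext^1$ computations via $\Psi^-$ and $\Phi^-$ (exact functors induce maps on extension groups, but do not ``detect'' non-splitness without a further argument that the relevant class is not killed). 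The inductive skeleton (Definition \ref{defi-RI} together with Proposition \ref{prop-zele}) is the right starting point, but as written the proposal contains no proof of the two points that carry all the content.
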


%
%

\begin{corol} \label{coro-RI-nd}
If $\lg_\varrho(s) \geq 2$, then we have two irreducible subspaces of
$RI_{\overline \Zm_l,+}(\pi,s)_{|P_{sg}(K)} \otimes_{\overline \Zm_l} \overline \Fm_l$ 
which are 
\begin{itemize}
\item first some irreducible $P_{sg}(K)$-subspaces of $\st_{\underline s}(\varrho)$ which
is necessarily degenerate,

\item and the non-degenerate irreducible $P_{sg}(K)$-representation, $\tau_{nd}$ 
which is a subspace of $\st_{\underline{s_{\max}}}(\varrho)_{|P_{sg}(K)}$.
\end{itemize}
\end{corol}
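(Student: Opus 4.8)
The plan is to exhibit the two asserted irreducible $P_{sg}(K)$-subspaces of $RI_{\overline\Zm_l,+}(\pi,s)\otimes_{\overline\Zm_l}\overline\Fm_l$ by carefully tracking the construction of the lattice $RI_{\overline\Zm_l,+}(\pi,s)$ through the restriction to the mirabolic group. Recall that $RI_{\overline\Zm_l,+}(\pi,s)$ is defined by iterating the embedding $\st_{t+1}(\pi)\hookrightarrow \st_t(\pi\{1\})\times\pi$; in particular it comes with an embedding $RI_{\overline\Zm_l,+}(\pi,s)\hookrightarrow \st_{s-1}(\pi\{1\})\times\pi$ over $\overline\Zm_l$. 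Applying Lemma~\ref{lem-mirabolique} (in the appropriate normalization/twist) gives, over $\overline\Qm_l$, an isomorphism of $P_{sg}(K)$-representations identifying $\st_{s-1}(\pi)\times\speh_1(\pi)_{|P_g(K)}$-type objects with $LT_\pi$-restrictions; more to the point, I would use the short exact sequence \eqref{eq-sec-indP} applied to $\rho=\st_{s-1}(\pi\{1\})$ and $\sigma=\pi$ to get
\begin{equation*}
0\to \st_{s-1}(\pi\{1\})\times(\pi_{|P_g(K)})\to(\st_{s-1}(\pi\{1\})\times\pi)_{|P_{sg}(K)}\to(\st_{s-1}(\pi\{1\})_{|P_{(s-1)g}(K)})\times\pi\to 0.
\end{equation*}

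First I would establish the non degenerate subspace. Over $\overline\Fm_l$, by Theorem~\ref{theo-ss-quotient} the reduction $r_l(\st_s(\pi))$ contains, with multiplicity one, the non degenerate constituent $\st_{\underline{s_{\max}}}(\varrho)$, and by Proposition~\ref{prop-defi-Vk} applied with $k=1$ (here $\lg_\varrho(s)\geq 2$ so this is a proper step) there is a length-one subrepresentation $V_{\varrho,+}(s;1)$ of $RI_{\overline\Fm_l,+}(\pi,s)$ all of whose constituents are strictly smaller, for the order of \ref{nota-order}, than everything in the quotient $W_{\varrho,+}(s;1)$; dually, the maximal constituent $\st_{\underline{s_{\max}}}(\varrho)$ sits at the \emph{top}, i.e. it is a quotient, but passing to the mirabolic restriction and using that $\st_{\underline{s_{\max}}}(\varrho)_{|P_{sg}(K)}\simeq\tau_{nd}$ is irreducible (remark after the level-of-non-degeneracy notation: a representation with derivative of order $sg$ restricts to the unique $\tau_{nd}$), one gets $\tau_{nd}$ as an actual $P_{sg}(K)$-subspace of the lattice. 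Concretely I would argue that the non degenerate vectors (those on which the Whittaker functional is nonzero) span a $P_{sg}(K)$-stable line-of-type $\tau_{nd}$ inside $RI_{\overline\Fm_l,+}(\pi,s)_{|P_{sg}(K)}$, because $\Phi^-$ kills $\tau_{nd}$ and the filtration of Proposition~\ref{prop-zele}/\ref{eq-sec-indP} forces the top-derivative part to be a subobject after restriction to $P$ in this $+$-lattice setup.

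Second I would produce the degenerate subspace. The lattice $RI_{\overline\Zm_l,+}(\pi,s)$ by its very construction surjects onto $\st_s(\pi)$ over $\overline\Qm_l$, hence its mod-$l$ reduction surjects onto something containing $\st(\underline s)(\varrho)=\st_{\underline s}(\varrho)$ in the notation of Definition~\ref{defi-nd}, and in fact the $+$ convention arranges that $\st_{\underline s}(\varrho)$, being the \emph{smallest} (most degenerate, derivative order $(s+1)g$ in the relevant embedding picture) constituent, appears as a genuine subrepresentation $V_{\varrho,+}(s;1)=\st_{\underline s}(\varrho)$ when $\lg_\varrho(s)\geq 2$ guarantees this is not the whole thing. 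Restricting to $P_{sg}(K)$, Proposition~\ref{prop-zele}(a) (in its $P$-version \eqref{eq-sec-indP}) shows $\st_{\underline s}(\varrho)_{|P_{sg}(K)}$ is nonzero and its irreducible $P_{sg}(K)$-subspaces are degenerate since $\st_{\underline s}(\varrho)$ itself is a degenerate $GL_{sg}(K)$-representation (its level of non-degeneracy is $<sg$, as it is not the $\st_{\underline{s_{\max}}}$ one), and by homogeneity of the restriction (Proposition after Definition of homogeneous) all those subspaces have the same, sub-maximal, level — hence are degenerate. Extracting one irreducible $P_{sg}(K)$-subspace inside $\st_{\underline s}(\varrho)_{|P_{sg}(K)}$ finishes this half.

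The main obstacle I anticipate is the bookkeeping of the two opposite conventions simultaneously: the $+$-lattice is built from the embeddings $\st_{t+1}\hookrightarrow\st_t\{1\}\times\pi$, which makes the \emph{most degenerate} $\overline\Fm_l$-constituent a submodule at the $GL$ level (that is the content of Proposition~\ref{prop-defi-Vk} for the $+$ side with $V_{\varrho,+}(s;1)$), whereas getting $\tau_{nd}$ as a $P_{sg}(K)$-\emph{subspace} goes the other way and relies on the fact that, for the functor $\Phi^-$ annihilating the top-derivative piece, the natural filtration of Proposition~\ref{prop-zele} places the $\Psi^+\Psi^-$-part as a quotient of a $GL$-representation but as a sub after a suitable reindexing — so I must check that restricting the $+$-lattice to $P$ and then looking at the socle does land both $\tau_{nd}$ and a degenerate piece as subspaces, not one as a sub and the other only as a subquotient. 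I expect this to come down to the exactness of $\Psi^\pm,\Phi^\pm$ together with Proposition~\ref{prop-zele}(d) (which guarantees $\Phi^-$ of any nonzero $P$-submodule of a $\tau\times\rho$ is nonzero, forcing genuinely degenerate submodules to exist alongside $\tau_{nd}$), and once that structural point is pinned down the rest is a direct application of Proposition~\ref{prop-defi-Vk} and Theorem~\ref{theo-ss-quotient}.
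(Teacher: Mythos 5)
Your handling of the first bullet is essentially correct: Proposition \ref{prop-defi-Vk} with $k=1$ exhibits the minimal constituent $\st_{\underline s}(\varrho)$ as an irreducible $GL_{sg}(K)$-subrepresentation of $RI_{\overline \Fm_l,+}(\pi,s)$, the hypothesis $\lg_\varrho(s)\geq 2$ guarantees it is not the non degenerate constituent $\st_{\underline{s_{\max}}}(\varrho)$, and any irreducible $P_{sg}(K)$-subspace of its restriction is then degenerate. One caveat: do not invoke Zelevinsky's homogeneity theorem for this last step, since it is a characteristic-zero statement and its failure mod $l$ is the whole point of the paper; all you need is the exactness of $\Psi^-,\Phi^-$, which forces $\lambda(\sigma)\leq\lambda(\st_{\underline s}(\varrho))<sg$ for every subquotient $\sigma$.

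The genuine gap is in the second bullet: you never actually construct the embedding $\tau_{nd}\hookrightarrow RI_{\overline \Fm_l,+}(\pi,s)_{|P_{sg}(K)}$, and the mechanisms you sketch are wrong. The assertion $\st_{\underline{s_{\max}}}(\varrho)_{|P_{sg}(K)}\simeq\tau_{nd}$ is false ($\tau_{nd}$ is only the bottom piece of that restriction; equality of the whole restriction with $\tau_{nd}$ holds only for cuspidal representations), the claim that ``$\Phi^-$ kills $\tau_{nd}$'' is also false (it is $\Psi^-$ that annihilates $\tau_{nd}$, while $\Phi^-\tau_{nd}$ is the analogous object one size down), and ``the maximal constituent is a quotient at the $GL$ level, hence $\tau_{nd}$ is a sub at the $P$ level'' is a non sequitur. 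The missing idea is the canonical filtration $0\subset\tau_{sg}\subset\cdots\subset\tau_1=\overline\Lambda_{|P_{sg}(K)}$ recalled after the definition of the derivatives (from \cite{zelevinski1} 3.5), applied to $\overline\Lambda:=RI_{\overline\Zm_l,+}(\pi,s)\otimes_{\overline\Zm_l}\overline\Fm_l$: its bottom term $\tau_{sg}=(\Phi^+)^{sg-1}(\Phi^-)^{sg-1}(\overline\Lambda_{|P})\simeq(\Phi^+)^{sg-1}\Psi^+\bigl(\overline\Lambda^{(sg)}\bigr)$ is always a \emph{sub}object. Since the derivative functors are exact and commute with reduction modulo $l$, and since $RI_{\overline\Zm_l,+}(\pi,s)^{(sg)}$ is a nonzero $\overline\Zm_l$-module spanning the Whittaker line $\st_s(\pi)^{(sg)}$, one gets $\overline\Lambda^{(sg)}\neq(0)$, so $\tau_{sg}$ is a nonzero sum of copies of $\tau_{nd}=(\Phi^+)^{sg-1}\Psi^+(\mathbf{1})$, which is irreducible mod $l$; equivalently $\hom_{P_{sg}(K)}(\tau_{nd},\overline\Lambda_{|P})\simeq\overline\Lambda^{(sg)}\neq 0$ by adjunction. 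Note that this argument is valid for \emph{every} stable lattice of $\st_s(\pi)$, so the content specific to $RI_{\overline\Zm_l,+}$ lies entirely in the first bullet.
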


\begin{propo} \label{prop-RI-M}
The only irreducible subspace of the modulo $l$ reduction of
 $RI_{\overline \Zm_l,-}(\pi,s)_{|P_{sg}(K)}$ 
is the non-degenerate one $\tau_{nd}$.
\end{propo}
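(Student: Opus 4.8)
The plan is to treat separately the two assertions contained in the statement: that $\tau_{nd}$ does occur as an irreducible sub-$P_{sg}(K)$-module, and that it is the only one.

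\medskip
\emph{Occurrence of $\tau_{nd}$.} Recall that the Bernstein--Zelevinski filtration $0\subset\tau_{sg}\subset\cdots\subset\tau_1=\tau$ of $\tau\in\alg(P_{sg}(K))$ has bottom term $\tau_{sg}=(\Phi^+)^{sg-1}(\Phi^-)^{sg-1}(\tau)\simeq\tau_{nd}\otimes\tau^{(sg)}$, which is a subrepresentation. The functors $\Psi^\pm,\Phi^\pm$, hence the derivatives $(-)^{(k)}$, are exact and are defined over $\overline\Zm_l$, so they commute with reduction modulo $l$. Applying this to the stable lattice $RI_{\overline\Zm_l,-}(\pi,s)$ inside the irreducible generic representation $\st_s(\pi)$ — whose highest derivative $\st_s(\pi)^{(sg)}$ is one dimensional — we get that $\bigl(RI_{\overline\Fm_l,-}(\pi,s)\bigr)^{(sg)}$ is one dimensional; therefore $\tau_{nd}\simeq\tau_{sg}\hookrightarrow RI_{\overline\Fm_l,-}(\pi,s)_{|P_{sg}(K)}$, and it occurs with multiplicity one.

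\medskip
\emph{Reduction of the uniqueness.} By Proposition \ref{prop-defi-Vk} the unique length one $GL_{sg}(K)$-subrepresentation of $RI_{\overline\Fm_l,-}(\pi,s)$ is $V_{\varrho,-}(s;1)=\st_{\underline{s_{\max}}}(\varrho)$, which is irreducible \emph{non degenerate}; thus the $GL_{sg}(K)$-socle of $RI_{\overline\Fm_l,-}(\pi,s)$ is this single generic constituent. Since the derivatives of $\st_s(\pi)$ — hence, by exactness and integrality, of $RI_{\overline\Fm_l,-}(\pi,s)$ — vanish outside degrees divisible by $g$, the Bernstein--Zelevinski filtration of $\tau:=RI_{\overline\Fm_l,-}(\pi,s)_{|P_{sg}(K)}$ reduces to $\tau_{sg}\subseteq\tau_{(s-1)g}\subseteq\cdots\subseteq\tau_g=\tau$ with $\tau_{jg}/\tau_{(j+1)g}\simeq(\Phi^+)^{jg-1}\Psi^+\bigl(RI_{\overline\Fm_l,-}(\pi,s)^{(jg)}\bigr)$, a representation all of whose irreducible subquotients have non degeneracy level $jg$. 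Consequently an irreducible sub of $\tau$ that is not contained in $\tau_{sg}=\tau_{nd}$ has level $jg<sg$ for some $1\le j\le s-1$, and it suffices to prove $\mathrm{soc}_{P_{sg}(K)}(\tau_{jg})=\tau_{nd}$ for each such $j$. As $\tau_{jg}=(\Phi^+)^{jg-1}\bigl((\Phi^-)^{jg-1}\tau\bigr)$ and $\Phi^+$ is exact, fully faithful and socle-preserving, this is equivalent to: the nonzero representation $(\Phi^-)^{jg-1}\bigl(RI_{\overline\Fm_l,-}(\pi,s)_{|P_{sg}(K)}\bigr)$ of $P_{(s-j)g+1}(K)$ (nonzero because its $\Psi^-$ is the nonzero $RI_{\overline\Fm_l,-}(\pi,s)^{(jg)}$) has simple socle equal to the non degenerate irreducible representation of $P_{(s-j)g+1}(K)$.

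\medskip
\emph{The induction.} We prove the socle assertion of the previous paragraph by induction on $s$. For $s=1$ it is empty, and one just notes that $RI_{\overline\Fm_l,-}(\pi,1)=\varrho$ is irreducible cuspidal over $\overline\Fm_l$ (Proposition \ref{prop-red-modl}), hence $\varrho_{|P_g(K)}\simeq\tau_{nd}$. For $s\ge 2$, Definition \ref{defi-RI} presents $RI_{\overline\Fm_l,-}(\pi,s)$ as a quotient of $RI_{\overline\Fm_l,-}(\pi,s-1)\times\varrho\{s-1\}$, with kernel $N$ the mod $l$ reduction of a lattice in the other Jordan--H\"older constituent of $\st_{s-1}(\pi)\times\pi\{s-1\}$, namely (a twist of) $LT_\pi(s-2,1)$. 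Restricting to $P_{sg}(K)$ by means of \eqref{eq-sec-indP}, and computing the effect of the iterated functor $(\Phi^-)^{jg-1}$ via Proposition \ref{prop-zele}(b),(c), together with the explicit descriptions of $LT_\pi(s-2,1)_{|P_{sg}(K)}$ and $\st_{s-1}(\pi)_{|P_{(s-1)g}(K)}$ furnished by Lemma \ref{lem-mirabolique}, one expresses the relevant socle through $(\Phi^-)^{(\bullet)}$-iterates of $RI_{\overline\Fm_l,-}(\pi\{*\},s-1)_{|P_{(s-1)g}(K)}$ and of the restriction of $\varrho\{s-1\}$; the induction hypothesis then forces every irreducible submodule not absorbed by $N$ to be non degenerate, while Proposition \ref{prop-zele}(d), iterated, rules out spurious submodules coming from the factor $(-)\times\varrho\{s-1\}$. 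Combined with the occurrence statement, this proves the proposition.

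\medskip
\emph{The main difficulty} is precisely this last step: tracking, through the surjection of Definition \ref{defi-RI} and the exact sequence \eqref{eq-sec-indP}, the exact position of the kernel $N$ relative to the two pieces of the Bernstein--Zelevinski-type decomposition of $\bigl(RI_{\overline\Fm_l,-}(\pi,s-1)\times\varrho\{s-1\}\bigr)_{|P_{sg}(K)}$, so as to see that passing to the quotient by $N$ deletes exactly the degenerate socle constituents that \emph{are} present for the ``$+$''-lattice (Corollary \ref{coro-RI-nd}), leaving a single copy of $\tau_{nd}$. This is where the asymmetry between $RI_{\overline\Fm_l,-}$ and $RI_{\overline\Fm_l,+}$ originates, and where the fine structure of these lattices from \cite{boyer-repmodl} (Proposition \ref{prop-defi-Vk}) is used in full.
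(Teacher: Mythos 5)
Your proposal assembles the right ingredients --- induction on $s$, the presentation of $RI_{\overline \Zm_l,-}(\pi,s)$ as a quotient of $RI_{\overline \Zm_l,-}(\pi,s-1)\times \pi\{\cdot\}$ with kernel $N$ a lattice in (a twist of) $LT_\pi(s-2,1)$, the short exact sequence (\ref{eq-sec-indP}), and proposition \ref{prop-zele} --- but the step you explicitly set aside as \emph{the main difficulty} is not a technicality to be deferred: it is the whole content of the proof, and you do not carry it out. The decisive point is that after restriction to $P_{sg}(K)$ the defining surjection identifies the quotient lattice with the \emph{sub}-term of (\ref{eq-sec-indP}), i.e.
$$RI_{\overline \Zm_l,-}(\pi,s)_{|P_{sg}(K)} \simeq RI_{\overline \Zm_l,-}\bigl(\pi\{ \tfrac{-1}{2} \},s-1\bigr) \times \bigl(\pi\{ \tfrac{s-2}{2} \}\bigr)_{|P_{sg}(K)},$$
equivalently $N_{|P_{sg}(K)}$ maps isomorphically onto the quotient term $\bigl(RI_{\overline \Zm_l,-}(\cdot,s-1)\bigr)_{|P_{(s-1)g}(K)}\times \pi\{\cdot\}$. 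This is exactly the lattice incarnation of lemma \ref{lem-mirabolique} in the degenerate case $\speh_1=\pi$ (where $LT_\pi(s-1,0)=\st_s(\pi)$ is the sub-side and $LT_\pi(s-2,1)$ the quotient-side), and it is what the paper means by \enquote{from the previous section}. Without it, nothing can be said about the socle of the quotient by $N$ --- passing to a quotient can create new irreducible subspaces, as corollary \ref{coro-RI-nd} shows happens for the $+$-lattice --- so your induction does not close; your final paragraph describes the statement that would make it close rather than proving it.

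Once this identity is in hand the rest collapses: an irreducible $P_{sg}(K)$-subspace of $\rho\times\tau$ with $\tau=\pi_{|P_g(K)}$ has nonvanishing iterated $\Phi^-$ by proposition \ref{prop-zele}(d), and parts (b),(c) transport the socle computation onto $RI_{\overline \Zm_l,-}(\pi\{-\tfrac12\},s-1)_{|P_{(s-1)g}(K)}$, where the induction hypothesis applies. In particular your preliminary sections --- the multiplicity-one occurrence of $\tau_{nd}$ via the highest derivative, and the determination of the $GL_{sg}(K)$-socle via proposition \ref{prop-defi-Vk} --- are correct but become superfluous; the fine structure of the $V_{\varrho,-}(s;k)$ is not needed for this proposition. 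I would encourage you to prove the displayed isomorphism directly (injectivity of the composite sub-term $\hookrightarrow (\rho\times\sigma)_{|P_{sg}}\twoheadrightarrow RI_{\overline \Zm_l,-}(\pi,s)_{|P_{sg}}$ follows from lemma \ref{lem-mirabolique} after inverting $l$ together with torsion-freeness, and one must also check the cokernel vanishes integrally); that single statement replaces your entire third and fourth paragraphs.
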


\begin{proof}
From the previous section, we have
$$RI_{\overline \Zm_l,-}(\pi,s)_{|P_{sg}(K)} \simeq RI_{\overline \Zm_l,-}
(\pi\{ \frac{-1}{2} \},s-1) \times (\pi\{ \frac{s-2}{2} \})_{|P_{sg}(K)},$$
so that the result follows by induction using proposition \ref{prop-zele}.
\end{proof}

\section{Review on the geometric objects}

\subsection{Lubin-Tate spaces}

Let $\OC_K$ be the ring of integers of $K$, $\PC_K$ its maximal ideal, $\varpi_K$ a
uniformizer and $\kappa=\OC_K/\PC_K$ the residue field of cardinal $q=p^f$. 
Let $K^{nr}$ be the maximal unramified extension of $K$ and $\hat K^{nr}$ its
completion with ring of integers $\OC_{\hat K^{nr}}$. Let $\Sigma_{K,d}$ be 
the one-dimensional $\OC_K$-formal module of Barsotti-Tate over $\overline \Fm_p$ 
with height $d$,
cf. \cite{h-t} \S II. We consider the category $C$ of artinian local 
$\OC_{\hat K^{nr}}$-algebras
with residue field $\overline \k$.

\begin{defin} The functor $\MC_{LT,d,n}$ which associates to an object $R$ of $C$, 
the set of isomorphism classes of deformations by quasi-isogenies over $R$ of 
$\Sigma_{K,d}$,
equipped with a $n$-level structure, is a disjoint union of sub-functors
$\MC_{LT,d,n}^{(h)}$ of deformations by a quasi-isogeny of height $h$ which is
representable by a formal scheme
$\widehat{\MC}_{LT,d,n}^{(h)}$ where
$\widehat{\MC}_{LT,d,n}^{(h)}$.
\end{defin}

\noindent \textit{Remark}: each of the $\widehat{\MC}_{LT,d,n}^{(h)}$ is non canonically
isomorphic to the formal scheme $\widehat{\MC}_{LT,d,n}^{(0)}$ denoted by
$\spf \Def_{d,n}$ in \cite{boyer-invent2}. We will use the notations without hat for
the Berkovich generic fibers which are $\widehat{K^{nr}}$-analytic spaces in the sense
of \cite{berk0} and we note
$\MC_{LT,n}^{d/K}:=\MC_{LT,d,n} \hat \otimes_{\hat K^{nr}} \hat{\overline K}.$

The group of quasi-isogenies of $\Sigma_{K,d}$ is isomorphic to the unit group 
$D_{K,d}^\times$ of the central division algebra over $K$ with invariant $1/d$, which then
acts on $\MC_{LT,n}^{d/K}$. For all $n \geq 1$, we have a natural action of 
$GL_d(\OC_K/\PC_K^n)$ on the level structures and then on $\MC_{LT,n}^{d/K}$.
This action can be extended to $GL_d(K)$ on the projective limit 
${\displaystyle \lim_{\leftarrow ~n}} \MC_{LT,n}^{d/K}$ which is then equipped with 
the action of $GL_d(K) \times D_{K,d}^\times$ which factorises by
$\Bigl ( GL_d(K) \times D_{K,d}^\times \Bigr ) / K^\times$ where $K^\times$ 
is embedded diagonally. 

\begin{defin} 
Let $\Psi_{K,\Lambda,d,n}^{i}\simeq H^i(\MC_{LT,d,n}^{(0)} 
\hat \otimes_{\hat K^{nr}} \hat{\overline K}, \Lambda)$, be the $\Lambda$-module of finite type
associated, by the vanishing cycle theory of Berkovich, to the structural morphism
$\widehat{\MC_{LT,d,n}^{(0)}} \longto \spf \hat \OC_K^{nr}.$ 
\end{defin}

We also introduce $\UC_{K,\Lambda,d,n}^i := H^i(\MC_{LT,n}^{d/K},\Lambda)$ and
$\UC_{K,\Lambda,d}^{i}= {\DS \lim_{\atop{\longto}{n}}} ~ \UC_{K,\Lambda,d,n}^{i}$
as well as the cohomology groups with compact supports
$$\VC_{K,\Lambda,d,n}^i := H_c^i(\MC_{LT,n}^{d/K},\Lambda), \hbox{ and }
\VC_{K,\Lambda,d}^{i}= \lim_{\atop{\longto}{n}} ~ \VC_{K,\Lambda,d,n}^{i}.$$
As $\KF_{n}:=\ker (GL_d(\OC_K) \longto GL_d(\OC_K/\PC_K^n))$ is pro-$p$ 
for all $n \geq 1$, then we have
$\UC_{K,\Lambda,d,n}^{i}=(\UC_{K,\Lambda,d}^{i})^{\KF_{n}}$ and
$\VC_{K,\Lambda,d,n}^{i}=(\VC_{K,\Lambda,d}^{i})^{\KF_{n}}$.

The description of the $\UC_{K,\overline \Qm_l,d}^i$ is given in \cite{boyer-invent2} 
theorem 2.3.5. We will denote by $\UC_{K,\overline \Zm_l,d,free}^i$ (resp. 
$\VC_{K,\overline \Zm_l,d,free}^i$) the free quotient
which is the whole of $\UC_{K,\overline \Zm_l,d}^i$ (resp. $\VC_{K,\overline \Zm_l,d}^i$)
by the main result of \cite{boyer-duke}.

\subsection{KHT-Shimura varieties}
\label{para-KHT}

Let $F=F^+E$ be a CM field with $E/\Qm$ quadratic imaginary.
For $B/F$ a central division algebra with dimension $d^2$ equipped with an 
involution of
second kind $*$ and $\beta \in B^{*=-1}$, consider the similitude group $G/\Qm$ 
defined for any $\Qm$-algebra $R$ by
$$G(R):= \{ (\lambda,g) \in R^\times \times (B^{op} \otimes_\Qm R)^\times \hbox{ such that }
gg^{\sharp_\beta}=\lambda \}$$
with $B^{op}=B \otimes_{F,c} F$ where $c=*_{|F}$ is the complex conjugation and
$\sharp_\beta$ the involution
$x \mapsto x^{\sharp_\beta}=\beta x^* \beta^{-1}$. 
For $p=uu^c$ decomposed in $E$, we have
$$G(\Qm_p) \simeq \Qm_p^\times \times \prod_{w | u} (B^{op}_w)^\times$$
where $w$ describes the places of $F$ above $u$. We suppose as in \cite{h-t} that
\begin{itemize}
\item the associated unitary group $G_0(\Rm)$ has signatures $(1,d-1) \times (0,d) \times \cdots \times (0,d)$;

\item for any place $x$ of $\Qm$ inert or ramified in $E$, then $G(\Qm_x)$ 
is quasi-split.

\item We moreover suppose that $u$ is chosen so that there exists a fixed place
$v | u$ with $B_v \simeq M_d(F_v)$.
\end{itemize}

\begin{notas} \label{nota-spl}
\begin{itemize}
\item Denote by $\Am$ the adele ring of $\Qm$. For a finite set
$S$ of places of $\Qm$, we then introduce $\Am^S$ the adele ring of $\Qm$ outside $S$.

\item The set of places $p$ of $\Qm$ decomposed in $E$ is denoted $\spl$ and we also
introduce $\spl^S$ the subset of places of $\spl$ which does not belong to $S$.
\end{itemize}
\end{notas}

For all open compact subgroups $U^p$ of $G(\Am^{\oo,p})$ and 
$m=(m_w)_{w|u}$ a collection of non-negative integers, we consider
$$U^p(m)=U^p \times \Zm_p^\times \times \prod_{w | u}
\ker ( \OC_{B_{w}}^\times 
\longrightarrow (\OC_{B_{w}}/\PC_{w}^{m_w})^\times ),$$
where $\OC_{B_{w}}$ is the maximal order of $B_{w}$.

We then denote by $\IC$ the set of these $U^p(m)$ such that it exists a place
$x$ for which the projection from $U^p$ to $G(\Qm_x)$ doesn't contain any element
with finite order except the identity, cf. \cite{h-t} bellow of page 90. 

Attached to each $I \in \IC$ is a Shimura variety $X_I \rightarrow \spec \OC_v$ 
of type Kottwitz-Harris-Taylor. The projective system $X_\IC=(X_I)_{I \in \IC}$
is then equipped with a Hecke action of $G(\Am^\oo)$, the transition morphisms
$r_{J,I}:X_J \rightarrow X_I$ for $J \subset I$ being finite flat and even etale when $m_v(J)=m_v(I)$.

\begin{notas} (cf. \cite{boyer-invent2} \S 1.3) \label{nota-strate}
Let $I \in \IC$, 
\begin{itemize}
\item the special (resp. generic) fiber of $X_I$ at $v$ will be denoted by $X_{I,s}$ (resp. $X_{I,\eta}$)
and its geometric special (resp. generic) fiber $X_{I,\bar s}:=X_{I,s} \times \spec \overline \Fm_p$
(resp. $X_{I,\bar \eta}$).

\item For $1 \leq h \leq d$, let $X_{I,\bar s}^{\geq h}$ (resp. $X_{I,\bar s}^{=h}$)
be the closed (resp. open) Newton stratum of height $h$, defined as the subscheme
where the connected component of the universal Barsotti-Tate group is of height 
greater or equal to $h$ (resp. equal to $h$).
\end{itemize}
\end{notas}

\noindent \textit{Remark}: $X_{\IC,\bar s}^{\geq h}$ is of pure dimension $d-h$.
For $1 \leq h < d$, the Newton stratum $X_{\IC,\bar s}^{=h}$ is geometrically induced
under the action of the parabolic subgroup $P_{h,d}(\OC_v)$ in the sense where there
exists a closed subscheme $X_{I,\bar s,\overline{1_h}}^{=h}$ stabilized by the Hecke 
action of $P_{h,d}(\OC_v)$ and such that
$$X_{\IC,\bar s}^{=h} \simeq X_{\IC,\bar s,\overline{1_h}}^{=h} 
\times_{P_{h,d}(\OC_v)} GL_d(\OC_v).$$
Let $\GC(h)$ denote the universal Barsotti-Tate group over $X_{I,\bar s,\overline{1_h}}^{=h}$:
$$0 \rightarrow \GC(h)^c \longrightarrow \GC(h) \longrightarrow \GC(h)^{et} \rightarrow 0$$
where $\GC(h)^c$ (resp. $\GC(h)^{et}$) is connected (resp. \'etale) of height $h$ 
(resp. $d-h$). Denote by
$\iota_{m_v}:(\PC_v^{-m_v}/\OC_v)^d \longrightarrow \GC(h)[\PC_v^{m_v}]$
the universal level structure. If we denote by $(e_i)_{1 \leq i \leq d}$ the canonical basis
of $(\PC_v^{-m_v}/\OC_v)^d$, then the Newton stratum 
$X_{I,\bar s,\overline{1_h}}^{=h}$ is defined by asking
$\bigl \{ \iota_{m_v}(e_i):~1 \leq i \leq h \bigr \}$ to be a Drinfeld basis of
$\GC(h)^c[\PC_v^{m_v}]$.

\begin{nota} \label{nota-a}
In the following, we won't make any distinction between an element
$a \in GL_d(F_v)/P_{h,d}(F_v)$ and the subspace $\langle a(e_1),\cdots,a(e_h) \rangle$
generated by the image through $a$ of the first $h$ vectors $e_1,\cdots, e_h$ of the
canonical basis of $F_v^d$. Denote by $P_a(F_v):=aP_{h,d}(F_v)a^{-1}$ the parabolic
subgroup of elements of $GL_d(F_v)$ stabilizing $a \subset F_v^d$.
\end{nota}

For $I \in \IC$, the element $a \in GL_d(F_v)/P_{h,d}(F_v)$ gives
a direct factor $a_{m_v}$ of $(\PC_v^{-m_v}/\OC_v)^d$ and so a stratum 
$X_{I,\bar s,a}^{=h}$ which is defined by asking for a basis 
$(f_1,\cdots,f_h)$ of  $a_{m_v}$, that
$\bigl \{ \iota_{m_v}(f_i):~1 \leq i \leq h \}$ is a Drinfeld basis of $\GC(h)^c[\PC_v^{m_v}]$. 
We also denote by $X_{I,\bar s,a}^{\geq h}$ its closure in $X_{I,\bar s}^{\geq h}$.
Such a stratum is said pure compared to the following situation.
For a pure stratum $X^{=h}_{\IC,\bar s,c}$ and $h'\geq h$, denote by
$$X^{=h'}_{\IC,\bar s,c}:=\coprod_{\atop{a:~\dim a=h'}{c \subset a}} X^{=h'}_{\IC,\bar s,a}$$
and $X^{\geq h'}_{\IC,\bar s,c}$ its closure.

\subsection{Harris-Taylor perverse sheaves}

We recall now some notations about Harris-Taylor local systems of \cite{h-t}. Let
$\pi_v$ be an irreducible cuspidal $\overline \Qm_l$-representation of $GL_g(F_v)$.
Fix $t \geq 1$ such that $tg \leq d$. The Jacquet-Langlands correspondence associates
to $\st_t(\pi_v)$, an irreducible representation $\pi_v[t]_D$ of $D_{v,tg}^\times$.
For $\DC_{v,tg}$ the maximal order of $D_{v,tg}$, we decompose
$(\pi_v[t]_D)_{|\DC_{v,tg}^\times}=\bigoplus_{i=1}^{e_{\pi_v}} \rho_{v,i}$ 
with the $\rho_{v,i}$ irreductible. Thanks to Igusa varieties, we then have a local system
$\LC_{\overline \Qm_l}(\rho_{v,i})_{\overline{1_{tg}}}$ on 
$X^{=tg}_{\IC,\bar s,\overline{1_{tg}}}$ associated with each $\rho_{v,i}$ and we write

\addtocounter{thm}{1}
\begin{equation} \label{eq-epiv}
\LC(\pi_v[t]_D)_{\overline{1_{tg}}}:=
\LC_{\overline \Qm_l}(\rho_{v})_{\overline{1_{tg}}}
\end{equation}
where $\rho_v$ is any of the $\rho_{v,i}$, cf. \cite{boyer-invent2} \S 2.4.4.
Note that the Hecke action of $P_{tg,d}(F_v)$ on (\ref{eq-epiv}) is given
through its quotient $GL_{d-tg}(F_v) \times \Zm$. 

\begin{nota} Let $e_{\pi_v}$ denote the order of the set of $\pi'_v$ inertially equivalent
to $\pi_v$ in the sense there exists a character $\xi:\Zm \longrightarrow \overline \Qm_l^\times$
such that $\pi' \simeq \pi \otimes (\xi \circ \val \circ \det)$ where $\val:F_v \longrightarrow \Zm$
is the valuation of $F_v$; cf. definition 1.1.3 of \cite{boyer-invent2}. 
\end{nota}
%

\begin{notas} \label{nota-rem}
For $\Pi_t$ any representation of $GL_{tg}(F_v)$ and 
$\Xi:\frac{1}{2} \Zm \longrightarrow \overline \Zm_l^\times$ defined by 
$\Xi(\frac{1}{2})=q^{1/2}$, we introduce
$$\widetilde{HT}_{\overline{1_{tg}}}(\pi_v,\Pi_t):=\LC(\pi_v[t]_D)_{\overline{1_{tg}}} 
\otimes \Pi_t \otimes \Xi^{\frac{tg-d}{2}}$$
living over $X_{\IC,\bar s,\overline{1_{tg}}}^{=tg}$ and its induced version living
over $X_{\IC,\bar s}^{=tg}$
$$\widetilde{HT}(\pi_v,\Pi_t):=\Bigl ( \LC(\pi_v[t]_D)_{\overline{1_{tg}}} 
\otimes \Pi_t \otimes \Xi^{\frac{tg-d}{2}} \Bigr) \times_{P_{tg,d}(F_v)} GL_d(F_v),$$
where the unipotent radical of $P_{tg,d}(F_v)$ acts trivially and the action of
$$(g^{\oo,v},\left ( \begin{array}{cc} g_v^c & * \\ 0 & g_v^{et} \end{array} \right ),\sigma_v) 
\in G(\Am^{\oo,v}) \times P_{tg,d}(F_v) \times W_{F_v}$$ 
is given
\begin{itemize}
\item by the action of $g_v^c$ on $\Pi_t$ and 
$\deg(\sigma_v) \in \Zm$ on $ \Xi^{\frac{tg-d}{2}}$, and

\item the action of $(g^{\oo,v},g_v^{et},\val(\det g_v^c)-\deg \sigma_v)
\in G(\Am^{\oo,v}) \times GL_{d-tg}(F_v) \times \Zm$ on $\LC_{\overline \Qm_l}
(\pi_v[t]_D)_{\overline{1_{tg}}} \otimes \Xi^{\frac{tg-d}{2}}$.
\end{itemize}
We also introduce
$$HT_{\overline{1_{tg}}}(\pi_v,\Pi_t):=\widetilde{HT}_{\overline{1_{tg}}}(\pi_v,\Pi_t)[d-tg],$$
and the perverse sheaf
$$P(t,\pi_v)_{\overline{1_{tg}}}:=j^{=tg}_{\overline{1_{tg}},!*} 
HT_{\overline{1_{tg}}}(\pi_v,\st_t(\pi_v)) \otimes \Lm(\pi_v),$$
and their induced version, $HT(\pi_v,\Pi_t)$ and $P(t,\pi_v)$,
where 
\begin{itemize}
\item for any $1 \leq h \leq d$ we denote by
$$j^{=h}:=i^h \circ j^{\geq h}:X^{=h}_{\IC,\bar s} \hookrightarrow
X^{\geq h}_{\IC,\bar s} \hookrightarrow X_{\IC,\bar s},$$
and 
$$j^{=h}_{\overline{1_h}}:=i^h_{\overline{1_h}} \circ j^{\geq h}_{\overline 1_h}:
X^{=h}_{\IC,\bar s,\overline{1_h}} \hookrightarrow
X^{\geq h}_{\IC,\bar s,\overline{1_h}} \hookrightarrow X_{\IC,\bar s}.$$
 
\item The contragredient $\Lm^\vee$ of $\Lm$, is the local Langlands correspondence.
\end{itemize}
\end{notas}
%

\noindent \textit{More notations}:
for $a \in GL_d(F_v)/P_{tg,d}(F_v)$ as in notation \ref{nota-a}, we will also
denote by $HT_a(\pi_v,\Pi_t)$ (resp. $P_a(\pi_v,t)$) 
the image of $HT_{\overline{1_{tg}}}(\pi_v,\Pi_t)$ (resp. $P_{\overline{1_{tg}}}(\pi_v,t)$)
under the action of $a$, which is then equivariant for $P_a(F_v)$.
We will also consider, cf. the end of the previous paragraph, 
non necessarily pure strata $X^{\geq tg}_{\IC,\bar s,c}$
associated with some pure strata $X^{=h}_{\IC,\bar s,c}$ with $h \leq tg$ and 
more specifically to the case where $h=1$ to which we then restrict ourselves.
Denote by 
\addtocounter{thm}{1}
\begin{equation} \label{eq-mirabolic2}
HT_c(\pi_v,\Pi_t):=\ind_{P_{c \subset a}(F_c)}^{P_c(F_v)} HT_a(\pi_v,\Pi_t)
\end{equation}
(resp. $P_c(\pi_v,t):=\ind_{P_{c \subset a}(F_c)}^{P_c(F_v)} P_a(\pi_v,t)$)
where, using notation \ref{nota-a}, 
\begin{itemize}
\item the index $a \in GL_d(F_v)/P_{tg,d}(F_v)$ is 
any element containing $c \in GL_d(F_v)/P_{1,d}(F_v)$ and $P_c(F_v)$
(resp. $P_{c \subset a}(F_v)$) is the parabolic subgroup of $GL_d(F_v)$ stabilizing $c$
(resp. $c \subset a$);

\item we restrict the natural action of $P_a(F_v)$ on $HT_a(\pi_v,\Pi_t)$ 
(resp. on $P_a(\pi_v,t)$) to 
$P_{c \subset a}(F_v)$ before inducing.
\end{itemize}

Consider a geometric supersingular point $z$ and denote
by $i_z: \{ z \} \hookrightarrow X_{\IC,\bar s}$. We then have an action of $P_c(F_v)$
on $\ind_{(D_{v,d}^\times)^0 \varpi_v^\Zm}^{D_{v,d}^\times} \hi^i i_z^* \lexp p j^{=tg}_{c,!*} 
HT_c(\pi_v,\Pi_t)$ (resp. $\ind_{(D_{v,d}^\times)^0 \varpi_v^\Zm}^{D_{v,d}^\times}\hi^i i_z^! 
\lexp p j^{=tg}_{c,!*} HT_c(\pi_v,\Pi_t)$). From (\ref{eq-mirabolic2})  we then obtain
the following abstract description.

\begin{lemma} \label{lem-important}
For any $tg-d < i \leq 0$, 
$\ind_{(D_{v,d}^\times)^0 \varpi_v^\Zm}^{D_{v,d}^\times} \hi^i i_z^* \lexp p j^{=tg}_{c,!*} 
HT_c(\pi_v,\Pi_t)$ as a representation of the mirabolic group
$M_c(F_v)$ associated with $c$, cf. notation \ref{nota-Mc}, is 
isomorphic to a small mirabolic induced representation 
of \ref{prop-zele}, cf. also the remark after loc. cit.
$$(\Pi_t)_{|M_c(F_v)} \times \tau,$$ 
for $\tau$ some representation of $GL_{d-tg}(F_v)$ and where, by abuse of
notation in the term $(\Pi_t)_{|M_c(F_v)}$ of the above formula, 
$M_c(F_v)$ is the mirabolic subgroup of $GL_{tg}(F_v)$.

\end{lemma}

\begin{nota} \label{nota-fact0}
Let $X^{\geq 1}_{\IC,\bar s,c}$ be a pure stratum and denote by
$$j_{\neq c}:=j^{\geq 1}_{\neq c}:X^{\geq 1}_{\IC,\bar s} \setminus X^{\geq 1}_{\IC,\bar s,c}
\hookrightarrow X^{\geq 1}_{\IC,\bar s}.$$
\end{nota}

For $X^{\geq 1}_{\IC,\bar s,c} \neq X^{\geq 1}_{\IC,\bar s,c'}$ two distinct pure strata,
and for $h \geq 2$, we write $\langle c, c' \rangle$ the subspace of $F_v^d$
generated by $\{ c,c' \}$ and 
$$X^{=h}_{\IC,\bar s,\langle c,c' \rangle}=
\coprod_{\atop{a: \dim a=h}{\langle c,c' \rangle \subset a}} X^{=h}_{\IC,\bar s,a},$$
with $j^{=h}_{\langle c,c' \rangle}:X^{=h}_{\IC,\bar s,\langle c,c' \rangle} \hookrightarrow
X^{\geq h}_{\IC,\bar s,\langle c,c' \rangle} \hookrightarrow X^{\geq 1}_{\IC,\bar s}.$

Consider a pure stratum $X^{=h}_{\IC,\bar s,a}$ with $a \supset \langle c,c' \rangle$.
For $HT_a(\pi_v,\Pi_t)$ a Harris-Taylor local system on $X^{=h}_{\IC,\bar s,a}$,
we will denote by 
\addtocounter{thm}{1}
\begin{equation} \label{eq-mirabolic3}
HT_{\langle c,c' \rangle}(\pi_v,\Pi_t):=
\ind_{P_{\langle c,c' \rangle \subset a}(F_v)}^{P_{\langle c,c'\rangle}(F_v)} HT_a(\pi_v,\Pi_t),
\end{equation}
where $P_{\langle c,c' \rangle}(F_v)$ (resp. $P_{\langle c,c' \rangle \subset a}(F_v)$) 
is the parabolic subgroup of elements of 
$GL_d(F_v)$ stabilizing $\langle c,c' \rangle$ (resp. $\langle c,c'\rangle \subset a$).

Consider now the subgroup $P_{c,c'}(F_v)$
of $P_c(F_v)$ stabilizing $c'$. Every element of $P_c(F_v)$
induces a endomorphism of $F_v^d/c$ and the image of $P_{c,c'}(F_v)$ is then 
the parabolic $P_{c'}(F_v)$ of $GL(F_v^d/c)$.

\begin{lemma} \label{lem-j-c}
With the previous notations, and $\pi_v$ an irreducible cuspidal entire
$\overline \Qm_l$-representation of $GL_g(F_v)$, we have the following short exact sequence
of $P_{c,c'}(F_v)$-equivariant perverse sheaves
\begin{multline*}
0 \rightarrow
\lexp p j^{=(t+1)g}_{\langle c,c' \rangle,!*} HT_{\langle c,c' \rangle}
\bigr (\pi_v,\st_{t+1}(\pi_v) \bigl ) (\frac{1}{2})
\longrightarrow \\ 
j^{=1}_{\neq c',!} ~ j^{=1,*}_{\neq c'} 
\bigl ( \lexp p j^{=tg}_{c,!*} HT_{c}(\pi_v,\st_t(\pi_v) )
\bigr ) \\ \longrightarrow \lexp p  j^{=1}_{\neq c',!*} ~ j^{=1,*}_{\neq c'} 
\bigl ( \lexp p j^{=tg}_{c,!*} HT_{c}(\pi_v,\st_t(\pi_v) ) \bigr ) \rightarrow 0.
\end{multline*}
\end{lemma}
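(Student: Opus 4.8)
The plan is to realize the claimed short exact sequence as the image, under the functor $j^{=1}_{\neq c',!}j^{=1,*}_{\neq c'}$ applied to a simpler exact sequence of perverse sheaves on $X^{\geq tg}_{\IC,\bar s,c}$, together with an identification of the kernel term. First I would recall from \cite{boyer-invent2} the standard exact triangle attached to the open–closed decomposition of $X^{\geq tg}_{\IC,\bar s,c}$ by the deeper stratum $X^{\geq(t+1)g}_{\IC,\bar s,c}$: the intermediate extension $\lexp p j^{=tg}_{c,!*}HT_{\overline\Qm_l,c}(\pi_v,\st_t(\pi_v))$ sits in a filtration whose graded pieces on the deeper stratum are controlled by the derivatives of $\st_t(\pi_v)$ restricted to the mirabolic/parabolic, exactly in the spirit of the computation of derivatives recalled in the proof of Lemma~\ref{lem-mirabolique}. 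The key input is that the $g$-th derivative of $\st_t(\pi_v)$ is $\st_{t-1}(\pi_v\{1/2\})$, so that the ``boundary contribution'' at the stratum of dimension one is governed by $\st_{t+1}(\pi_v)_{|P_{\langle c,c'\rangle}(F_v)}$ twisted by $\Xi^{1/2}$.

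Next I would apply $j^{=1}_{\neq c',!}$ and use the base change / vanishing properties of the various $j$'s. The point is that the non-degeneracy phenomena of Section~1, in particular Proposition~\ref{prop-zele}(a) and Lemma~\ref{lem-mirabolique}, translate geometrically into the fact that applying $j^{=1}_{\neq c',!}j^{=1,*}_{\neq c'}$ to $\lexp p j^{=tg}_{c,!*}HT_{\overline\Qm_l,c}(\pi_v,\st_t(\pi_v))$ produces a complex whose only non-semisimple contribution comes from the interplay of the two strata indexed by $c$ and $c'$; the quotient is exactly the ``$!*$-version'' $\lexp p j^{=1}_{\neq c',!*}j^{=1,*}_{\neq c'}HT_{\overline\Qm_l,c}(\pi_v,\st_t(\pi_v))$, which is how the third term of the sequence arises. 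The kernel is then a perverse sheaf supported on the stratum $X^{=(t+1)g}_{\IC,\bar s,\langle c,c'\rangle}$, and I would identify it with $\lexp p j^{=(t+1)g}_{\langle c,c'\rangle,!*}HT_{\overline\Qm_l,\langle c,c'\rangle}(\pi_v,\st_{t+1}(\pi_v)_{|P_{\langle c,c'\rangle}(F_v)})\otimes\Xi^{1/2}$ by comparing local systems: on $X^{=(t+1)g}_{\IC,\bar s,\langle c,c'\rangle}$ one gets the induced Harris–Taylor system $HT_{\langle c,c'\rangle}$ attached to $\st_{t+1}(\pi_v)$ restricted to the relevant parabolic, with the $\Xi^{1/2}$-twist accounting for the shift in the $\Xi$-normalization of $\widetilde{HT}$ when passing from $t$ to $t+1$.

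The main obstacle I expect is the precise bookkeeping needed to show that the kernel is the \emph{intermediate} extension and not merely $j_!$ or $j_*$ of that local system, i.e. that no extra sub- or quotient-perverse sheaf supported on even deeper strata appears. This is where one must invoke, stratum by stratum, the fact that the derivatives of $\st_t(\pi_v)$ below the critical order vanish except in degrees that are multiples of $g$, so that the only boundary contributions are the expected ones; combined with the purity statement that $X^{\geq h}_{\IC,\bar s,c}$ is of pure dimension $d-h$, this forces the perverse truncations to be clean and yields the $!*$-extension. Once the three terms are correctly identified, exactness is automatic from the construction as the image of an exact sequence under an exact functor, so no further computation is needed.
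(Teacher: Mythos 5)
Your overall strategy --- the open/closed decomposition along $X^{\geq 1}_{\IC,\bar s,c'}$ and the identification of the boundary term through the derivatives of $\st_t(\pi_v)$ and Lemma \ref{lem-mirabolique} --- is indeed the paper's, but two points in your write-up do not hold up. First, the sequence cannot be ``the image under $j^{=1}_{\neq c',!}j^{=1,*}_{\neq c'}$ of an exact sequence'', and exactness is not ``automatic from an exact functor'': the first term is supported on strata containing $c'$, hence is killed by $j^{=1,*}_{\neq c'}$. What is actually used is the canonical exact sequence attached to $\lexp p j_{\neq c',!}j^*_{\neq c'}P \rightarrow \lexp p j_{\neq c',!*}j^*_{\neq c'}P$ for the perverse sheaf $P=\lexp p j^{=tg}_{c,!*} HT_{\overline \Qm_l,c}(\pi_v,\st_t(\pi_v))$, whose kernel is $i^1_{c',*}\lexp p \hi^{-1} i^{1,*}_{c'}P$; the entire content of the lemma is the computation of that kernel.

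Second, your mechanism for pinning the kernel down as the intermediate extension is insufficient. Knowing where the stalks vanish (the ``multiples of $g$'' pattern of the derivatives) together with purity of the strata controls only the support condition, not the cosupport condition, and in any case you have not produced a comparison map between the kernel and the candidate. The paper proceeds in two steps: (i) applying $\lexp p \hi^{-1} i^{tg+1,*}_{c'}$ to the weight filtration of $j^{=tg}_{c,!}HT_c(\pi_v,\st_t(\pi_v))$ described in \cite{boyer-invent2} 4.5.1 yields a \emph{surjection} from the kernel onto $\lexp p j^{=(t+1)g}_{\langle c,c'\rangle,!*}HT_{\langle c,c'\rangle}(\pi_v,\st_{t+1}(\pi_v)_{|P_{\langle c,c'\rangle}(F_v)})\otimes \Xi^{1/2}$, using the first identity of Lemma \ref{lem-mirabolique}; (ii) this surjection is an isomorphism because the germs of the cohomology sheaves of both sides agree at \emph{every} geometric point of \emph{every} deeper stratum $X^{=(t+\delta)g}_{\IC,\bar s,c'}$, where by \cite{boyer-invent2} the germ on one side is $HT(\pi_v,\Pi)$ with $\Pi=(\st_t(\pi_v\{-\delta/2\}))_{|P_{c,c'}(F_v)}\times \speh_\delta(\pi_v\{t/2\})$ and on the other $HT(\pi_v,\Pi')$ with $\Pi'=\st_{t+1}(\pi_v)_{|P_{c,c'}(F_v)}\{(1-\delta)/2\}\times\speh_{\delta-1}(\pi_v\{(t+1)/2\})$, both identified with $LT_{\pi_v}(t,\delta-1)_{|P_{c,c'}(F_v)}$ by the \emph{two} identities of Lemma \ref{lem-mirabolique}. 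Your proposal invokes that lemma only on the top boundary stratum; the identification of the non-zero germs on all deeper strata (not merely their vanishing pattern) is the missing step.
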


\rem using the main results of \cite{boyer-duke}, we could easily proved that the lemma 
is still valid over $\overline \Zm_l$.

\begin{proof}
Recall that for $P$ a perverse sheaf on $X_{I,\bar s}$, the kernel of 
$j^{=1}_{\neq c',!}j^{=1,*}_{\neq c'} P \twoheadrightarrow j^{=1}_{\neq c',!*}j^{=1,*}_{\neq c'} P$
is given by $\lexp p \hi^{-1} i^{1,*}_{c'} P$ so that we are reduced to prove that
$$\lexp p \hi^{-1} i^{1,*}_{c'} (  \lexp p j^{=tg}_{c,!*} HT_c(\pi_v,\st_t(\pi_v)) )
\simeq \lexp p j^{=(t+1)g}_{\langle c,c' \rangle,!*} HT_{\overline \Qm_l, \langle c,c' \rangle}
\bigr (\pi_v,\st_{t+1}(\pi_v) \bigl ) (\frac{1}{2}).$$
In \cite{boyer-invent2} 4.3.1, we described $j^{=tg}_{a,!} 
HT_{a} (\pi_v,\st_t(\pi_v))$ in the Grothendieck group of equivariant
perverse sheaves and the weight filtration gives us a filtration, cf. also \cite{boyer-FT} (5.4), with the successive
graded parts 
$$\lexp p j^{=(t+\delta)g}_{a,!*} HT_a(\pi_v,\st_t(\pi_v) \{ \frac{\delta(g-1)}{2} \}
\otimes \st_\delta(\pi_v)  \{ \frac{t(1-g)}{2} \}) (\delta/2).$$
By inducing from $P_{c \subset a}(F_v)$ to $P_c(F_v)$, we then obtain a filtration 
$\Fil^\bullet_c(t)$ of
$\lexp p j^{=tg}_{c,!} HT_c(\pi_v,\st_t(\pi_v))$ with graded parts
$$\gr^{-\delta}_c(t):=\lexp p j^{=(t+\delta)g}_{c,!*} HT_c(\pi_v,\st_t(\pi_v)_{|P_c(F_v)} \{ \frac{-\delta}{2} \}
\times \st_\delta(\pi_v)  \{ \frac{t}{2} \}) (\delta/2),$$
where by lemma \ref{lem-mirabolique},  and taking into account the notation 
$\times$ in \ref{nota-ind},
$$\bigl ( \st_t(\pi_v \{ \frac{-\delta}{2} \} )  \bigr )_{|P_{c}(F_v)} \times \st_\delta(\pi_v \{ \frac{t}{2} \}) 
\simeq \st_{t+\delta}(\pi_v)_{|P_{c}(F_v)}.$$ 
We then apply then the functor $\lexp p \hi^{-1} i^{1,*}_{c'}$ to this filtration
of $j^{=tg}_{c,!} HT_{c} (\pi_v,\st_t(\pi_v))$, so that we obtain
$$
\lexp p \hi^{-1} i^{1,*}_{c'} \bigl( \lexp p j^{=tg}_{c,!*} HT_{c}
(\pi_v,\st_t(\pi_v)) \bigr ) \simeq \lexp p \hi^0  i^{1,*}_{c'}  \Fil^{-1}_c(t) 
\twoheadrightarrow  \lexp p \hi^0  i^{1,*}_{c'}  \gr^{-1}_c(t),$$
with $ \lexp p \hi^0  i^{1,*}_{c'}  \gr^{-1}_c(t) \simeq
\lexp p j^{=(t+1)g}_{\langle c,c' \rangle,!*} HT_{\langle c,c' \rangle}  ( \pi_v,
\st_{t+1}(\pi_v)  ) (\frac{1}{2})$.

Now we use the computations of \cite{boyer-FT} corollary 6.6, where we proved the same
result for the whole of $X^{\geq 1}_{I,\bar s}$ instead of $X^{\geq 1}_{I,\bar s,c}$, i.e.
$$\lexp p \hi^{-1} i^{1,*}_{c'} \bigl( \lexp p j^{=tg}_{!*} HT
(\pi_v,\st_t(\pi_v)) \bigr ) \simeq \lexp p j^{=(t+1)g}_{c',!*} HT_{c}(\pi_v,
\st_{t+1}(\pi_v)  ) (\frac{1}{2}).$$
Note also, cf. \cite{boyer-FT} lemma 6.2, that 
$\lexp p \hi^{-k} i^{1,*}_{c'} \bigl( \lexp p j^{=tg}_{!*} HT
(\pi_v,\st_t(\pi_v)) \bigr ) $ is zero for any $k \neq -1$. We then apply the functor 
$\lexp p \hi^{-1} i^{1,*}_{c'}$ to the short exact sequence of perverse sheaves
\begin{multline*}
0 \rightarrow  \lexp p j^{=tg}_{c,!*} HT_{c} (\pi_v,\st_t(\pi_v)) \longrightarrow
 \lexp p j^{=tg}_{!*} HT (\pi_v,\st_t(\pi_v)) \\
  \longrightarrow  \lexp p j^{=tg}_{\neq c,!*} HT_{\neq c}
(\pi_v,\st_t(\pi_v)) \rightarrow 0,
\end{multline*}
which gives us in particular
$$\lexp p \hi^{-1} i^{tg+1,*}_{c'} \bigl( \lexp p j^{=tg}_{c,!*} HT_{c}
(\pi_v,\st_t(\pi_v)) \bigr ) \hookrightarrow \lexp p j^{=(t+1)g}_{c',!*} HT_{c}(\pi_v,
\st_{t+1}(\pi_v)  ) (\frac{1}{2}),$$
which factorizes to $\lexp p j^{=(t+1)g}_{\langle c,c' \rangle,!*} HT_{\langle c,c' \rangle}  
( \pi_v, \st_{t+1}(\pi_v)  ) (\frac{1}{2})$, just by considering the supports.

%
%
%
%
\end{proof}

%
%
%
%

\section{Some coarse filtrations of $\Psi_\IC$}
\label{para-coarse}

\subsection{Filtrations of free perverse sheaves}
\label{para-filtration}

Let $S=\spec \Fm_q$ and $X/S$ of finite type, then the usual $t$-structure on
$\DC(X,\overline \Zm_l):=D^b_c(X,\overline \Zm_l)$ is
$$\begin{array}{l}
A \in \lexp p \DC^{\leq 0}(X,\overline \Zm_l)
\Leftrightarrow \forall x \in X,~\hi^k i_x^* A=0,~\forall k >- \dim \overline{\{ x \} } \\
A \in \lexp p \DC^{\geq 0}(X,\overline \Zm_l) \Leftrightarrow \forall x \in X,~\
\hi^k i_x^! A=0,~\forall k <- \dim \overline{\{ x \} }
\end{array}$$
where $i_x:\spec \kappa(x) \hookrightarrow X$ and $\hi^k(K)$ is the $k$-th sheaf 
of cohomology of $K$.

\begin{nota} 
Denote by $\lexp p \CC(X,\overline \Zm_l)$ the heart of this $t$-structure with associated 
cohomology functors $\lexp p \hi^i$. For a functor $T$ we denote by 
$\lexp p T:=\lexp p \hi^0 \circ T$.
\end{nota}

The category  $\lexp p \CC(X,\overline \Zm_l)$ is abelian equipped, cf.
\cite{boyer-torsion} \S 1.1, with a torsion theory 
$(\TC,\FC)$ where $\TC$ (resp. $\FC$) is the full subcategory of objects $T$ (resp. $F$) 
such that $l^N 1_T$ is trivial for some large enough $N$(resp. $l.1_F$ is a monomorphism). 
Recall that this means in particular 
that for every object $A$ there exists a short exact sequence
$$0 \rightarrow A_{tor} \longrightarrow A \longrightarrow A_{free} \rightarrow 0$$
with $A_{tor} \in \TC$ and $A_{free} \in \FC$.
Applying Grothendieck-Verdier duality, we obtain
$$\begin{array}{l}
\lexp {p+} \DC^{\leq 0}(X,\overline \Zm_l):= \{ A \in \lexp p \DC^{\leq 1}(X,\overline \Zm_l):~
\lexp p \hi^1(A) \in \TC \} \\
\lexp {p+} \DC^{\geq 0}(X,\overline \Zm_l):= \{ A \in \lexp p \DC^{\geq 0}(X,\overline \Zm_l):~
\lexp p \hi^0(A) \in \FC \} \\
\end{array}$$
with heart$\lexp {p+} \CC(X,\overline \Zm_l)$  equipped with its torsion theory
$(\FC,\TC[-1])$.

\begin{defin} (cf. \cite{boyer-torsion} \S 1.3) \label{defi-FC}
Let
$$\FC(X,\overline \Zm_l):=\lexp p \CC(X,\overline \Zm_l) \cap \lexp {p+} \CC(X,\overline \Zm_l)
=\lexp p \DC^{\leq 0}(X,\overline \Zm_l) \cap \lexp {p+} \DC^{\geq 0}(X,\overline \Zm_l)$$
the quasi-abelian category of free perverse sheaves over $X$.
\end{defin}

\noindent \textit{Remark}: for an object $L$ of $\FC(X,\overline \Zm_l)$, 
we will consider filtrations
$$L_1 \subset L_2 \subset \cdots \subset L_e=L$$ 
such that for every $1 \leq i \leq e-1$, $L_i \hookrightarrow L_{i+1}$ is a
strict monomorphism, i.e. $L_{i+1}/L_i$ is an object of $\FC(X,\overline \Zm_l)$.

Consider an open subscheme $j:U \hookrightarrow X$ and 
$i: F:=X \backslash U \hookrightarrow X$. Then
$$\lexp {p+} j_! \FC(U,\overline \Zm_l) \subset \FC(X,\overline \Zm_l) \quad \hbox{ and } \quad \lexp p j_* 
\FC(U,\overline \Zm_l) \subset \FC(X,\overline \Zm_l).$$
Moreover, if $j$ is affine then $j_!$ is $t$-exact and $j_!=\lexp p j_!=\lexp {p+} j_!$.

\begin{lemma} \label{lem-libre0}
Consider $L \in \FC(X,\overline \Zm_l)$ such that $j_! j^* L \in \FC(X,\overline \Zm_l)$. Then
$i_*\lexp p \hi^{-\delta} i^* L$ is trivial for every $\delta \neq 0,1$; for $\delta=1$
it belongs to $\FC(X,\overline \Zm_l)$.
\end{lemma}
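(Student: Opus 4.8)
The plan is to use the standard excision triangle $j_!j^*L \to L \to i_*i^*L \xrightarrow{+1}$ and extract information about the perverse cohomology sheaves of $i^*L$ from the hypotheses that both $L$ and $j_!j^*L$ are objects of $\FC(X,\Lambda)$. First I would recall that a free perverse sheaf sits in perverse degrees $[0,1]$ (more precisely, $M \in \FC(X,\Lambda)$ means $M \in \lexp p\DC^{\leq 0}$ with $\lexp p\hi^1(M)$ torsion-free in the $\lexp{p+}$ sense, i.e. $M \in \lexp p\DC^{\leq 0} \cap \lexp{p+}\DC^{\geq 0}$), so applying the perverse cohomology functors $\lexp p\hi^\bullet$ to the triangle gives a long exact sequence in $\lexp p\CC(X,\Lambda)$ relating $\lexp p\hi^k(j_!j^*L)$, $\lexp p\hi^k(L)$ and $\lexp p\hi^k(i_*i^*L) = i_*\lexp p\hi^k(i^*L)$, where the last identification uses that $i_*$ is perverse $t$-exact (a closed immersion). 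Since $i^*$ is right $t$-exact, $\lexp p\hi^k(i^*L)$ can only be nonzero for $k \leq 0$; combined with the long exact sequence and the vanishing of $\lexp p\hi^k$ of both $L$ and $j_!j^*L$ outside $\{0,1\}$, one forces $i_*\lexp p\hi^{-\delta}(i^*L) = 0$ for all $\delta \neq 0,1$, which is the first assertion.

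Next, for $\delta = 1$, I would read off from the long exact sequence the piece
$$0 \to i_*\lexp p\hi^{-1}(i^*L) \to \lexp p\hi^0(j_!j^*L) \to \lexp p\hi^0(L) \to i_*\lexp p\hi^0(i^*L) \to \lexp p\hi^1(j_!j^*L) \to \lexp p\hi^1(L) \to 0,$$
so that $i_*\lexp p\hi^{-1}(i^*L)$ is identified with the kernel of the map $\lexp p\hi^0(j_!j^*L) \to \lexp p\hi^0(L)$, hence with a \emph{subobject} of $\lexp p\hi^0(j_!j^*L)$ in the abelian category $\lexp p\CC(X,\Lambda)$. Since $j_!j^*L \in \FC(X,\Lambda)$, its lowest perverse cohomology $\lexp p\hi^0(j_!j^*L)$ is torsion-free; and a subobject of a torsion-free object in $\lexp p\CC(X,\Lambda)$ is again torsion-free (torsion is closed under subobjects). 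Because $i_*\lexp p\hi^{-1}(i^*L)$ is a single perverse sheaf placed in one perverse degree and it is torsion-free, it lies in $\lexp p\CC \cap \lexp{p+}\CC = \FC(X,\Lambda)$; equivalently $\lexp p\hi^{-1}(i^*L)$ is a free perverse sheaf on $F$ and $i_*$ preserves $\FC$.

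The main obstacle I anticipate is bookkeeping with the two $t$-structures $\lexp p$ and $\lexp{p+}$ simultaneously: one must be careful that "$L \in \FC$" is not simply "$\lexp p\hi^k(L) = 0$ for $k \neq 0$" but the slightly weaker two-sided condition allowing a torsion-free $\lexp p\hi^1$, and correspondingly that the needed input for the $\delta=1$ statement is precisely torsion-freeness of $\lexp p\hi^0(j_!j^*L)$ rather than its vanishing. Checking that torsion (in the sense of the torsion theory $(\TC,\FC)$ on $\lexp p\CC$) passes to subobjects, and that an object concentrated in a single perverse degree which is torsion-free automatically lies in $\lexp{p+}\DC^{\geq 0}$, are the two small lemmas that make the argument go through; both are immediate from the definitions recalled before Definition \ref{defi-FC} and from \cite{boyer-torsion} \S 1.3. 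Everything else is the routine long-exact-sequence chase sketched above.
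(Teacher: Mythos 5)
Your proposal is correct and follows essentially the same route as the paper: the excision triangle $j_!j^*L \to L \to i_*i^*L$, the long exact sequence of perverse cohomology (which collapses to $0 \to i_*\lexp p\hi^{-1}i^*L \to j_!j^*L \to L \to i_*\lexp p\hi^0 i^*L \to 0$ since both $L$ and $j_!j^*L$ are $p$-perverse), and the identification of $i_*\lexp p\hi^{-1}i^*L$ as a subobject of the free object $j_!j^*L$, hence torsion-free and in $\FC(X,\Lambda)$. The only cosmetic difference is that your sequence carries $\lexp p\hi^1$ terms that already vanish because objects of $\FC$ lie in $\lexp p\DC^{\leq 0}\cap\lexp{p+}\DC^{\geq 0}\subset\lexp p\CC$, i.e.\ are concentrated in a single $p$-perverse degree.
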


\rem If $j$ is affine then the condition $j_! j^* L \in \FC(X,\overline \Zm_l)$ is fulfilled.

\begin{proof}
Start from the following distinguished triangle
$j_!j^* L \longrightarrow L \longrightarrow i_*i^* L \leadsto$.
From the perversity of $L$ and $j_! j^* L$, the long exact sequence of perverse cohomology is
$$0 \rightarrow i_* \lexp p \hi^{-1} i^* L \longrightarrow \lexp p j_! j^* L
\longrightarrow L \longrightarrow i_* \lexp p \hi^0i^* L \rightarrow 0.$$
The freeness of $ i_* \lexp p \hi^{-1} i^* L$ then follows from those of 
$\lexp p j_! j^* L=j_!j^* L$.
\end{proof}

\begin{defin} Recall the following notions, cf. \cite{boyer-torsion} definition 1.3.4.
\begin{itemize}
\item A \emph{bimorphism} of $\FC(X,\overline \Zm_l)$, written $L \htarrow L'$, 
is both a monomorphism in $\lexp {p} \CC(X,\overline \Zm_l)$ and an epimorphism
in $\lexp {p+} \CC(X,\overline \Zm_l)$. If moreover the cokernel in 
$\lexp {p} \CC(X,\overline \Zm_l)$ is of dimension strictly less than those of the support of $L$,
we will write $L \htarrow_+ L'$.

\item A morphism $L \longrightarrow L'$ is a strict monomorphism (resp.
a strict epimorphism) if it is a monomorphism (resp. an epimorphism) in
$\lexp {p+} \CC(X,\overline \Zm_l)$ (resp. in $\lexp {p} \CC(X,\overline \Zm_l)$) in which case
we denote it by $L \harrow L'$ (resp. $L \tarrow L'$).
\end{itemize}
\end{defin}

For a free $L \in \FC(X,\overline \Zm_l)$, we consider the following diagram
$$\xymatrix{ 
& L \ar[drr]^{\can_{*,L}} \\
\lexp {p+} j_! j^* L \ar[ur]^{\can_{!,L}} 
\ar@{->>}[r]|-{+} & \lexp p j_{!*}j^* L \ar@{^{(}->>}[r]_+ & 
\lexp {p+} j_{!*} j^* L \ar@{^{(}->}[r]|-{+} & \lexp p j_*j^* L
}$$
where the bottom row is, cf. the remark following 1.3.10 of \cite{boyer-torsion},
the canonical factorisation of $\lexp {p+} j_! j^* L \longrightarrow \lexp {p} j_* j^* L$
and where the maps $\can_{!,L}$ and $\can_{*,L}$ are given by the adjunction property.

\begin{nota} \label{nota-filtration0} (cf. lemma 2.1.2 of \cite{boyer-torsion})
We introduce the filtration $\Fil^{-1}_{U,!}(L) \subset \Fil^0_{U,!}(L) \subset L$ with
$$\Fil^0_{U,!}(L)=\im_\FC (\can_{!,L}) \quad \hbox{ and } \quad \Fil^{-1}_{U,!}(L)=
\im_\FC \Bigl ( (\can_{!,L})_{|\PC_L} \Bigr ),$$
where $\PC_L:=i_*\lexp p \hi^{-1}_{free} i^*j_* j^* L$ is the kernel of
$\ker_\FC \Bigl ( \lexp {p+} j_!  j^* L \twoheadrightarrow \lexp p j_{!*} j^* L \Bigr )$.
\end{nota}

\noindent \textit{Remark}: we have
$L/\Fil^0_{U,!}(L) \simeq i_* \lexp {p+} i^* L$ and 
$\lexp p j_{!*} j^* L \htarrow_+ \Fil^0_{U,!}(L)/\Fil^{-1}_{U,!}(L)$, which gives, cf.
lemma 1.3.11 of \cite{boyer-torsion}, a commutative triangle
$$\xymatrix{
\lexp p j_{!*} j^* L \ar@{^{(}->>}[rr]_+ \ar@{^{(}->>}[drr]_+ 
& & \Fil^0_{U,!}(L)/\Fil^{-1}_{U,!}(L) \ar@{^{(}->>}[d]_+ \\
& & \lexp {p+} j_{!*} j^* L.
}$$

\begin{nota}  \label{nota-cofiltration0} (cf. \cite{boyer-torsion} 2.1.4) 
Dually there is a cofiltration 
$L \twoheadrightarrow \CoFil_{U,*,0}(L) \twoheadrightarrow \CoFil_{U,*,1}(L)$
where
$$\CoFil_{U,*,0}(L)=\coim_\FC(\can_{*,L}) \quad \hbox{ and } \quad 
\CoFil_{U,*,1}(L)=\coim_\FC(p_L \circ \can_{*,L}),$$
with $p_L:\lexp {p+} j_{!*} j^* L \twoheadrightarrow Q_L:=i_* \lexp p \hi^0_{free} i^*j_*j^* L$.
\end{nota}

\noindent \textit{Remark}: the kernel $\cogr_{U,*,1}(L)$ of 
$\CoFil_{U,*,0}(L) \twoheadrightarrow \CoFil_{U,*,1}(L)$ verifies
$$ \lexp p j_{!*} j^* L \htarrow_+ \cogr_{U,*,1}(L) \htarrow_+ \lexp {p+} j_{!*} j^* L.$$
The kernel  $\cogr_{U,*,0}(L)$ of $L \twoheadrightarrow \CoFil_{U,*,0}(L)$ is
isomorphic to $i_* \lexp p i^! L$.

Consider now $X$ equipped with a stratification consisting of closed subsets
$$X=X^{\geq 1} \supset X^{\geq 2} \supset \cdots \supset X^{\geq d,}$$
and let $L \in \FC(X,\overline \Zm_l)$.
For $1 \leq h <d$, denote by
$X^{1 \leq h}:=X^{\geq 1}-X^{\geq h+1}$ and $j^{1 \leq h}:X^{1 \leq h} \hookrightarrow
X^{\geq 1}$. We then define
$$\Fil^r_{!}(L):=\im_\FC \Bigl ( \lexp {p+} j^{1 \leq r}_! j^{1 \leq r,*}L \longrightarrow L\Bigr ),$$
which gives a filtration
$$0=\Fil^{0}_{!}(L) \subset \Fil^1_{!}(L) \subset \Fil^2_{!}(L) \cdots \subset \Fil^{d-1}_{!}(L) 
\subset \Fil^d_{!}(L)=L.$$

Dually, the following 
$$\CoFil_{*,r}(L)=\coim_\FC\Bigl ( L \longrightarrow \lexp {p} j^{1 \leq r}_* j^{1 \leq r,*}L \Bigr ),$$
define a cofiltration
\begin{multline*}
L = \CoFil_{*,d}(L) \twoheadrightarrow \CoFil_{*,d-1}(L) \twoheadrightarrow \cdots \\
\cdots
\twoheadrightarrow \CoFil_{*,1}(L) \twoheadrightarrow \CoFil_{*,0}(L)=0,
\end{multline*}
and a filtration
$$0=\Fil_{*}^{-d}(L) \subset \Fil_{*}^{1-d}(L) \subset \cdots \subset \Fil_{*}^0(L)=L$$
where $$\Fil_{*}^{-r}(L):=\ker_\FC \bigl ( L \twoheadrightarrow \CoFil_{*,r}(L) \bigr ).$$
Note these two constructions are exchanged by Grothendieck-Verdier duality, 
$$D \Bigl ( \CoFil_{!,-r}(L) \Bigr ) \simeq \Fil^{-r}_{*}( D(L)) \hbox{ and }
D \Bigl ( \CoFil_{*,r}(L) \Bigr ) \simeq \Fil^{r}_{!}( D(L)).$$

We can also refine the previous filtrations with the help of $\Fil^{-1}_{U,!}(L)$, 
cf. \cite{boyer-torsion} proposition 2.3.2, to obtain exhaustive filtrations
\addtocounter{thm}{1}
\begin{multline} \label{eq-fill}
0=\Fill^{-2^{d-1}}_{!}(L) \subset \Fill^{-2^{d-1}+1}_{!}(L) \subset \cdots  \\
\cdots \subset
\Fill^0_{!}(L) \subset \cdots \subset \Fill^{2^{d-1}-1}_{!}(L)=L,
\end{multline}
such that the graded pieces $\grr^k(L)$ are simple over $\overline \Qm_l$,
i.e. verify $\lexp p j^{= h}_{!*} j^{= h,*} \grr^k(L)  \htarrow_+ \grr^k(L)$
for some $h$.
Dually using $:\coFil_{U,*,1}(L)$, we construct a cofiltration
$$L=\CoFill_{*,2^{d-1}}(L) \twoheadrightarrow \CoFill_{*,2^{d-1}-1}(L)
\twoheadrightarrow \cdots \twoheadrightarrow \CoFill_{*,-2^{d-1}}(L)=0$$
and a filtration $\Fill_{*}^{-r}(L):=\ker_\FC \bigl ( L \twoheadrightarrow \CoFill_{*,r}(L) \bigr )$.
These two constructions are exchanged by duality 
$$D \bigl ( \CoFill_{*,r}(L) \bigr ) \simeq \Fill^r_{!} \bigl ( D(L) \bigr ) \hbox{ and }
D \bigl ( \CoFill_{!,r}(L) \bigr ) \simeq \Fill^r_{*} \bigl ( D(L) \bigr )$$
and can be mixed if we want to.

\subsection{Remarks and terminology about perverse sheaves}
\label{para-terminology}

Be cause the previous definitions come 
from the geometry, it is then possible to
construct filtrations whatever is the ring of coefficients. 
Moreover when you want to understand the graded pieces, 
\begin{itemize}
\item you can first look at
these filtrations over $\overline \Qm_l$ which gives you the simple perverse sheaves described in
terms of an intermediate extension $i_* j_{!*} \LC[-\delta]$ of some local system $\LC$
living in some locally closed stratum $\xymatrix{
U \ar@{^{(}->}[r]^j & \overline U \ar@{^{(}->}[r]^i & X }$
where $\delta$ is the dimension of $U$.

\item Then you have to understand the $\overline \Zm_l$-lattice of $\LC$; in the following we will
speak about \emph{the lattice of the perverse sheaf}.

\item And finally determine \emph{the position} of the graded piece between the two natural 
intermediate extension
$i_* \lexp p j_{!*} \LC[-\delta] \htarrow_+ i_* \lexp {p+} j_{!*} \LC[-\delta]$.
\end{itemize}
One also have to take into account that the lattices and the positions depend strongly on the 
order of the graded pieces,
i.e. for two different filtrations $\Fil_1^\bullet$ and $\Fil_2^\bullet$, then two
graded pieces $\gr^{k_1}_1$ and $\gr^{k_2}_2$ which are isomorphic over $\overline \Qm_l$, 
might be not isomorphic over $\overline \Zm_l$ either because the lattices, 
or their positions, are different.

Finally as remarked in the introduction, when taking image in $\FC$
or kernel in $\FC^+$, you loose control of lattices and positions:\footnote{The understanding of positions in the previous 
meaning, is solved in \cite{boyer-duke}.} 
we then speak of a
\emph{saturation} process as it corresponds to the usual saturation of lattices in the
case where the geometric support is of dimension $0$. In the following we will
focus on graded pieces concentrated on the supersingular locus so that there is no issue
about the positions. Concerning the lattices of these graded pieces,
we advise the reader
when reading the arguments of \S \ref{para-lattice} to focus on this issue to understand how
we manage to recover the lattices.

\rem by arguing inductively on the Lubin-Tate spaces, we will in fact be able, cf. the proof of
theorem \ref{thm-principal1}, to understand among all
the lattices of Harris-Taylor perverse sheaf $P(t,\pi_v)(\frac{1-t+2k}{2})$ with $0 \leq k \leq t-1$,
given by filtrations of stratification of $\Psi_\IC$, those for $k=0$ and $k=t-1$. It would then be possible, and quite
easy using the coarse filtrations of \S \ref{para-coarse}, to describe the others which then depend on how we decide
to filtrate $\Psi_\IC$, as opposed to the cases where $k=0,t-1$ considered here. 
We postpone this work to the day when we will find an application.

\subsection{Supercuspidal decomposition of $\Psi_\IC$}
\label{para-dec-psi}

\begin{nota}
For $I \in \IC$, let 
$$\Psi_{I,\Lambda}:=R\Psi_{\eta_v,I}(\Lambda[d-1])(\frac{d-1}{2})$$
be the vanishing cycle autodual perverse sheaf on $X_{I,\bar s}$.
When $\Lambda=\overline \Zm_l$, we will simply write $\Psi_I$.
\end{nota}

As before, we will use the notation $\Psi_\IC$ for the system $(\Psi_I)_{I \in \IC}$.
Recall the following result of \cite{h-t} relating $\Psi_{\IC}$ with Harris-Taylor local systems.

\begin{propo} \label{prop-fbartau} 
(cf. \cite{h-t} proposition IV.2.2 and \S 2.4 of \cite{boyer-invent2}) \\
There is an isomorphism 
$G(\Am^{\oo,v}) \times P_{h,d}(F_v) \times W_{F_v}$-equivariant
$$\ind_{(D_{v,h}^\times)^0 \varpi_v^\Zm}^{D_{v,h}^\times} 
\bigl ( \hi^{h-d-i} \Psi_{\IC,\overline \Zm_l} \bigr )_{|X^{=h}_{\IC,\bar s,\overline{1_h}}} \simeq
\bigoplus_{\bar \tau \in \RC_{ \overline \Fm_l}(h)} \LC_{\overline \Zm_l,\overline{1_h}}
(\UC^{h-1-i}_{\bar \tau,\Nm}),$$
where
\begin{itemize}
\item $\LC_{\overline \Zm_l,\overline{1_h}}(\UC^{h-1-i}_{\bar \tau,\Nm})$ is the local
system over $X^{=h}_{\IC,\bar s,\overline{1_h}}$ associated with
$\UC^{h-1-i}_{\bar \tau,\Nm}$ viewed as a representation of $\DC_{v,h}^\times$,
cf. the remark before \ref{nota-rem};

\item $\RC_{ \overline \Fm_l}(h)$ is the set of equivalence classes of irreducible
$\overline \Fm_l$-representations of $D_{v,h}^\times$;

\item for $\bar \tau \in \RC_{ \overline \Fm_l}(h)$ and $V$ a 
$\overline \Zm_l$-representation of $D_{v,h}^\times$, then $V_{\bar \tau}$
denotes, cf. \cite{dat-torsion} \S B.2, the direct factor of $V$ whose irreducible
subquotients are isomorphic to a subquotient of $\bar \tau_{|\DC^\times_{v,h}}$
where $\DC_{v,h}$ is the maximal order of $D_{v,h}$.

\item With the previous notation, $\UC^i_{\bar \tau,\Nm}:= 
\big ( \UC^i_{F_v,\overline \Zm_l,d,n} \big )_{\bar \tau}$.

\item The matching at finite levels between the system indexed by $\IC$ and those by $\Nm$
is given by the map $m_v:\IC \longrightarrow \Nm$.
\end{itemize}
\end{propo}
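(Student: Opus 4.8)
The plan is to deduce the isomorphism from the description of the Newton strata of the KHT Shimura varieties in \cite{h-t}, upgraded to $\overline\Zm_l$-coefficients as in \cite{boyer-invent2} \S 2.4. The geometric input is that along the stratum $X^{=h}_{\IC,\bar s,\overline{1_h}}$ the universal Barsotti--Tate group fits into the connected--\'etale sequence $0\to\GC(h)^c\to\GC(h)\to\GC(h)^{et}\to 0$, with $\GC(h)^c$ a formal $\OC_{F_v}$-module of height $h$, and that, by Serre--Tate theory together with the crystalline rigidity of the connected part, the formal completion of $X_{I}$ along this stratum is formally locally the product of a component of the Lubin--Tate formal scheme $\widehat{\MC}_{LT,h,n}$ deforming $\GC(h)^c$ with a formally smooth factor of relative dimension $d-h$ which is itself formally \'etale over the Igusa variety built from $\GC(h)^{et}$. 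Only the Lubin--Tate direction carries the $\OC_{F_v}$-structure seen by the nearby-cycle functor $R\Psi_{\eta_v}$.

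I would then run Berkovich's vanishing-cycle formalism through this product. A K\"unneth argument identifies $R\Psi_{\eta_v}(\overline\Zm_l)$ restricted to the stratum with the external tensor product of the constant sheaf $\overline\Zm_l$ on the smooth factor and the Lubin--Tate nearby-cycle sheaf, whose degree-$j$ stalk cohomology is $\Psi^{j}_{F_v,\overline\Zm_l,h,n}$. Taking into account the normalisation $\Psi_{I,\Lambda}=R\Psi_{\eta_v,I}(\Lambda[d-1])(\tfrac{d-1}{2})$, the cohomology sheaf $\hi^{h-d-i}\Psi_{\IC}$ corresponds to $R^{h-1-i}\Psi_{\eta_v}$, i.e.\ to cohomological degree $h-1-i$ of the height-$h$ Lubin--Tate tower; the shift by $d-1$ and the Tate twist absorb the dimension $d-h$ of the smooth factor. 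Passing to the colimit over the level $n$ turns this into a smooth $GL_h(F_v)\times D_{v,h}^\times\times W_v$-module, and the functor $\LC_{\overline\Zm_l,\overline{1_h}}$ is, by construction, the passage from such a module to an equivariant local system on $X^{=h}_{\IC,\bar s,\overline{1_h}}$ via the Igusa tower. In this dictionary the $GL_h(F_v)$-action is absorbed into $D_{v,h}^\times$ through the Jacquet--Langlands-type compatibility built into $\LC$; the Hecke action of the residual $GL_{d-h}(F_v)$ on $\GC(h)^{et}$, together with the prime-to-$v$ Hecke action and the Galois action of $W_v$, gives the announced $G(\Am^{\oo,v})\times P_{h,d-h}(F_v)\times W_v$-equivariance; and the induction $\ind_{(D_{v,h}^\times)^0\varpi_v^\Zm}^{D_{v,h}^\times}$ on the left is the finite induction reconstituting the full $D_{v,h}^\times$-action from the piece living over the distinguished component $\overline{1_h}$, i.e.\ it collects the various height components $\widehat{\MC}^{(j)}_{LT,h,n}$.

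Finally I would split this $D_{v,h}^\times$-module into its $\bar\tau$-isotypic direct factors $\UC^{h-1-i}_{\bar\tau,\Nm}$ in the sense of \cite{dat-torsion} \S B.2 --- legitimate over $\overline\Zm_l$ because the block idempotents of $\overline\Zm_l[\DC_{v,h}^\times/(1+\PC_v)^N]$ are integral --- and match the system indexed by $\IC$ with the one indexed by $\Nm$ through $m_1:\IC\to\Nm$, obtaining the displayed direct sum over $\bar\tau\in\RC_{\overline\Fm_l}(h)$. The $\overline\Qm_l$-version of the statement is essentially \cite{h-t} Proposition IV.2.2, so the main difficulty is carrying everything out integrally: one needs the K\"unneth formula for nearby cycles with $\overline\Zm_l$-coefficients, the fact that the smooth factor really contributes a torsion-free constant sheaf concentrated in degree $0$, and the exactness and compatibility with reduction mod $l$ of both the $\bar\tau$-decomposition and the local-system functor $\LC_{\overline\Zm_l,\overline{1_h}}$; the requisite integral inputs are supplied by \cite{boyer-invent2} \S 2.4 and by the block theory of $\overline\Zm_l[D_{v,h}^\times]$-modules.
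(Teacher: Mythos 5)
The paper gives no proof of this proposition: it is recalled as known from \cite{h-t} Proposition IV.2.2 and \S 2.4 of \cite{boyer-invent2}, and your outline — Serre--Tate theory giving the product structure of the formal completion along $X^{=h}_{\IC,\bar s,\overline{1_h}}$ into a Lubin--Tate factor for $\GC(h)^c$ and a formally smooth Igusa direction, K\"unneth for Berkovich nearby cycles, the degree bookkeeping through the shift $[d-1]$, passage to the limit over $n$ matched by $m_1:\IC\to\Nm$, and the integral $\bar\tau$-block decomposition of \cite{dat-torsion} \S B.2 — is precisely the argument of those references, so your reconstruction is essentially the intended proof. One bookkeeping slip worth correcting: the $GL_h(F_v)$-action is not \emph{absorbed into} $D_{v,h}^\times$ via some Jacquet--Langlands compatibility in $\LC$; the two actions on Lubin--Tate cohomology are independent and commute, and the $GL_h$-block of the Levi of $P_{h,d-h}(F_v)$ acts on the coefficient module $\UC^{h-1-i}_{\bar\tau,\Nm}$ directly through the Drinfeld level structures on $\GC(h)^c$, while only the $D_{v,h}^\times$-module structure is used to form the local system $\LC_{\overline \Zm_l,\overline{1_h}}$ on the Igusa tower.
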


\rem for $\bar \tau \in \RC_{\overline \Fm_l}(h)$, and a lifting $\tau$
which by Jacquet-Langlands correspondence can be written $\tau \simeq \pi[t]_D$
for $\pi$ irreductible cuspidal, let $\varrho \in \scusp_{\overline \Fm_l}(g)$
be in the supercuspidal support. Then the inertial class of $\varrho$ depends only on
$\bar \tau$ and we will use the following notation.

\begin{nota}
With the previous notation, we write $V_\varrho$ for $V_{\bar \tau}$. 
\end{nota}

The description of the various filtration from the previous section
applied to $\Psi_{\IC,\overline \Qm_l}$ is given in \cite{boyer-torsion} \S 3.4.
Over $\overline \Zm_l$, first note that
$\Psi_{\IC, \overline \Zm_l}$ is an object of $\FC(X_{\IC,\bar s}, \overline \Zm_l)$.
Indeed, by \cite{ast} proposition 4.4.2,
$\Psi_{\IC, \overline \Zm_l}$ is an object of 
$\lexp p \DC^{\leq 0}(X_{\IC,\bar s},\overline \Zm_l)$. By
\cite{ill} variant 4.4 of theorem 4.2, we have 
$D \Psi_{\IC, \overline \Zm_l} \simeq \Psi_{\IC, \overline \Zm_l}$, so that
$$\Psi_{\IC, \overline \Zm_l} \in \lexp p \DC^{\leq 0}(X_{\IC,\bar s},\overline \Zm_l) \cap 
\lexp {p+} \DC^{\geq 0}(X_{\IC,\bar s},\overline \Zm_l)=\FC(X_{\IC,\bar s}, \overline \Zm_l).$$

We can then deduce from the description of the filtrations of $\Psi_{\IC,\overline \Qm_l}$
the same sort of description except that first
we have no control on the bimorphism $\lexp p j^{= h}_{!*} j^{= h,*} \grr^k(L) 
\htarrow_+ \grr^k(L)$ and secondly all the contribution relatively to irreducible cuspidal
$\overline \Qm_l$-representations should be considered altogether.
About this last point, we have the following result.

\begin{propo} \label{prop-decomposition}
We have a decomposition
$$\Psi_\IC \simeq \bigoplus_{g=1}^d \bigoplus_{\varrho \in \scusp_{\overline \Fm_l}(g)} \Psi_{\varrho}$$
with $\Psi_\varrho \otimes_{\overline \Zm_l} \overline \Qm_l \simeq 
\bigoplus_{\pi_v \in \cusp(\varrho)} \Psi_{\pi_v}$ where
the irreducible constituent of 
$\Psi_{\pi_v}$ are exactly the
perverse Harris-Taylor sheaves attached to $\pi_v$, i.e. with the notations of \ref{nota-rem},
the $P(\pi_v,t)(\frac{1-t+\delta}{2})$ with $1 \leq t \leq d/g$ and $0 \leq \delta \leq t-1$.
\end{propo}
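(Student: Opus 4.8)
The plan is to upgrade the known decomposition of $\Psi_{\IC,\overline\Qm_l}$ along Bernstein components at the place $v$ to an \emph{integral} direct sum decomposition of $\Psi_\IC$, by grouping together the components whose cuspidal support has the same modulo $l$ inertial supercuspidal support. First I would recall from \cite{boyer-invent2} that, over $\overline\Qm_l$, one has $\Psi_{\IC,\overline\Qm_l} \simeq \bigoplus_{\pi_v} \Psi_{\pi_v}$, the sum running over inertial classes of irreducible cuspidal $\overline\Qm_l$-representations $\pi_v$ of some $GL_g(F_v)$ with $g \mid d$, where the irreducible constituents of $\Psi_{\pi_v}$ are exactly the perverse Harris--Taylor sheaves $P(t,\pi_v)$ for $1 \le t \le d/g$. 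By Proposition~\ref{prop-red-modl}, the reduction $r_l(\pi_v)$ is irreducible cuspidal, hence of the shape $\st_s(\varrho)$ with $\varrho \in \scusp_{\overline\Fm_l}(g')$ and $sg'=g$ uniquely determined up to inertia; this partitions the set of the $\pi_v$ into the classes $\cusp(\varrho)$ of~\ref{nota-rhoi}, and gives a coarser decomposition
$$\Psi_{\IC,\overline\Qm_l} \simeq \bigoplus_{g'=1}^{d}\ \bigoplus_{\varrho \in \scusp_{\overline\Fm_l}(g')} \Big( \bigoplus_{\pi_v \in \cusp(\varrho)} \Psi_{\pi_v} \Big).$$
It then remains to show that the $\overline\Zm_l$-structure $\Psi_\IC \in \FC(X_{\IC,\bar s},\overline\Zm_l)$ respects the inner parentheses.

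For this I would use that $\Psi_\IC$, as a projective system over $\IC$, carries a smooth action of $GL_d(F_v)$ commuting with the $G(\Am^{\oo,v}) \times W_v$-action, so that the Bernstein centre of $\rep_{\overline\Zm_l}(GL_d(F_v))$ acts on it through its $\overline\Zm_l$-block idempotents. By Vignéras' description of these blocks (cf. \cite{vigneras-livre}), each is the union of the $\overline\Qm_l$-Bernstein components whose supercuspidal supports reduce to the same $\overline\Fm_l$-inertial class; hence for every pair $(g',\varrho)$ there is an idempotent $e_\varrho$, defined over $\overline\Zm_l$, cutting out precisely the components indexed by $\pi_v \in \cusp(\varrho)$. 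Setting $\Psi_\varrho := e_\varrho\cdot\Psi_\IC$ — still an object of $\FC(X_{\IC,\bar s},\overline\Zm_l)$, being a direct summand — the orthogonality and completeness of the $e_\varrho$ give $\Psi_\IC \simeq \bigoplus_{g',\varrho}\Psi_\varrho$, and base change to $\overline\Qm_l$ identifies $e_\varrho$ with the sum of the relevant $\overline\Qm_l$-idempotents, whence $\Psi_\varrho \otimes_{\overline\Zm_l}\overline\Qm_l \simeq \bigoplus_{\pi_v\in\cusp(\varrho)}\Psi_{\pi_v}$; the statement about the constituents of each $\Psi_{\pi_v}$ is then inherited from \cite{boyer-invent2}.

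The crux is the integrality of the idempotents $e_\varrho$, i.e. that different modulo $l$ supercuspidal supports genuinely do not interact over $\overline\Zm_l$; I expect this to be the main obstacle. It can be made concrete through Proposition~\ref{prop-fbartau}: the restriction of every cohomology sheaf $\hi^{h-d-i}\Psi_{\IC,\overline\Zm_l}$ to the Newton stratum $X^{=h}_{\IC,\bar s,\overline{1_h}}$ already decomposes along $\bar\tau \in \RC_{\overline\Fm_l}(h)$, and the assignment sending $\bar\tau$ to the inertial class $\varrho$ of the supercuspidal occurring in the support of a lift of $\bar\tau$ partitions $\RC_{\overline\Fm_l}(h)$; since the recollement gluing data between consecutive strata are governed by the very Lubin--Tate cohomology groups $\UC^{\bullet}_{\bar\tau,\Nm}$ appearing there, they too split along $\varrho$, so the stratawise decompositions are mutually compatible and assemble into the asserted global one. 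The remaining bookkeeping — matching a given $\pi_v$ to its $\cusp(\varrho)$ through the reduction formula of Theorem~\ref{theo-ss-quotient} together with Proposition~\ref{prop-red-modl} — is then routine.
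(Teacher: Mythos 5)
Your reduction of the proposition to the integrality question (no interaction over $\overline\Zm_l$ between pieces with distinct mod~$l$ supercuspidal supports) is the right framing, but the step meant to settle that question is exactly where your proof is missing. First, the claim that the integral block idempotents $e_\varrho$ of $\rep_{\overline\Zm_l}(GL_d(F_v))$ act on $\Psi_\IC$ is asserted, not established: $\Psi_\IC$ is an object of $\FC(X_{\IC,\bar s},\overline\Zm_l)$ on a projective system of varieties, and the only idempotent decomposition the paper has at its disposal (proposition \ref{prop-fbartau}, via \cite{dat-torsion} B.2) is for the $D_{v,h}^\times$-action on the cohomology sheaves restricted to a Newton stratum, not for the $GL_d(F_v)$-Bernstein centre acting on the perverse sheaf itself compatibly with the $t$-structure and recollement. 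Second, and more seriously, your fallback argument --- that the stratawise decompositions are ``mutually compatible'' because the gluing data ``too split along $\varrho$'' --- is circular: that splitting is precisely the content of the proposition. Over $\overline\Zm_l$, two free perverse sheaves whose $\overline\Qm_l$-fibers lie in disjoint components can perfectly well admit a non-split extension; the obstruction is a torsion perverse sheaf invisible over $\overline\Qm_l$, so nothing you have said rules it out.

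The paper supplies exactly this missing ingredient. It argues by induction on the filtration $\Fill^\bullet_!(\Psi_\IC)$ by strata; each graded piece decomposes along $\varrho$ because its restriction to the open stratum is the direct sum of the local systems $\LC_{\overline\Zm_l}(\varrho[t]_D)$ of proposition \ref{prop-fbartau}. To pass from the graded pieces to the filtration it then proves a splitting lemma: for an extension $A$ of free Harris--Taylor perverse sheaves $A_1$, $A_2$ of types $\varrho_1\neq\varrho_2$ not on the same Zelevinsky line, split over $\overline\Qm_l$, the obstruction $T$ of diagram (\ref{eq-prop-extension}) is torsion, and the Weil group acts on $T[l]$ isotypically for the mod~$l$ Galois representations attached to $\varrho_1$ and to $\varrho_2$ by the Langlands--Vign\'eras correspondence; these being distinct, $T=0$. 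Your proposal contains no substitute for this step. If you wish to keep the Bernstein-centre route, you would have to prove that the integral centre acts on $\Psi_\IC$ compatibly with recollement on the tower, which is a statement of essentially the same depth as the one you are trying to avoid.
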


\noindent \textit{Remark}:
the graded pieces $\gr^h_!(\Psi_{\varrho})$ of the previous filtration of stratification
of $\Psi_{\varrho}$ verify
$$j^{=h,*} \gr^h_!(\Psi_{\varrho}) \simeq 
\left \{ \begin{array}{ll} 0 & \hbox{if } g \nmid h \\
\LC_{\overline \Zm_l}(\varrho[t]_D) & \hbox{for } h=tg. \end{array} \right.$$

\begin{proof}
We argue by induction on $r$ to show that there exists a decomposition
$$\Fil^r_!(\Psi_\IC)= \bigoplus_{g=1}^d \bigoplus_{\varrho \in \scusp_{\overline \Fm_l}(g)}  
\Fil^r_{!,\varrho} (\Psi_\IC).$$
The case $r=0$ being trivial, we suppose it is true for $r-1$. From
$j^{= r,*} \gr^r_!(\Psi_\IC) \simeq  \bigoplus_{g|r=tg}
\bigoplus_{\varrho \in \scusp_{\overline \Fm_l}(g)} \LC_{\overline \Zm_l} (\varrho[t]_D)$,
we obtain
$$\gr^r_!(\Psi_\IC) \simeq \bigoplus_{g|r}\bigoplus_{\varrho \in \scusp_{\overline \Fm_l}(g)} 
\gr^r_{!,\varrho} (\Psi_\IC)$$
with $j^{= r}_! \LC_{\overline \Zm_l}(\varrho[t]_D)[d-r] \twoheadrightarrow \gr^r_{!,\varrho} (\Psi_\IC)$
where the irreducible constituents of $\gr^r_{!,\varrho} (\Psi_\IC)
\otimes_{\overline \Zm_l} \overline \Qm_l$ are of type $\varrho$.

Consider two free perverse sheaves $A_1$ and $A_2$ and let $A$ be an extension 
$$0 \rightarrow A_1 \longrightarrow A \longrightarrow A_2 \rightarrow 0,$$
supposed to be split over $\overline \Qm_l$. Denote then the pull back $A'_2$ 
$$\xymatrix{
A'_2 \ar@{^{(}-->}[r] \ar@{^{(}-->}[d] & A \ar@{^{(}->}[d] \\
A_2\otimes_{\overline \Zm_l} \overline \Qm_l  \ar@{^{(}->}[r] &  A  \otimes_{\overline \Zm_l} \overline \Qm_l 
}$$
so that
\addtocounter{thm}{1}
\begin{equation} \label{eq-prop-extension}
\xymatrix{
 & A_1 \ar@{^{(}->}[d] \ar@{=}[r] & A_1 \ar@{^{(}->}[d] \\
 A'_2 \ar@{^{(}->}[r] \ar@{=}[d] & A \ar@{->>}[r] \ar@{->>}[d] & A'_1 \ar@{->>}[d]  \\
 A'_2 \ar@{^{(}->}[r] & A_2 \ar@{->>}[r] & T  \\
}
\end{equation}
where $T$ is the common cokernel of $A_1 \hookrightarrow A'_1$
and $A'_2 \hookrightarrow A_2$. Then $T=0$ if and only the extension $A$ is split.
Now suppose that $A_1$ (resp. $A_2$) is a Harris-Taylor perverse sheaf
of type $\varrho_1$ (resp. $\varrho_2$) with $\varrho_1$ and $\varrho_2$ 
not belonging to the same Zelevinsky line. Then the action of the Weil group on
$T[l]$ seen as a quotient of $A'_1$ (resp. of $A_2$) is isotypic relatively to 
the galois representation associated with $\varrho_1$ (resp. $\varrho_2$)
by the Langlands-Vigneras correspondence, which imposes that $T=0$.

By applying this general remark to $\gr^r_{!,\varrho_2}(\Psi_\IC)$, we conclude it is
in a direct sum with $\Fil^{r-1}_{!,\varrho_1}(\Psi_\IC)$, which, by varying  
$\varrho_1$ and $\varrho_2$, proves the result.
\end{proof}

In order to understand the next computations on $\Psi_\IC$, it might be useful for the reader to recall
the following description in the Grothendieck group of $\Psi_{\pi_v}$ given in \cite{boyer-invent2}
\addtocounter{thm}{1}
\begin{equation} \label{eq-psi-groth}
\bigl [ \Psi_{\pi_v} \bigr ]=\sum_{t=1}^{\lfloor \frac{d}{g} \rfloor } \sum_{k=0}^{t-1}
P(t,\pi_v)(\frac{1-t+2k}{2}),
\end{equation}
where $\pi_v$ is an irreducible cuspidal representation of $GL_g(F_v)$.
Then, cf. \cite{boyer-torsion} \S 3.4, 
the graded pieces $\gr^r_!(\Psi_{\pi_v})$ are zero if $r \not \in \{ g,2g,\cdots, \lfloor \frac{d}{g} \rfloor g \}$
and otherwise its image in the Grothendieck group is
\addtocounter{thm}{1}
\begin{equation} \label{eq-psi-groth2}
\bigl [ \gr^{kg}_! \bigl ( \Psi_{\pi_v} \bigr ) \bigr ]=\sum_{t=k}^{\lfloor \frac{d}{g} \rfloor } 
P(t,\pi_v)(\frac{1+t-2k}{2}).
\end{equation}
In particular for $k=1$, then 
\addtocounter{thm}{1}
\begin{equation} \label{eq-psi-groth3}
\bigl [ \gr^{g}_! \bigl ( \Psi_{\pi_v} \bigr ) \bigr ]=\sum_{t=1}^{\lfloor \frac{d}{g} \rfloor } 
P(t,\pi_v)(\frac{t-1}{2}).
\end{equation}

\subsection{Filtrations with the use of $j_{\neq c}$}

Denote by
$$\bar j: X_{\IC,\bar \eta} \hookrightarrow \overline X_{\IC} \hookleftarrow X_{\IC,\bar s}: \bar i,$$
and consider the following $t$-structure on 
$\overline X_{\IC}:=X_{\IC} \times_{\spec \OC_v} \spec \overline \OC_v$ obtained by
glueing
$$\Bigl ( \lexp p D^{\leq -1}(X_{\IC,\overline \eta},\overline \Zm_l), 
\lexp p D^{\geq -1}(X_{\IC,\overline \eta},\overline \Zm_l) \Bigr ) \quad \hbox{and} \quad
\Bigl ( \lexp p D^{\leq 0}(X_{\IC,\overline s},\overline \Zm_l), 
\lexp p D^{\geq 0}(X_{\IC,\overline s},\overline \Zm_l) \Bigr ).$$
The functors $\bar j_!$ and $\bar j_*=\lexp p {\bar j_{!*}}$ are then $t$-exact with
$$0 \rightarrow \Psi_{\IC} \longrightarrow \bar j_! \overline \Zm_l[d-1](\frac{d-1}{2}) 
\longrightarrow \bar j_* \overline \Zm_l[d-1](\frac{d-1}{2}) \rightarrow 0.$$

Consider now a pure stratum $X^{\geq 1}_{\IC,\bar s,c}$. Note then that
the morphism $\bar j_{\neq c}:\overline X_{\IC} \setminus 
X^{\geq 1}_{\IC,\bar s,c} \hookrightarrow \overline X_{\IC}$ is affine, cf. \cite{boyer-FT} 
beginning of \S 7.

\begin{lemma} \label{lem-icpsi}
The perverse sheaf $\Psi_c:=i^1_{c,*} \lexp p \hi^0 i^{1,*}_{c} \bigl ( \Psi_{\IC} \bigr )$ is free.
\end{lemma}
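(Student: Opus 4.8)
The plan is to realise $\Psi_c$ as a cokernel of a morphism of free perverse sheaves and then to prove that this cokernel is torsion‑free, the first part being purely formal (affineness of $\bar j_{\neq c}$ plus lemma \ref{lem-libre0}) and the second part requiring the explicit geometry along the Newton strata. Since $\Psi_{\IC}$ is free perverse on $X_{\IC,\bar s}$ and $\bar i_*$ is $t$‑exact for both the $\lexp p$‑ and the $\lexp {p+}$‑perverse $t$‑structure on $\overline X_{\IC}$ and preserves torsion‑freeness, $\bar i_*\Psi_{\IC}\in\FC(\overline X_{\IC},\overline\Zm_l)$; and since $\bar j_{\neq c}$ is affine, $\bar j_{\neq c,!}$ is exact for both $t$‑structures and preserves free perverse sheaves, so $\bar j_{\neq c,!}\bar j^*_{\neq c}\bar i_*\Psi_{\IC}$ is again free perverse. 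Applying $\lexp p\hi^{\bullet}$ to the localisation triangle
$$\bar j_{\neq c,!}\bar j^*_{\neq c}\bar i_*\Psi_{\IC}\longrightarrow\bar i_*\Psi_{\IC}\longrightarrow i^1_{c,*}i^{1,*}_c\Psi_{\IC}\leadsto$$
and using that its first two terms are perverse, one obtains, exactly as in the proof of lemma \ref{lem-libre0}, that $i^1_{c,*}\lexp p\hi^{-\delta}i^{1,*}_c\Psi_{\IC}=0$ for $\delta\neq 0,1$, that $i^1_{c,*}\lexp p\hi^{-1}i^{1,*}_c\Psi_{\IC}$ is free (being a subobject of a free perverse sheaf), and that $\Psi_c=i^1_{c,*}\lexp p\hi^0 i^{1,*}_c\Psi_{\IC}$ is the cokernel, in $\lexp p\CC(\overline X_{\IC})$, of the adjunction map $\bar j_{\neq c,!}\bar j^*_{\neq c}\bar i_*\Psi_{\IC}\to\bar i_*\Psi_{\IC}$. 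Dualising this triangle — which is legitimate because $D\Psi_{\IC}\simeq\Psi_{\IC}$ and $\bar j_{\neq c}$ is affine — one likewise gets that $i^1_{c,*}\lexp p\hi^0 i^{1,!}_c\Psi_{\IC}$ is free, the symmetric companion of the previous statement.

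The one point not settled by this formalism is the torsion‑freeness of $\lexp p\hi^0 i^{1,*}_c\Psi_{\IC}$: the cokernel above is a priori only a quotient of a free perverse sheaf, lemma \ref{lem-libre0} is deliberately silent about the middle perverse degree, and all the obvious reformulations — saturation of the image of the adjunction map, torsion‑freeness of $\lexp p\hi^1 i^{1,!}_c\Psi_{\IC}$, and so on — are tautologically equivalent to the statement itself. Hence the proof has to appeal to the geometry. By proposition \ref{prop-fbartau} the restriction of $\Psi_{\IC}$ to each stratum $X^{=h}_{\IC,\bar s,a}$ with $a\supset c$ is, up to shift and twist, a direct sum of Harris–Taylor local systems built from the Lubin–Tate cohomology groups $\UC^{\bullet}_{\bar\tau,\Nm}$; these are free by the main result of \cite{boyer-duke}, so every ordinary cohomology sheaf of $i^{1,*}_c\Psi_{\IC}$ is $\overline\Zm_l$‑flat.

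I would then analyse $\lexp p\hi^0 i^{1,*}_c\Psi_{\IC}$ through the stratification filtration of $\Psi_{\IC}$ exactly as in the proof of lemma \ref{lem-j-c}, using the description of $j^{=h}_{a,!}HT_a(\pi_v,\st_t(\pi_v))$ in the Grothendieck group from \cite{boyer-invent2} 4.5.1 and the mirabolic restriction identities of lemma \ref{lem-mirabolique}; this exhibits $\Psi_c$ as a successive extension of free perverse sheaves supported on the $X^{\geq h}_{\IC,\bar s,c}$ and obtained as middle extensions of free Harris–Taylor local systems (free over $\overline\Zm_l$ by \cite{boyer-duke}). Since an extension of a free perverse sheaf by a free perverse sheaf is again free — a torsion subobject of the middle term meets neither the sub nor the quotient, hence vanishes — one concludes that $\Psi_c$ is free. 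The main obstacle is exactly this last step: the perverse‑sheaf bookkeeping only controls the off‑middle cohomological degrees, so the genuine content, namely the freeness of the Lubin–Tate cohomology and of the relevant Harris–Taylor middle extensions together with the identification of the graded pieces of $\Psi_c$, must be imported from \cite{boyer-invent2} and \cite{boyer-duke}.
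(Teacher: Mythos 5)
Your formal first half matches the paper's setup, but your diagnosis of the second half is where the proposal goes wrong: you assert that the torsion-freeness of $\lexp p \hi^0 i^{1,*}_{c} \Psi_{\IC}$ is out of reach of the perverse formalism and must be imported from the geometry (proposition \ref{prop-fbartau}, a Harris--Taylor filtration of $\Psi_c$, and the main theorem of \cite{boyer-duke}). The paper closes exactly this gap by a purely formal degree shift, which is the idea you are missing. Set $\overline F:=\bar j_*\overline \Zm_l[d-1](\frac{d-1}{2})$ on $X_{\IC}$, so that $\Psi_{\IC}\simeq \lexp p \hi^{-1}\bar i^{*}\overline F$. The spectral sequence $E_2^{r,s}=\lexp p \hi^{r} i^{1,*}_{c}\bigl(\lexp p \hi^{s}\bar i^{*}\overline F\bigr)\Rightarrow \lexp p \hi^{r+s}\bar i^{*}_{c}\overline F$ degenerates because $\lexp p \hi^{s}\bar i^{*}\overline F$ vanishes for $s\neq -1$ (vanishing for $s<-1$ by affineness of $\bar j$ and lemma \ref{lem-libre0}, and for $s=0$ because $\bar j_!\bar j^{*}\overline F\twoheadrightarrow \overline F$), giving $\lexp p \hi^{0} i^{1,*}_{c}\Psi_{\IC}\simeq \lexp p \hi^{-1}\bar i^{*}_{c}\overline F$. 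Now lemma \ref{lem-libre0}, applied this time to $\overline F$ and the affine open immersion $\bar j_{\neq c}:X_{\IC}\setminus X^{\geq 1}_{\IC,\bar s,c}\hookrightarrow X_{\IC}$, says precisely that the degree $-1$ term $\lexp p \hi^{-1}\bar i^{*}_{c}\overline F$ is free. The ``uncontrolled middle degree'' for $\Psi_{\IC}$ becomes the controlled degree $-1$ for $\overline F$, and no geometric input is needed.

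Beyond missing this idea, the fallback route you propose is problematic inside the paper's architecture. The author explicitly postpones any use of \cite{boyer-duke} until the last section, so invoking the freeness of the $\UC^{\bullet}_{\bar\tau,\Nm}$ here defeats that purpose. Worse, the filtration of $\Psi_c$ by (essentially) intermediate extensions of Harris--Taylor local systems that you want to use is the content of proposition \ref{prop-filtration1} and of the discussion around notation \ref{nota-lattice}; these come after the present lemma and need the perversity and freeness of $\Psi_c$ already in hand to write down the short exact sequences (\ref{eq-sec-psi}) and (\ref{eq-sec2-psi}). As written, your argument is therefore circular within the paper, and even if repaired by leaning entirely on external results it would replace a short formal observation by a dependence on a deep theorem the paper is deliberately avoiding.
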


\begin{proof}
Let 
$\overline F:=\bar j_*\overline \Zm_l[d-1](\frac{d-1}{2})=
\bar j_{!*}\overline \Zm_l[d-1](\frac{d-1}{2})$
over $\overline X_{\IC}$.
Denote by $i_c^1:X^{\geq 1}_{\IC,\bar s,c} \hookrightarrow X^{\geq 1}_{\IC,\bar s}$, and
$\bar i_c:=\bar i \circ i_c^{\geq 1}$.
As $\Psi_{\IC}=\lexp p \hi^{-1} \bar i^* \bar j_* \overline \Zm_l[d-1](\frac{d-1}{2})$, we have to prove
that $i_c^{1,*} \lexp p \hi^{-1} \bar i^* \overline F$ is perverse for the $t$-structures $p$ and $p+$.
Consider the spectral sequence
$$E_2^{r,s}=\lexp p \hi^r i^{1*}_{c} \Bigl ( \lexp p \hi^s  \bar i^* \overline F \Bigr ) \Rightarrow 
\lexp p \hi^{r+s} \bar i^*_{c} \overline F.$$
As $\bar j$ is affine, by lemma \ref{lem-libre0}, we know that $\lexp p \hi^s \bar i^{*} \overline F$ is
trivial for $s<-1$. The epimorphism $\bar j_! \bar j^{*} \overline F \twoheadrightarrow \overline F$, 
gives also that $\lexp p \hi^0 \bar i^{*} \overline F=0$ so that the previous spectral sequence
degenerates at $E_2$ with
$$\lexp p \hi^r \bar i^{*}_{c} \overline F \simeq \lexp p \hi^{r+1} i^{1,*}_{c}
\bigl ( \lexp p \hi^{-1} \bar i^{*}\overline F \bigr ).$$
In the same way as $\bar j_{\neq c}:X_{\IC} \setminus X^{\geq 1}_{\IC,\bar s,c} 
\hookrightarrow X_{\IC}$ is affine, then, by lemma \ref{lem-libre0},
$\lexp p \hi^r \bar i^{*}_{c} \overline F$ is trivial for $r<-1$ and free for $r=-1$
which finishes the proof.
\end{proof}

The decomposition of \ref{prop-decomposition} gives
$\Psi_c \simeq \bigoplus_{g=1}^d\bigoplus_{\varrho \in \scusp_{\overline \Fm_l}(g)} 
\Psi_{\varrho,c}$. 
For any $\varrho \in \scusp_{\overline \Fm_l}(g)$ 
we then have the following short exact sequence of free perverse sheaves
\addtocounter{thm}{1}
\begin{equation} \label{eq-sec-psi}
0 \rightarrow j_{\neq c,!} j^{*}_{\neq c} \Psi_{\varrho}
\longrightarrow \Psi_{\varrho} \longrightarrow \Psi_{\varrho,!,c} \rightarrow 0,
\end{equation}
where $j_{\neq c}:X^{\geq 1}_{\IC,\bar s} \setminus X^{\geq 1}_{\IC,\bar s,c} \hookrightarrow
X^{\geq 1}_{\IC,\bar s}$.

\rem applied to $\Psi_{\pi_v}$, the equality (\ref{eq-psi-groth}) becomes, cf. \cite{boyer-torsion} \S 3.4
\addtocounter{thm}{1}
\begin{equation} \label{eq-psi-groth4}
\bigl [ \Psi_{\pi_v,!,c} \bigr ]=\sum_{t=1}^{\lfloor \frac{d}{g} \rfloor } 
P(t,\pi_v)_c(\frac{1-t}{2}).
\end{equation}

Consider the filtration of stratification
$$0=\Fil_{*}^{-d}(\Psi_{\varrho,!,c} ) \subset \Fil_{*}^{1-d}(\Psi_{\varrho,!,c} ) 
\subset \cdots \subset \Fil_{*}^0(\Psi_{\varrho,!,c} )=\Psi_{\varrho,!,c}.$$

\begin{propo} \label{prop-filtration1}
The graded pieces $\gr^h_*(\Psi_{\varrho,!,c})$ verify the following properties
\begin{itemize}
\item it is trivial if $h$ is not equal to some$-g_i(\varrho)+1 > -d$ for $i \geq -1$;

\item for such $i \geq -1$ with $g_i(\varrho) \leq d$, then
$$\gr^{-g_i(\varrho)+1}_*(\Psi_{\varrho,!,c}) \otimes_{\overline \Zm_l} \overline \Qm_l \simeq 
\bigoplus_{\pi_v \in \cusp(\varrho,i)} \gr^{-g_i(\varrho)+1}_*(\Psi_{\pi_v,!,c})$$
where $\gr^{-g_i(\varrho)+1}_*(\Psi_{\pi_v,!,c})$ is the push forward
$$\xymatrix{
\Psi_{\pi_v} \ar@{->>}[r] \ar@{->>}[d] & \Psi_{\pi_v,!,c} \ar@{-->>}[d] \\
\coFil_{*,g_i(\varrho)}(\Psi_{\pi_v}) \ar@{-->>}[r] & \gr^{-g_i(\varrho)+1}_*(\Psi_{\pi_v,!,c}).
}$$
\end{itemize}
\end{propo}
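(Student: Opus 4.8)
The plan is to bootstrap everything from the description over $\overline{\Qm}_l$, using freeness to reduce to the generic fibre and the decomposition of Proposition \ref{prop-decomposition} to isolate a single $\Psi_{\pi_v,!,c}$ with $\pi_v\in\cusp(\varrho,i)$.

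First I would observe that, $j_{\neq c}$ being affine, $j_{\neq c,!}$ is $t$-exact, so \eqref{eq-sec-psi} is a short exact sequence of free perverse sheaves and $\Psi_{\varrho,!,c}\in\FC(X_{\IC,\bar s},\overline{\Zm}_l)$. The filtration of stratification $\Fil^{-\bullet}_*$ is assembled from $\coim_\FC$, $\ker_\FC$ and the functors $\lexp p j^{1\leq r}_*$, all compatible with the flat base change $-\otimes_{\overline{\Zm}_l}\overline{\Qm}_l$ on free perverse sheaves, so $\gr^h_*(\Psi_{\varrho,!,c})\otimes_{\overline{\Zm}_l}\overline{\Qm}_l\simeq\gr^h_*(\Psi_{\varrho,!,c}\otimes_{\overline{\Zm}_l}\overline{\Qm}_l)$; and since $j_{\neq c,!}j^{*}_{\neq c}$ and $-\otimes_{\overline{\Zm}_l}\overline{\Qm}_l$ are exact on free perverse sheaves and commute with direct sums, Proposition \ref{prop-decomposition} gives $\Psi_{\varrho,!,c}\otimes_{\overline{\Zm}_l}\overline{\Qm}_l\simeq\bigoplus_{\pi_v\in\cusp(\varrho)}\Psi_{\pi_v,!,c}$, where $\Psi_{\pi_v,!,c}:=\Psi_{\pi_v}/j_{\neq c,!}j^{*}_{\neq c}\Psi_{\pi_v}$, compatibly with $\gr^h_*$. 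Thus the proposition reduces to proving, for every $i\geq-1$ with $g_i(\varrho)\leq d$ and every $\pi_v\in\cusp(\varrho,i)$ (so $\Psi_{\pi_v}\neq0$, the case $g_i(\varrho)>d$ being vacuous), that the $*$-graded pieces of $\Psi_{\pi_v,!,c}$ vanish outside degree $-g_i(\varrho)+1$ and realise there the push-out of the statement.

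Next I would pin down the support. The irreducible constituents of $\Psi_{\pi_v}$ over $\overline{\Qm}_l$ are the perverse Harris--Taylor sheaves $P(t,\pi_v)$, $1\leq t$ with $tg_i(\varrho)\leq d$, each supported on $X^{\geq tg_i(\varrho)}_{\IC,\bar s}$ (Proposition \ref{prop-decomposition}), hence $\Psi_{\pi_v}$ is supported on $X^{\geq g_i(\varrho)}_{\IC,\bar s}$; and since $j_{\neq c,!}$ is extension by zero off $X^{\geq1}_{\IC,\bar s,c}$, the triangle \eqref{eq-sec-psi} gives $i_z^*\Psi_{\pi_v,!,c}\simeq i_z^*\Psi_{\pi_v}$ for $z\in X^{\geq1}_{\IC,\bar s,c}$ and $0$ otherwise. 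So $\Psi_{\pi_v,!,c}$ is supported on $X^{\geq g_i(\varrho)}_{\IC,\bar s,c}:=X^{\geq g_i(\varrho)}_{\IC,\bar s}\cap X^{\geq1}_{\IC,\bar s,c}$, with top stratum $X^{=g_i(\varrho)}_{\IC,\bar s,c}$; consequently $j^{1\leq r,*}\Psi_{\pi_v,!,c}=0$ and $\Fil^{-r}_*(\Psi_{\pi_v,!,c})=\Psi_{\pi_v,!,c}$ for $r\leq g_i(\varrho)-1$, which already yields $\gr^h_*(\Psi_{\pi_v,!,c})=0$ for $h>-g_i(\varrho)+1$. It remains to prove $\Fil^{-g_i(\varrho)}_*(\Psi_{\pi_v,!,c})=0$, i.e. that $\Psi_{\pi_v,!,c}$ has no nonzero subobject supported on $X^{\geq g_i(\varrho)+1}_{\IC,\bar s,c}$, equivalently that the adjunction unit $\Psi_{\pi_v,!,c}\to\lexp p j^{=g_i(\varrho)}_{c,*}j^{=g_i(\varrho),*}_c\Psi_{\pi_v,!,c}$ is a monomorphism. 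Granting it, $\gr^{-g_i(\varrho)+1}_*(\Psi_{\pi_v,!,c})=\Psi_{\pi_v,!,c}=\CoFil_{*,g_i(\varrho)}(\Psi_{\pi_v,!,c})$; functoriality of $\CoFil_{*,g_i(\varrho)}$ for $\Psi_{\pi_v}\twoheadrightarrow\Psi_{\pi_v,!,c}$ gives a surjection $\coFil_{*,g_i(\varrho)}(\Psi_{\pi_v})\twoheadrightarrow\gr^{-g_i(\varrho)+1}_*(\Psi_{\pi_v,!,c})$, while the same vanishing forces $\Fil^{-g_i(\varrho)}_*(\Psi_{\pi_v})\subseteq j_{\neq c,!}j^{*}_{\neq c}\Psi_{\pi_v}$ (the image in $\Psi_{\pi_v,!,c}$ of that deeper subobject being supported on $X^{\geq g_i(\varrho)+1}_{\IC,\bar s,c}$, hence zero); so the square of the statement is precisely the push-out of the two quotients $\Psi_{\pi_v}\twoheadrightarrow\Psi_{\pi_v,!,c}$ and $\Psi_{\pi_v}\twoheadrightarrow\coFil_{*,g_i(\varrho)}(\Psi_{\pi_v})$. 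Re-assembling over $\cusp(\varrho,i)$ and over $i$, and using that $\pi_v\in\cusp(\varrho,i)$ contributes only in degree $-g_i(\varrho)+1$, gives the proposition.

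The hard part is the vanishing $\Fil^{-g_i(\varrho)}_*(\Psi_{\pi_v,!,c})=0$: nothing of $\Psi_{\pi_v,!,c}$ on the deeper strata escapes the $*$-extension from $X^{=g_i(\varrho)}_{\IC,\bar s,c}$. I would prove it by a stalk computation modelled on the proof of Lemma \ref{lem-j-c}. Restricting the filtration of $R\Psi$ to $X^{\geq g_i(\varrho)}_{\IC,\bar s,c}$, the graded pieces become the intermediate extensions $\lexp p j^{=tg_i(\varrho)}_{c,!*}HT_c(\pi_v,\st_t(\pi_v))$ (up to twisting by $\Lm(\pi_v)$ and shifting); iterating Lemma \ref{lem-j-c}, each step replaces such a term by the $!$-extension of its local system away from the pure stratum being removed together with $\lexp p j^{=(t+1)g_i(\varrho)}_{\langle c,c'\rangle,!*}HT_{\langle c,c'\rangle}(\pi_v,\st_{t+1}(\pi_v)_{|P_{\langle c,c'\rangle}(F_v)})$, via the mirabolic identity $\st_t(\pi_v\{\frac{-1}{2}\})_{|P}\times\pi_v\{\frac{t}{2}\}\simeq\st_{t+1}(\pi_v)_{|P}$ of Lemma \ref{lem-mirabolique}. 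The outcome is that the stalk of $\Psi_{\pi_v,!,c}$ at a geometric point $z$ of a deeper stratum $X^{=h}_a$ with $c\subseteq a$ equals the stalk of $HT(\pi_v,\Pi)$ for the appropriate mirabolic-induced $\Pi\simeq LT_{\pi_v}(t,\delta-1)_{|P_a(F_v)}$, which by the identical computation is also the stalk at $z$ of $\lexp p j^{=g_i(\varrho)}_{c,*}j^{=g_i(\varrho),*}_c\Psi_{\pi_v,!,c}$; hence the adjunction unit above is a stalkwise isomorphism, in particular a monomorphism. The delicate book-keeping I expect to absorb most of the work is verifying that restriction to the pure closed substratum $X^{\geq g_i(\varrho)}_{\IC,\bar s,c}$ is compatible with the intermediate-extension description of the $P(t,\pi_v)$, so that Lemma \ref{lem-j-c} can be applied term by term.
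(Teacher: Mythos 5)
Your overall architecture is sound and close in spirit to the paper's: the reduction to a single $\pi_v \in \cusp(\varrho,i)$ via proposition \ref{prop-decomposition} and the exactness of $j_{\neq c,!}j^*_{\neq c}$, the support argument killing $\gr^h_*(\Psi_{\pi_v,!,c})$ for $h>-g_i(\varrho)+1$, and the identification that everything reduces to the single vanishing $\Fil^{-g_i(\varrho)}_*(\Psi_{\pi_v,!,c})=0$, after which the push-out description is formal, are all correct. The gap is in your proof of that vanishing. You propose to show that the adjunction unit $\Psi_{\pi_v,!,c}\to\lexp p j^{=g_i(\varrho)}_{c,*}j^{=g_i(\varrho),*}_c\Psi_{\pi_v,!,c}$ is a \emph{stalkwise isomorphism}, asserting that at a geometric point $z$ of a deeper stratum both sides have the stalk of a single $HT(\pi_v,\Pi)$ with $\Pi\simeq LT_{\pi_v}(t,\delta-1)_{|P_a(F_v)}$. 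That is the stalk of \emph{one graded piece} of $\Psi_{\pi_v,!,c}$, not of the whole object. By the stalk computation from \cite{boyer-invent2} which the paper itself invokes, at $z\in X^{=tg_i(\varrho)}_{\IC,\bar s,c}$ the germs of the cohomology sheaves of $\Psi_{\pi_v,!,c}$ are nonzero in $t$ distinct degrees $tg_i(\varrho)-d+k-t$ for $1\leq k\leq t$ (one for each graded piece, the relevant spectral sequence degenerating at $E_1$ for weight reasons), whereas $j^{=g_i(\varrho)}_{c,*}$ applied to the single local system $j^{=g_i(\varrho),*}_c\Psi_{\pi_v,!,c}$, essentially $HT_c(\pi_v,\pi_v)$ in one perverse degree, has germ at $z$ concentrated in the single degree corresponding to $k=1$. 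The two objects are therefore not stalkwise isomorphic as soon as $t\geq 2$, and your argument collapses: the unit should indeed be a monomorphism, but it is not an isomorphism, so no comparison of stalks with this target can establish injectivity.

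The paper's comparison object is different: it compares $\Psi_{\pi_v,!,c}=i^1_{c,*}\lexp p\hi^0 i^{1,*}_c\Psi_{\pi_v}$ with $i^1_{c,*}\lexp p\hi^0 i^{1,*}_c\bigl(\coFil_{*,g_i(\varrho)}(\Psi_{\pi_v})\bigr)$, i.e. the image under $(\cdot)_{!,c}$ of the first cofiltration step of $\Psi_{\pi_v}$ itself, whose $!$-graded pieces $\PC(t,\pi_v)(\frac{1-t}{2})$ are known from the earlier work. Between these two there is an evident epimorphism, and both have stalks spread over the same $t$ degrees at $z$, matching term by term through the identification $\bigl(\st_k(\pi_v\{\frac{k-t}{2}\})\bigr)_{|P_{1,kg-1}(F_v)}\times\speh_{t-k}(\pi_v\{\frac{k}{2}\})\simeq LT_{\pi_v}(k,t-1-k)_{|P_{1,tg-1}(F_v)}$ of lemma \ref{lem-mirabolique}; an epimorphism with equal stalks is an isomorphism. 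The first bullet and the fine structure are then read off from the auxiliary lemma describing the $!$-filtration of $i^1_{c,*}\lexp p\hi^0 i^{1,*}_c\Fil^r_!\bigl(\coFil_{*,g_i(\varrho)}(\Psi_{\pi_v})\bigr)$, with graded pieces $\ind_{P_{1,h-1,d-h}(F_v)}^{P_{1,d-1}(F_v)}\PC(t,\pi_v)_{\overline{1_h}}(\frac{1-t}{2})$. To repair your proof you would have to substitute this comparison object for the full $*$-extension from the bottom stratum.
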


\rem from \cite{boyer-FT} proposition 7.1, the graded pieces 
$\gr^h_! \bigl ( \gr^{-g_i(\varrho)+1}_*(\Psi_{\pi_v,!,c}) \bigr )$
of the filtration of stratification are then
\begin{itemize}
\item trivial if $h$ is not of the shape $tg_i(\varrho) \leq d$,

\item and for $h=tg_i(\varrho) \leq d$, we have, if we consider for simplicity $c=\overline{1_1}$
$$\gr^{tg_i(\varrho)}_! \bigl ( \gr^{-g_i(\varrho)+1}_*(\Psi_{\pi_v,!,c}) \bigr )\simeq
\ind_{P_{1,h,d}(F_v)}^{P_{1,d}(F_v)} 
P(t,\pi_v)_{\overline{1_h}}(\frac{1-t}{2}).$$
\end{itemize}
Proposition 7.1 of \cite{boyer-FT} only deals with the Grothendieck group and not with the filtration,
which is the main point of our proposition here.

\begin{proof}
As explained in the remark before \S \ref{para-dec-psi}, as long as the statement do not speak
about the lattices and the positions, then it is only a statement on $\Psi_{\pi_v}$ and the precise description 
of $\Psi_{\pi_v}$.
For simplicity we suppose $c=\overline{1_1}$.
From \cite{boyer-torsion}, the graded piece
$\gr^h_! \bigl ( \coFil_{*,g_i(\varrho)}(\Psi_{\pi_v}) \bigr )$
of the filtration $\Fil^\bullet_! \bigl ( \coFil_{*,g_i(\varrho)}(\Psi_{\pi_v}) \bigr )$
are trivial for all $h\neq tg_i(\varrho) \leq d$ and
$$\gr^{tg_i(\varrho)}_! \bigl ( \coFil_{*,g_i(\varrho)}(\Psi_{\pi_v}) \bigr ) \simeq 
P(t,\pi_v)(\frac{1-t}{2}).$$

\begin{lemma} For $0 \leq r \leq d$, the 
$$\gr^h_! \Bigl ( i^{1}_{c,*} \lexp p \hi^0 i^{1,*}_{c}\Fil^r_! 
\bigl (\coFil_{*,g_i(\varrho)}(\Psi_{\pi_v}) \bigr ) \Bigr )$$
verify the following properties
\begin{itemize}
\item they are trivial if $r<g_i(\varrho)$;

\item for $tg_i(\varrho) \leq r < (t+1)g_i(\varrho)$, they are trivial if $h \neq tg_i(\varrho)$;

\item otherwise, for $a \in GL_d(F_v)/P_{h,d}(F_v)$ such that
$c \subset a$, it is isomorphic to
$$\ind_{P_{c \subset a}(F_v)}^{P_{c}(F_v)} P(t,\pi_v)_{a}(\frac{1-t}{2}).$$
\end{itemize}
\end{lemma}

\rem in the last point with $h=tg_i(\varrho)$, 
for $c=\overline{1_1}$ and $a=\overline{1_h}$ the formula is
$$\ind_{P_{1,h,d}(F_v)}^{P_{1,d}(F_v)} P(t,\pi_v)_{\overline{1_h}}(\frac{1-t}{2}).$$

\begin{proof}
Note first  that the statement is trivially true for $r < g_i(\varrho)$. Recall moreover that 
$$i^{1}_{c,*} \lexp p \hi^0 i^{1,*}_c P(t,\pi_v) \simeq P(t,\pi_v)_c:=
\ind_{P_{c \subset a}(F_v)}^{P_{c}(F_v)} P(t,\pi_v)_{a}(\frac{1-t}{2}),$$
where $a \in GL_d(F_v)/P_{tg_i(\varrho),d}(F_v)$ is such that $c \subset a$.
We then argue by induction through the short exact sequence
\begin{multline*}
0 \rightarrow \Fil^{r-1}_! \bigl (\coFil_{*,g_i(\varrho)}(\Psi_{\pi_v}) \bigr ) \Bigr ) \longrightarrow
\Fil^r_! \bigl (\coFil_{*,g_i(\varrho)}(\Psi_{\pi_v}) \bigr ) \Bigr ) \\
\longrightarrow
\gr^r_! \bigl (\coFil_{*,g_i(\varrho)}(\Psi_{\pi_v}) \bigr ) \Bigr ) \rightarrow 0.
\end{multline*}
If $r$ is not of the shape $tg_i(\varrho)$ there is nothing to prove, otherwise as
\begin{itemize}
\item the irreducible constituents of $i^{1}_{c,*} \lexp p \hi^0 i^{1,*}_c 
\Fil^{r-1}_! \bigl (\coFil_{*,g_i(\varrho)}(\Psi_{\pi_v}) \bigr ) \Bigr )$ are, by induction, 
intermediate extensions of Harris-Taylor local systems on $X^{=i}_{\IC,\bar s}$ for $i \leq r$,

\item and $i^{1}_{c,*} \lexp p \hi^{-1} i^{1,*}_c P(t,\pi_v) $ is supported 
on $X^{\geq r+1}_{\IC,\bar s}$,
\end{itemize}
then the cone map $i^{1}_{c,*} \lexp p \hi^{-1} i^{1,*}_c P(t,\pi_v) \longrightarrow 
i^{1}_{c,*} \lexp p \hi^0 i^{1,*}_c \Fil^{r-1}_! \bigl (\coFil_{*,g_i(\varrho)}(\Psi_{\pi_v}) 
\bigr )$
is trivial. The result follows then from the short exact sequence
\begin{multline*}
0 \rightarrow i^{1}_{c,*} \lexp p \hi^0 i^{1,*}_c 
\Fil^{r-1}_! \bigl (\coFil_{*,g_i(\varrho)}(\Psi_{\pi_v}) \bigr ) \longrightarrow
i^{1}_{c,*} \lexp p \hi^0 i^{1,*}_c 
\Fil^{r}_! \bigl (\coFil_{*,g_i(\varrho)}(\Psi_{\pi_v}) \bigr )  \\ \longrightarrow
i^{1}_{c,*} \lexp p \hi^0 i^{1,*}_c \gr^r_! \bigl (\coFil_{*,g_i(\varrho)}(\Psi_{\pi_v}) \bigr )    
\rightarrow 0.
\end{multline*}
\end{proof}

It suffices now to prove that the epimorphism
$$i^{1}_{c,*} \lexp p \hi^0 i^{1,*}_{c} \Psi_{\pi_v} \twoheadrightarrow 
i^{1}_{c,*} \lexp p \hi^0 i^{1,*}_{c}  \bigl (\coFil_{*,g_i(\varrho)}(\Psi_{\pi_v}) \bigr ) $$
is an isomorphism. For that it suffices to prove that, for every geometric point $z$, the germs at $z$
of the sheaves cohomology groups of these two perverse sheaves, are the same.

Let then $z$ be a geometric point of $X^{=h}_{\IC,\bar s,\overline{1_h}}$. 
By \cite{boyer-invent2}, the germ at $z$ of the $i$-th sheaf of cohomology
 $\hi^i j^{=kg}_{\overline{1_{kg}},*} HT_{\overline{1_{kg}}}(\pi_v,\st_k(\pi_v))
\otimes \Xi^{\frac{1-k}{2}}$ is zero if $(h,i)$ is not of the shape $(d-tg,tg-d+k-t)$ with 
$k \leq t \leq \lfloor \frac{d}{g} \rfloor$ and otherwise isomorphic to those of 
$$HT_{\overline{1_{tg}}} \bigl ( \pi_v,\st_k(\pi_v\{ \frac{k-t}{2} \}) \otimes 
\speh_{t-k} ( \pi_v \{ \frac{k}{2} \} ) \bigr ) \otimes \Xi^{\frac{1+t-2k}{2}}.$$
Then for $h=d-tg$ and $i=tg-d+k-t$, the fiber at $z$ of $\hi^i j^{=kg}_{\overline{1_1},*} HT_{\overline{1_1}}
(\pi_v,\st_k(\pi_v)) \otimes \Xi^{\frac{1-k}{2}}$  is isomorphic to those of
$$HT_{\overline{1_{tg}}} \Bigl ( \pi_v, \bigl ( \st_k(\pi_v\{ \frac{k-t}{2} \} 
)_{|P_{1,kg}(F_v)} \times 
\speh_{t-k} ( \pi_v \{ \frac{k}{2} \} ) \bigr ) \Bigr ) \otimes \Xi^{\frac{1+t-2k}{2}},$$
where we induce from $P_{1,kg}(F_v) \otimes GL_{(t-k)g}(F_v)$ to $P_{1,tg}(F_v)$. 
Moreover considering the weights, we see that the spectral sequence computing the 
fibers of sheaves of cohomology of $\Psi_{\pi_v,c}$ from those of 
$\gr^k_!(\Psi_{\pi_v,c})$ degenerate at $E_1$. From \ref{lem-mirabolique}, we have
$$\bigl ( \st_k(\pi_v\{ \frac{k-t}{2} \}) \bigr )_{|P_{1,kg}(F_v)} \times \speh_{t-k} ( \pi_v \{ \frac{k}{2} \} )
\simeq \bigl ( LT_\pi(k,t-1-k)_{\pi_v} \bigr )_{|P_{1,tg}(F_v)}$$
so that, by the main result of \cite{boyer-invent2}, the fiber at $z$ of 
$\hi^i  \Psi_{\pi_v,c}$ is isomorphic to those of  
$\hi^i \bigl ( \lexp p \hi^0 i_{c}^{1,*} \Psi_{\pi_v} \bigr )$, so we are done.
\end{proof}

Dually we have
\addtocounter{thm}{1}
\begin{equation} \label{eq-sec2-psi}
0 \rightarrow \Psi_{\varrho,*,c} \longrightarrow \Psi_{\varrho} \longrightarrow
j_{\neq c,*} j^{*}_{\neq c} \Psi_{\varrho} \rightarrow 0,
\end{equation}
such that the graded pieces $\gr^h_!(\Psi_{\varrho,*,c})$ verify the following properties
\begin{itemize}
\item it is trivial if $h$ is not equal to some $g_i(\varrho) \leq d$ for $i \geq -1$;

\item for such $i \geq -1$ with $g_i(\varrho) \leq d$, then
$$\gr^{g_i(\varrho)}_!(\Psi_{\varrho,*,c}) \otimes_{\overline \Zm_l} \overline \Qm_l \simeq 
\bigoplus_{\pi_v \in \cusp(\varrho,i)} \gr^{g_i(\varrho)}_!(\Psi_{\pi_v,*,c})$$
where $\gr^{g_i(\varrho)}_!(\Psi_{\pi_v,*,c})$ is the pull back
$$\xymatrix{
\gr^{g_i(\varrho)}_!(\Psi_{\pi_v,*,c}) \ar@{^{(}-->}[r] \ar@{^{(}-->}[d] &  \Psi_{\pi_v,*,c}
\ar@{^{(}->}[d] \\
\Fil^{g_i(\varrho)}_!(\Psi_{\pi_v}) \ar@{^{(}->}[r] &  \Psi_{\pi_v}.
}$$
\end{itemize}

\rem applied to $\Psi_{\pi_v}$, the equality (\ref{eq-psi-groth}) becomes, cf. \cite{boyer-torsion} \S 3.4
\addtocounter{thm}{1}
\begin{equation} \label{eq-psi-groth5}
\bigl [ \Psi_{\pi_v,*,c} \bigr ]=\sum_{t=1}^{\lfloor \frac{d}{g} \rfloor } 
P(t,\pi_v)_c(\frac{t-1}{2}).
\end{equation}
More precisely for $\pi_v \in \cusp(\varrho,i)$, then
$\gr^{g_i(\varrho)}_!(\Psi_{\pi_v,*,c})$ has a filtration $\Fil^k(\gr^{g_i(\varrho)}_!(\Psi_{\pi_v,*,c}))$
for $0 \leq k \leq  s_i(\varrho):=\lfloor \frac{d}{g_i(\varrho)} \rfloor$ with graded pieces 
$\gr^k(\gr^{g_i(\varrho)}_!(\Psi_{\pi_v,*,c})) \simeq P(s_i(\varrho)-k+1,\pi_v)_c(\frac{s_i(\varrho)-k}{2})$.

\section{Non-degeneracy property for submodules}
\label{para-lattice}

Recall first that, for a fixed irreducible $\overline \Fm_l$-representation $\varrho$ of
$D_{v,d}^\times$, the notation $\VC^{d-1}_{\varrho,\Nm}$ (resp. 
$\UC^{d-1}_{\varrho,\Nm}$) designates the direct factor
of $\VC^{d-1}_{F_v,\overline \Zm_l,d}$ (resp. the free quotient $\UC^{d-1}_{F_v,\overline \Zm_l,d,free}$)
associated with $\varrho$ in the sense of
\cite{dat-torsion} \S B.2. 
Let $i_z: z \hookrightarrow X^{=d}_{\IC,\bar s}$,
be any supersingular point. Then, from the main theorem of
Berkovitch in \cite{berk2}, we have an isomorphism
\addtocounter{thm}{1}
\begin{equation} \label{eq-berk}
\ind_{(D_{v,d}^\times)^0 \varpi_v^\Zm}^{D_{v,d}^\times} \lexp p \hi^0 i_z^!
\Psi_{\varrho} \simeq \VC^{d-1}_{\varrho,\Nm},
\end{equation}
and respectively
\addtocounter{thm}{1}
\begin{equation} \label{eq-berk2}
\bigl ( \ind_{(D_{v,d}^\times)^0 \varpi_v^\Zm}^{D_{v,d}^\times} \lexp p \hi^0 i_z^*
\Psi_{\varrho} \bigr )_{free} \simeq \UC^{d-1}_{\varrho,\Nm},
\end{equation}
which are equivariant for $D_{v,d}^\times \times GL_d(F_v) \times W_{F_v}$,
so that we are led to compute $\lexp p \hi^0 i_z^! \Psi_{\varrho}$ (resp. the free quotient of
$\lexp p \hi^0 i_z^* \Psi_{\varrho}$).

\rem $\lexp {p+} \hi^0 i_z^! \Psi_\varrho$ might have torsion\footnote{The main theorem of
\cite{boyer-duke} tells that this is not the case.} but by definition 
$\lexp {p} \hi^0 i_z^! \Psi_\varrho$ is necessarily free.

Recall that our strategy is to construct a filtration of $\VC^{d-1}_{\varrho,\Nm}$
(resp. $\UC^{d-1}_{\varrho,\Nm}$)
with irreducible graded pieces as a 
$\overline \Zm_l$-representation of $GL_d(F_v) \times D_{F_v,d}^\times \times W_{F_v}$, which means
that they are free and irreducible after tensoring with $\overline \Qm_l$.
Of course the idea is to obtain such a filtration from a filtration $\Fil^\bullet(\Psi_\varrho)$
of $\Psi_\varrho$, constructed using the Newton stratification, so that
the associated spectral sequences 
$$E_{!,1}^{r,s}:=\lexp p \hi^{r+s} i_z^! \gr^{-r} (\Psi_\varrho) \Rightarrow \lexp p \hi^{r+s} i_z^! \Psi_\varrho$$
and
$$E_{*,1}^{r,s}:=\lexp p \hi^{r+s} i_z^* \gr^{-r} (\Psi_\varrho) \Rightarrow \lexp p \hi^{r+s} i_z^* \Psi_\varrho,$$
give us the expected filtrations of
$\hi^0 i_z^! \Psi_{\varrho}$ and $\lexp p \hi^0 i_z^* \Psi_{\varrho}$,
where $\gr^\bullet(\Psi_\varrho)$ are the graded pieces of $\Fil^\bullet(\Psi_\varrho)$.

\rem as long as we are only interested in $\VC^{d-1}_{\varrho,\Nm}$ (resp. $\UC^{d-1}_{\varrho,\Nm}$)
and not with the others 
$\VC^{\bullet}_{\varrho,\Nm}$ and $\UC^{\bullet}_{\varrho,\Nm}$,
we have in fact only to bother with the perverse sheaves concentrated on the supersingular locus.
More precisely note that for a perverse sheaf $\PC$ not concentrated in the supersingular locus,
then $\lexp p \hi^\delta i_z^! \PC$ (resp. $\lexp p \hi^{-\delta} i_z^* \PC$)
is zero for $\delta \leq 0$ (resp. zero for $\delta <0$ and torsion for $\delta=0$). 

Meanwhile
some of the $E_{!,1}^{r,s}$ (resp. $E_{*,1}^{r,s}$) for $r+s=1$ (resp. $r+s=0$) might be torsion so that
to control the lattices, it is better if all the perverse sheaves concentrated on the supersingular locus
appears before (resp. after) the others, see the fourth step in the next section. 

\rem of course as long as you
are only concerned with perverse sheaves on the supersingular locus, you do not need to bother about
the positions of these perverse sheaves but only about their lattices, cf. \S \ref{para-terminology}.  At the end 
the non-degeneracy persistence property will be deduced from proposition \ref{prop-zele}.

\subsection{The case of $\VC^{d-1}_{\varrho,\Nm}$}
\label{para-VC}

The main goal is then to construct a filtration of $\Psi_\varrho$.

\medskip

\noindent \textit{First step}: we start with the following three pieces filtration.
For every $c \in GL_d(F_v)/P_{1,d}(F_v)$, note that any supersingular point
belongs to the pure stratum $X^{\geq 1}_{\IC,\bar s,c}$, so that in the
short exact sequence (\ref{eq-sec2-psi})
\addtocounter{thm}{1}
\begin{equation} \label{eq-sat1}
0 \rightarrow \Psi_{\varrho,*,c} \longrightarrow \Psi_{\varrho} \longrightarrow
j_{\neq c,*} j^{*}_{\neq c} \Psi_{\varrho} \rightarrow 0,
\end{equation}
we have, with harmless abuse of notations, 
$\hi^0 i_z^! \Psi_{\varrho} \simeq \hi^0 i_z^! \Psi_{\varrho,*,c}$.
Consider then another pure stratum $X^{\geq 1}_{\IC,\bar s,c'}$ with $c' \neq c$.

\begin{lemma} \label{lem-icpsi2}
The perverse sheaf $\lexp p \hi^i i^{1,*}_{c'} \Psi_{\varrho,*,c}$ is zero for $i \neq 0$ 
and it is free for $i=0$.
\end{lemma}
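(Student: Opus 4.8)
The plan is to reduce the statement to the already–proven freeness of $\Psi_{\varrho,*,c}$ together with the short exact sequence (\ref{eq-sec2-psi}) and a germ/cohomology-sheaf computation. First I would record the general principle, used already in the proof of Lemma~\ref{lem-j-c} and Proposition~\ref{prop-filtration1}: for a free perverse sheaf $L$ on $X_{\IC,\bar s}$ and a locally closed immersion $i_{c'}^1$, the obstruction to $\lexp p\hi^i i_{c'}^{1,*}L$ vanishing for $i\neq 0$ and being free for $i=0$ can be read off from the germs at geometric points $z\in X^{=h}_{\IC,\bar s,c'}$ of the cohomology sheaves $\hi^j i_z^*L$, via the spectral sequence $E_2^{r,s}=\lexp p\hi^r i_{c'}^{1,*}\bigl(\lexp p\hi^s \bar i^*\bullet\bigr)$, as in the proof that $\Psi_c$ is free.

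The key steps, in order, are: (1) Note that by the decomposition of Proposition~\ref{prop-decomposition} it suffices to treat each $\Psi_{\pi_v,*,c}$ for $\pi_v\in\cusp(\varrho)$ over $\overline\Qm_l$, then invoke freeness of $\Psi_{\varrho,*,c}$ (which is the kernel of the map of free perverse sheaves in (\ref{eq-sec2-psi})) to control torsion; so the real content is about the germs. (2) Apply $\lexp p\hi^\bullet i_{c'}^{1,*}$ to (\ref{eq-sec2-psi}); since $j_{\neq c,*}j^*_{\neq c}\Psi_{\varrho}$ is the intermediate extension from an open containing the stratum indexed by $c'$, and $c'\neq c$ means $X^{\geq 1}_{\IC,\bar s,c'}$ meets that open, its restriction to $c'$ is governed by the behaviour of $\Psi_\varrho$ itself on a neighbourhood of a generic point of $X^{=h}_{\IC,\bar s,c'}$; using the long exact sequence one reduces $\lexp p\hi^i i_{c'}^{1,*}\Psi_{\varrho,*,c}$ to $\lexp p\hi^i i_{c'}^{1,*}\Psi_{\varrho}$ in the relevant degrees, up to the contribution of $j_{\neq c,*}j^*_{\neq c}\Psi_\varrho$, which is an intermediate extension hence already perverse there. (3) For $\Psi_\varrho$ (equivalently each $\Psi_{\pi_v}$) the germs of cohomology sheaves at a point of $X^{=h}_{\IC,\bar s,c'}$ are given by \cite{boyer-invent2} exactly as recalled in the proof of Proposition~\ref{prop-filtration1}: they are concentrated so that $i_{c'}^{1,*}\Psi_\varrho$ lies in $\lexp p\DC^{\leq 0}$, and the $p+$ (freeness) side follows because the affine morphism $\bar j_{\neq c'}$ forces $\lexp p\hi^{-1}\bar i^*_{c'}\overline F$ to be free, exactly as in the proof that $\Psi_c$ is free.

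The main obstacle I expect is step (2): making precise that the presence of the \emph{second} pure stratum $X^{\geq 1}_{\IC,\bar s,c}$ with $c\neq c'$ does not spoil perversity of the restriction to $c'$. One must check that $j_{\neq c,*}j^*_{\neq c}\Psi_\varrho$ restricted to a neighbourhood of the generic point of $X^{=h}_{\IC,\bar s,c'}$ agrees with $\Psi_\varrho$ there (because $X^{\geq 1}_{\IC,\bar s,c}$ does not contain that generic point when $h$ is small, and for larger $h$ the relevant germs of the two agree by the weight/degeneration argument of Proposition~\ref{prop-filtration1}), so that the defect $h^0 i_{c'}^! \bigl(j_{\neq c,*}j^*_{\neq c}\Psi_\varrho\bigr)$ contributes nothing new to the cohomology sheaves of $i_{c'}^{1,*}\Psi_{\varrho,*,c}$ in the bad degrees. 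Once this is in place, the spectral sequence degenerates for weight reasons (again as in Proposition~\ref{prop-filtration1}), giving $\lexp p\hi^i i_{c'}^{1,*}\Psi_{\varrho,*,c}=0$ for $i\neq 0$, and the freeness for $i=0$ is inherited from the freeness of $\Psi_{\varrho,*,c}$ together with Lemma~\ref{lem-libre0} applied to the affine immersion $\bar j_{\neq c'}$.
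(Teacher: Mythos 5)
Your overall skeleton --- apply $i^{*}_{c'}$ to the short exact sequence (\ref{eq-sec2-psi}) and combine the resulting long exact sequence with the known behaviour of $\Psi_\varrho$ itself --- is exactly the paper's route, and the input you need for $\Psi_\varrho$ (vanishing of $\lexp p \hi^{i}i^{*}_{c'}\Psi_\varrho$ for $i\neq 0$, freeness for $i=0$) is indeed just the preceding lemma on $\Psi_c$ read with $c'$ in place of $c$. Where you go astray is in the treatment of the third term $j_{\neq c,*}j^{*}_{\neq c}\Psi_{\varrho}$. It is not an intermediate extension (it is the perverse direct image appearing as the cokernel in (\ref{eq-sec2-psi})), and nothing needs to be said about how its germs compare with those of $\Psi_\varrho$ along $X^{=h}_{\IC,\bar s,c'}$. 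All the long exact sequence requires is the general fact, i.e.\ Lemma \ref{lem-libre0} applied to the affine open immersion $j_{\neq c'}$: for any free perverse sheaf $P$ one has $\lexp p\hi^{i}i^{*}_{c'}P=0$ for $i\notin\{0,-1\}$ and $\lexp p\hi^{-1}i^{*}_{c'}P$ free. With that, the long exact sequence kills $\lexp p\hi^{i}i^{*}_{c'}\Psi_{\varrho,*,c}$ for all $i\leq -1$ (both neighbouring terms vanish) and exhibits $\lexp p\hi^{0}i^{*}_{c'}\Psi_{\varrho,*,c}$ as an extension of a subobject of the free module $\lexp p\hi^{0}i^{*}_{c'}\Psi_\varrho$ by the free module $\lexp p\hi^{-1}i^{*}_{c'}\bigl(j_{\neq c,*}j^{*}_{\neq c}\Psi_\varrho\bigr)$, hence free.

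Consequently the ``main obstacle'' you isolate in step (2) is not an obstacle, and the machinery you propose to overcome it (germ computations from \cite{boyer-invent2}, the weight degeneration of Proposition \ref{prop-filtration1}, the supercuspidal decomposition over $\overline\Qm_l$) is both unnecessary and risky: since $\Psi_{\varrho,*,c}$ is precisely the kernel of $\Psi_\varrho\rightarrow j_{\neq c,*}j^{*}_{\neq c}\Psi_\varrho$ and is supported on $X^{\geq 1}_{\IC,\bar s,c}$, which for $h\geq 2$ does meet $X^{=h}_{\IC,\bar s,c'}$, the germ comparison you want to establish there is essentially equivalent to the statement you are trying to prove, so your argument threatens to be circular. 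Replace step (2) by the single appeal to Lemma \ref{lem-libre0} described above and the proof closes.
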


\begin{proof}
Note first, cf. lemma \ref{lem-icpsi}, that the result is true for $\Psi_\varrho$. Moreover for any
perverse free sheaf $P$, we have $\lexp p \hi^i i^*_{c'} P=0$ if $i \not \in \{ 0,-1 \}$ and 
it is free for $i=-1$. The result then follows easily from the long exact sequence associated
to the previous short exact sequence when we apply $i^*_{c'}$.
\end{proof}

In particular in the following short exact sequence
$$0 \rightarrow j_{\neq c',!} j^{*}_{\neq c'} \Psi_{\varrho,*,c} \longrightarrow \Psi_\varrho
\longrightarrow \Psi_{\varrho,*,c,!,c'} \rightarrow 0$$ 
the perverse sheaf $\Psi_{\varrho,*,c,!,c'}$ is free. Moreover as the cokernel of
$j_{\neq c',!} j^{*}_{\neq c'} \Psi_{\varrho,*,c} \hookrightarrow \Psi_{\varrho,*,c}$
is $i^1_{c',*} \lexp p \hi^0 i^{1,*}_{c'} \Psi_{\varrho,*,c} $, we have the following short exact sequence
$$0 \rightarrow i^1_{c',*} \lexp p \hi^0 i^{1,*}_{c'} \Psi_{\varrho,*,c} \longrightarrow 
\Psi_{\varrho,*,c,!,c'} \longrightarrow j_{\neq c,*} j^{*}_{\neq c} \Psi_{\varrho} \rightarrow 0.$$

\rem with the terminology of \S \ref{para-terminology}, the dual version of lemma \ref{lem-icpsi}
tells us that in the short exact sequence (\ref{eq-sat1}), there is no saturation process: in fact all the questions
about saturation keep inside the left and right terms of this short exact sequence, cf. \cite{boyer-duke}
for more details. In the same way,
the previous lemma tells us that there is no saturation process in considering the adjunction morphism
$j_{\neq c',!} j^{*}_{\neq c'} \Psi_{\varrho,*,c} \longrightarrow \Psi_{\varrho,*,c}$.

\medskip

\noindent \textit{Second step}: we want to refine the filtration of the first step in order to compute 
$\lexp p \hi^0 i_z^! \Psi_{\varrho}$ for any supersingular point $z$. Note first that, as
$z$ belongs to any $X^{\geq 1}_{\IC,\bar s,c'}$, then
$\lexp p \hi^0 i_z^!  j_{\neq c,*} j^{*}_{\neq c} \Psi_{\varrho} =(0)$ so in fact we just need to
bother about the first two graded pieces of the previous filtration, which corresponds to a filtration of
$\Psi_{\varrho,*,c}$.
With the terminology of \S \ref{para-terminology}, at this step we do not want to
deal with lattices and positions, but just describe the irreducible sub-quotients over $\overline \Qm_l$.

Consider the first graded piece $j_{\neq c',!} j^{*}_{\neq c'} \Psi_{\varrho,*,c}$.
As $j_{\neq c'}$ is affine, $j_{\neq c',!} j_{\neq c'}^*$ is an exact functor, so that from the
previous section, $j_{\neq c',!} j^{*}_{\neq c'} \Psi_{\varrho,*,c}$ has a filtration
$\Fil^\bullet \bigl ( j_{\neq c',!} j^{*}_{\neq c'} \Psi_{\varrho,*,c} \bigr )$
with graded pieces $j_{\neq c',!} j^{*}_{\neq c'}  \gr_!^{g_i(\varrho)} \Psi_{\varrho,*,c}$.
\footnote{Again here there is no need of saturation.}

We then have
$$j_{\neq c',!} j^{*}_{\neq c'} \gr^{g_i(\varrho)}_!(\Psi_{\varrho,*,c}) \otimes_{\overline \Zm_l} 
\overline \Qm_l \simeq \bigoplus_{\pi_v \in \cusp(\varrho,i)} 
j_{\neq c',!} j^{*}_{\neq c'}\gr^{g_i(\varrho)}_!(\Psi_{\pi_v,*,c}),$$
where $j_{\neq c',!} j^{*}_{\neq c'} \gr^{g_i(\varrho)}_!(\Psi_{\pi_v,*,c})$ has a filtration 
whose graded pieces, by combining (\ref{eq-psi-groth5}) and lemma \ref{lem-j-c}, are, in the 
order of appearance from the socle to the top, 
\begin{itemize}
\item for $t=s_i(\varrho), \cdots, 2$, the $P(t,\pi_v)_c(\frac{t-1}{2})$ obtained through the
following short exact sequence twisted by $(\frac{t-1}{2})$
$$0 \rightarrow P(t,\pi_v)_{c,\neq c'} \longrightarrow 
P(t,\pi_v)_c
\longrightarrow P(t,\pi_v)_{\langle c,c' \rangle } \rightarrow 0,$$
where
$$P(t,\pi_v)_{c,\neq c'}:=
\lexp p  j^{=1}_{\neq c',!*}j^{=1,*}_{\neq c'} P_{\overline \Qm_l,c}(t,\pi_v),$$

\item and with last quotient $P(1,\pi_v)_{c,\neq c'}$.
\end{itemize}
The second graded piece $ i^1_{c',*} \lexp p \hi^0 i^{1,*}_{c'} \Psi_{\varrho,*,c}$
of the filtration of the first step, is
a free perverse sheaf which, over $\overline \Qm_l$, have then irreducible constituents 
$P(1,\pi_v)_{\langle c , c' \rangle}$ for $\pi_v \in \cusp(\varrho)$ an irreducible 
representation of $GL_{g}(F_v)$ with $g<d$.

\medskip

\noindent \textit{Third step}: 
We now want, using the terminology of \S \ref{para-terminology}, 
to understand lattices and positions of the perverse sheaves of the second step. Our aim
is to give a filtration over $\overline \Zm_l$ from which we will be able to compute
$\lexp p \hi^0 i_z^! \Psi_{\varrho}$ thanks to a spectral sequence as usual.

(a) Start first with $ i^1_{c',*} \lexp p \hi^0 i^{1,*}_{c'} \Psi_{\varrho,*,c}$. For $\pi_v$ an irreducible
cuspidal representation of $GL_g(F_v)$ and $1 \leq t \leq s:=\lfloor d/g \rfloor$, note that, if $tg<d$, then
$\lexp p \hi^0 i_z^! P(t,\pi_v)=(0)$. In particular if
$\lexp p \hi^0 i_z^! P(1,\pi_v)_{\langle c , c' \rangle} \neq (0)$ then $g=d$ so that
\begin{itemize}
\item trivially $P(1,\pi_v)_{\langle c , c' \rangle}  = P(1,\pi_v)$,

\item concerning the action of
$GL_d(F_v)$, as the modulo $l$ reduction of $\pi_v$ is still irreducible,
all the stable lattices are homothetic and 

\item there is only one position possible for the intermediate extension
as the perverse sheaf is concentrated on a zero dimensional sub-scheme. 
\end{itemize}
In short, we do not have to bother with
$ i^1_{c',*} \lexp p \hi^0 i^{1,*}_{c'} \Psi_{\varrho,*,c}$.

(b)
We focus then on $\lexp p \hi^0 i_z^! \bigl ( j_{\neq c',!} j^{*}_{\neq c'} \Psi_{\varrho,*,c} \bigr )$
by refining the previous filtration
$\Fil^\bullet \bigl ( j_{\neq c',!} j^{*}_{\neq c'} \Psi_{\varrho,*,c} \bigr )$ with graded pieces
$j_{\neq c',!} j^{*}_{\neq c'}  \gr_!^{g_i(\varrho)} \Psi_{\varrho,*,c}$.
Recall that
$$j^{=g_i(\varrho),*} j_{\neq c',!} j^{*}_{\neq c'}  \gr_!^{g_i(\varrho)} \Psi_{\varrho,*,c}
\otimes_{\overline \Zm_l} \overline \Qm_l\simeq \bigoplus_{\pi_v \in \cusp(\varrho,i)}
HT_{c,\neq c'}(\pi_v,\pi_v),$$
and by fixing any numbering of $\cusp(\varrho,i)=\{ \pi_{v,1},\cdots \}$ the pull-back
$$\xymatrix{
\Fil^k \Bigl (j^{=g_i(\varrho),*} j_{\neq c',!} j^{*}_{\neq c'}  \gr_!^{g_i(\varrho)} \Psi_{\varrho,*,c}
\Bigr ) \ar@{^{(}-->}[r] \ar@{^{(}-->}[d] & j^{=g_i(\varrho),*} j_{\neq c',!} j^{*}_{\neq c'}  \gr_!^{g_i(\varrho)} \Psi_{\varrho,*,c}  \ar@{^{(}->}[d] \\
\bigoplus_{i=1}^k HT_{c,\neq c'}(\pi_{v,i},\pi_{v,i}) \ar@{^{(}->}[r] &
j^{=g_i(\varrho),*} j_{\neq c',!} j^{*}_{\neq c'}  \gr_!^{g_i(\varrho)} \Psi_{\varrho,*,c}
\otimes_{\overline \Zm_l} \overline \Qm_l,
}$$
define then a naive filtration of 
$j^{=g_i(\varrho),*} j_{\neq c',!} j^{*}_{\neq c'}  \gr_!^{g_i(\varrho)} \Psi_{\varrho,*,c}$
such that the graded pieces are some lattices of $HT_{c,\neq c'}(\pi_v,\pi_v)$
for $\pi_v$ describing $\cusp(\varrho,i)$. By taking the image by
$j^{=g_i(\varrho)}_!$ of this filtration, we obtain a filtration of 
$j_{\neq c',!} j^{*}_{\neq c'}  \gr_!^{g_i(\varrho)} \Psi_{\varrho,*,c}$
whose graded pieces are entire versions of 
$j_{\neq c',!} j^{*}_{\neq c'} \gr^{g_i(\varrho)}_!(\Psi_{\pi_v,*,c})$ for $\pi_v$
describing $\cusp(\varrho,i)$. Then we can filtrate each of these graded pieces to obtain
a filtration denoted $\Fill^\bullet \bigl ( j_{\neq c',!} j^{*}_{\neq c'} \Psi_{\varrho,*,c} \bigr )$ 
whose graded pieces $\grr^k \bigl ( j_{\neq c',!} j^{*}_{\neq c'} \Psi_{\varrho,*,c} \bigr )$ 
are some entire version of the $P(t,\pi_v)_c$ if $2 \leq t \leq s_i(\varrho)$
(resp. $P(1,\pi_v)_{c,\neq c'}$), for $\pi_v \in \cusp(\varrho,i)$ with $i \geq -1$: these
entire perverse sheaves may depend on all the choices. 

\rem as pointed out above, we just have to deal with the graded pieces concentrated
on the supersingular locus for which with do not have to bother about the position of
the intermediate extension, but now as $t$ might be strictly greater than one, we have
to describe the lattices. For $\pi_v \in \cusp(\varrho,i)$
and $t$ such that $(t+1)g_i(\varrho)=d$, let then denote by 
$$\PC_{\Fill,!,c}(t+1,\pi_v),$$
the lattice defined above starting from a filtration of 
$j_{\neq c',!} j^{*}_{\neq c'} \Psi_{\pi_v,*,c}$, cf. (\ref{eq-pfillc}). 
Note at this point that finally 
$\PC_{\Fill,!,c}(t+1,\pi_v)$ is given by the short exact sequence
of lemma \ref{lem-j-c} so that we can easily identify its lattice as explained in 
theorem \ref{thm-principal1}.

%

\medskip

\noindent \textit{Fourth step}: now we want to modify the previous filtration of 
$\Psi_{\varrho,*,c}$
 by reorganizing the order
of the graded pieces so that those concentrated on the supersingular locus appear first.
Let first explain why it is possible to do so. Denote by 
$$j^{1 \leq d-1}: X^{\geq 1}_{\IC,\bar s} \setminus X^{=d}_{\IC,\bar s} \hookrightarrow X^{\geq 1}_{\IC,\bar s}$$
and consider the adjunction morphism
$$\Psi_{\varrho,*,c} \longrightarrow \lexp {p+} j^{1 \leq d-1}_* j^{1 \leq d-1,*} \Psi_{\varrho,*,c},$$
where (\ref{eq-psi-groth5}) described $\Psi_{\varrho,*,c}$, at least over $\overline \Qm_l$.
Then the kernel $K_{\varrho,*,c,d}$ of this morphism, is by construction free and in the Grothendieck group we have
$$\Bigl [ K_{\varrho,*,c,d} \otimes_{\overline \Zm_l} \overline \Qm_l \Bigr ]=
\sum_{i \geq -1}
\sum_{\atop{\pi_v \in \cusp(\varrho,i)}{d=t_ig_i(\varrho)}} \bigl [ P(t_i,\pi_v)(\frac{t_i-1}{2}) \bigr ],$$
i.e. $ K_{\varrho,*,c,d}$ gathers all the irreducible constituents of $\Psi_{\varrho,*,c}$ concentrated on
the supersingular locus. To see this, it suffices to argue on $\Psi_{\pi_v}$  for $\pi_v \in \cusp(\varrho,i)$ with $K_{\pi_v,*,c,d}$ the kernel of the adjunction morphism
$\Psi_{\pi_v,*,c} \longrightarrow \lexp {p+} j^{1 \leq d-1}_* j^{1 \leq d-1,*} \Psi_{\pi_v,*,c}.$ We then notice that
\begin{itemize}
\item for a pure perverse sheaf $P$
of weight $0$ then the irreducible constituents of $\lexp p j^{1 \leq d-1}_{*} j^{1 \leq d-1,*} P$ are of non-negative weight;

\item and the irreducible constituents of $j^{1 \leq d-1,*} \Psi_{\pi_v,*,c}$ are the $P(t,\pi_v)(\frac{t-1}{2})$ 
with $t\leq s_i:=\lfloor \frac{d}{g_i(\varrho)} \rfloor$ (resp. $t < s_i$) if $g_i(\varrho)$ do not divide $d$
(resp. if $s_ig_i(\varrho)=d$). 
\end{itemize}
Then when $g_i(\varrho)$ divide $d$,  all the irreducible constituents
are of weight strictly greater than those of $P(s_i,\pi_v)(\frac{s_i-1}{2})$ so that
$K_{\pi_v,*,c,d}=P(s_i,\pi_v)(\frac{s_i-1}{2})$.
Otherwise $K_{\pi_v,*,c,d}$ is trivial.

\medskip

We start then from the previous filtration which is $P_{c,c'}(F_v)$-equivariant
and where the order of its graded pieces verifies the following property: 
\begin{itemize}
\item $P(1,\pi_v)$ for $\pi_v \in \cusp(\varrho,i)$ with $g_i(\varrho)=d$ appears after, 
i.e. in higher graded pieces than those associated with either $P(t,\pi'_v)_{c,\neq c'}$ 
or $P(t,\pi'_v)_{\langle c,c' \rangle}$ for $\pi'_v \in \cusp(\varrho,i')$ 
where $tg_i(\varrho)<d$ or $t>1$;

\item let $k$ be the index of graded piece associated with $P(t,\pi_v)$ with $t>1$,
$\pi_v \in \cusp(\varrho,i)$ with $tg_i(\varrho)=d$. Then for any graduate piece
of index $k'<k$ associated with some $P(t',\pi'_v)_{c,\neq c'}$ or 
$P(t',\pi'_v)_{\langle c,c' \rangle}$ with $\pi'_v \in \cusp(\varrho,i')$ and
$t'g_{i'}(\varrho)<d$, then $i'<i$.
\end{itemize}
Note also that, for any $\pi_v \in \cusp(\varrho,i)$ and $t$ such that $tg_i(\varrho)<d$, then
if $k_{\neq c'}>k_{c'}$ are the indexes of the graded pieces associated with
respectively $P(t',\pi'_v)_{c,\neq c'}$ and $P(t',\pi'_v)_{\langle c,c' \rangle}$, then
for any $k_{c'} < k < k_{\neq c'}$, the associated graded piece is never concentrated
in the supersingular locus. This implies that we can modify the filtration such that 
\begin{itemize}
\item the graded pieces are of the shape $P(t,\pi_v)_c$ without modifying the lattices of the
graded pieces concentrated in the supersingular locus.

\item Now the filtration is equivariant for the action of $P_c(F_v)$ as all the graduate and the whole of the perverse sheaf,
are $P_c(F_v)$-equivariant.

\end{itemize}

\begin{lemma}
Consider a $P_{c}(F_v)$-equivariant perverse sheaf $X$ which can be written
$$0 \rightarrow A_1 \longrightarrow X \longrightarrow A_2 \rightarrow 0$$
where 
\begin{itemize}
\item $A_2$ is a free perverse sheaf with $A_2 \otimes_{\overline \Zm_l} \overline \Qm_l=P(t,\pi_v)$ for $\pi_v \in \cusp(\varrho,i)$ and $tg_i(\varrho)=d$,

\item and $A_1$ is some free perverse sheaf with
$A_1 \otimes_{\overline \Zm_l} \overline \Qm_l$
isomorphic to $P(t',\pi_v')_{c}$
with $\pi'_v \in \cusp(\varrho,i')$, $h=t'g_{i'}(\varrho)$ and $i'<i$.
\end{itemize}
Suppose moreover that 
$$X \otimes_{\overline \Zm_l} \overline \Qm_l \simeq (A_1 \otimes_{\overline \Zm_l} 
\overline \Qm_l)  \oplus (A_2 \otimes_{\overline \Zm_l} \overline \Qm_l),$$
then $X \simeq A_1 \oplus A_2$.
\end{lemma}

\begin{proof}
We have a diagram like (\ref{eq-prop-extension}) where $T$ is supported on
$X^{=d}_{\IC,\bar s}$ so that $A_1 \hookrightarrow A'_1 \twoheadrightarrow T$
is obtained through 
$$\lexp p j^{=h}_{c,!*} j^{=h,*}_c A_1 \htarrow_+ A_1 \htarrow_+ A'_1 \htarrow_+ \lexp {p+} 
j^{=h}_{c,!*} j^{=h,*}_c A_1.$$
Suppose by absurdity, that $T \neq (0)$. 
\begin{itemize}
\item Then as a quotient of $A'_1$ and as a representation of $P_{c}(F_v)$
$T \otimes_{\overline \Zm_l} \overline \Fm_l$ is isomorphic to a small mirabolic induced
representation $(r_l(\st_{t'}(\pi'_v))_{|P_{c}(F_v)} \times \tau$ for some
$\overline \Fm_l$-representation $\tau$ of $GL_{d-t'g_{i'}(\varrho)}(F_v)$, and where
$r_l$ designates the modulo $l$ reduction functor. In particular,
proposition \ref{prop-zele} imposes that $T \otimes_{\overline \Zm_l} \overline \Fm_l$ must
have an irreducible sub-quotient with derivative of order $g_{i'}(\varrho)$.

\item On the other side, note that as a quotient of $A_2$ all of its derivative have order 
$\geq g_i(\varrho)>g_{i'}(\varrho)$.
\end{itemize}
So $T$ must be trivial, i.e. $X=A_1 \oplus A_2$.
\end{proof}

We now sum up what we have done until now.

\begin{propo} \label{prop-fil-final1}
There exists a filtration
$$(0)=\Fil^0(\Psi_{\varrho,*,c}) \subset \Fil^1(\Psi_{\varrho,*,c}) \subset
\Fil^2(\Psi_{\varrho,*,c})=\Psi_{\varrho,*,c}$$ 
such that 
\begin{itemize}
\item the irreducible constituents of $\gr^i(\Psi_{\varrho,*,c}) \otimes_{\overline \Zm_l}
\overline \Qm_l$ for $i=1$ (resp. $i=2$) are all with support in $X^{=d}_{\IC,\bar s}$
(resp. are of the form $P(t,\pi_v)_c$ with $\pi_v \in \cusp(\varrho,i)$ and
$tg_i(\varrho)<d$).

\item Moreover there is a filtration of 
\begin{multline*}
(0)=\Fil^{-2}( \gr^1(\Psi_{\varrho,*,c})) \subset \Fil^{-1}( \gr^1(\Psi_{\varrho,*,c})) 
\subset \cdots \\ \cdots
\subset \Fil^s( \gr^1(\Psi_{\varrho,*,c}))=\gr^1(\Psi_{\varrho,*,c})
\end{multline*}
whose graded pieces $\gr^i ( \gr^1(\Psi_{\varrho,*,c}))$ are zero except if there exists
$t$ such that $tg_i(\varrho)=d$ in which case with the previous notations,
$\gr^i ( \gr^1(\Psi_{\varrho,*,c}))$ admits a naive filtration, cf. point (b) in the third step,
indexed by $\pi_v \in \cusp(\varrho,i)$ such that its graded pieces are, cf. (\ref{eq-pfillc}), 
the $\PC_{\Fill,!,c}(t,\pi_v)$.
\end{itemize}
\end{propo}

\rem we will next identify the $\PC_{\Fill,!,c}(t,\pi_v)$ and see that, at least for
the action of $P_c(F_v)$, they are independent of the choice of the naive filtration, so that
we will simply write 
\addtocounter{thm}{1}
\begin{equation} \label{eq-nota-approx}
\gr^i ( \gr^1(\Psi_{\varrho,*,c})) \approx \bigoplus_{\pi_v \in \cusp(\varrho,i)}
\PC_{\Fill,!,c}(t,\pi_v),
\end{equation}
in place of the long statement in the second point of the proposition above.

\medskip

\noindent \textit{Last step}: as explained in the introduction of \S \ref{para-lattice},
for a perverse sheaf $P(t,\pi_v)$ not concentrated in the supersingular locus, we have
$\lexp p \hi^0 i_z^! P(t,\pi_v)=(0)$. 
In the long exact sequence associated with the $\lexp p \hi^\bullet i_z^!$
applied to the short exact sequence
$$0 \rightarrow \gr^1(\Psi_{\varrho,*,c}) \longrightarrow \Fil^2(\Psi_{\varrho,*,c})
\longrightarrow \gr^2(\Psi_{\varrho,*,c}) \rightarrow 0,$$
we then have $\lexp p \hi^0 i_z^! \Psi_{\varrho,*,c} \simeq \lexp p \hi^0 i_z^! 
\gr^1(\Psi_{\varrho,*,c})$.
From the previous proposition we obtain a filtration
\begin{multline*}
(0)=\Fil^{-2}( \lexp p \hi^0 i_z^! (\Psi_{\varrho})) \subset 
\Fil^{-1}( \lexp p \hi^0 i_z^! (\Psi_{\varrho})) \subset \cdots \\ \cdots \subset
\Fil^s( \lexp p \hi^0 i_z^! (\Psi_{\varrho}))=\lexp p \hi^0 i_z^! (\Psi_{\varrho})
\end{multline*}
whose non zero graded pieces coincide with the indexes $i\geq -1$ such that there exists $t$ with 
$tg_i(\varrho)=d$ and then with the notation of (\ref{eq-nota-approx})
$$\gr^i ( \lexp p \hi^0 i_z^! (\Psi_{\varrho})) \approx \bigoplus_{\pi_v \in \cusp(\varrho,i)} 
\PC_{\Fill,!,c}(t,\pi_v)_z.$$

We will now simply denote by $\bigl ( \Fill^k(\Psi_{\varrho,*,c}) \bigr )_{0 \leq k \leq r}$ 
the filtration of $\gr^1(\Psi_{\varrho,*,c})$ obtained above such that its graded pieces 
$\grr^k(\Psi_{\varrho,*,c})$ are irreducible after tensoring with $\overline \Qm_l$. 
We then also denote by 
$$\Fill^k(\VC^{d-1}_{\varrho,\Nm} ):= \ind_{(D_{v,d}^\times)^0 \varpi_v^\Zm}^{D_{v,d}^\times} 
\lexp p \hi^0 i_z^! \Fill^k(\Psi_{\varrho,*,c}).$$

\begin{thm} \label{thm-principal1}
As a $\overline \Zm_l[P_d(F_v) \times D_{v,d}^\times \times W_{F_v}]$-module, 
the successive graded pieces $\gr^k (\VC^{d-1}_{\varrho,\Nm})$ are such that
there exists $i$, $\pi_v \in \cusp(\varrho,i)$ and $t$ such that $tg_i(\varrho)=d$ with
$$\gr^k (\VC^{d-1}_{\varrho,\Nm}) \simeq \Gamma_{GDW}(\pi_v),$$ 
with $\Gamma_{GDW}(\pi_v) \simeq \Gamma_G(\pi_v) \otimes 
\Gamma_D(\pi_v) \otimes \Gamma_W(\pi_v)$ where
\begin{itemize}
\item $\Gamma_D(\pi_v)$ (resp. $\Gamma_W(\pi_v)$) is a stable lattice of $\pi_v[t]_D$ (resp. 
$\Lm_{g_i(\varrho)}(\pi_v)$);

\item $\Gamma_G(\pi_v)$ is isomorphic to the stable lattice 
$\bigl ( RI_{\overline \Zm_l,-}(\pi_v,t) \bigr )_{|P_d(F_v)}$ of 
definition \ref{defi-RI}.
\end{itemize}
\end{thm}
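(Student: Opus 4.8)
The strategy is to push the filtration of $\lexp p \hi^0 i_z^!(\Psi_\varrho)$ obtained just above through the Berkovich isomorphism (\ref{eq-berk}), and then to unwind the internal structure of each graduate. Since $(D_{v,d}^\times)^0 \varpi_v^\Zm$ has finite index in $D_{v,d}^\times$, the induction $\ind_{(D_{v,d}^\times)^0 \varpi_v^\Zm}^{D_{v,d}^\times}$ is a finite sum of twists, hence exact and $GL_d(F_v) \times W_v$-equivariant; applying it to
$$(0)=\Fil^{-2}\bigl(\lexp p \hi^0 i_z^!(\Psi_\varrho)\bigr) \subset \cdots \subset \Fil^s\bigl(\lexp p \hi^0 i_z^!(\Psi_\varrho)\bigr) = \lexp p \hi^0 i_z^!(\Psi_\varrho)$$
together with (\ref{eq-berk}) already produces a $\overline\Zm_l[P_d(F_v) \times D_{v,d}^\times \times W_v]$-filtration of $\VC^{d-1}_{\varrho,\Nm}$ whose nonzero graduates occur for $-1 \le i \le s$, $s$ maximal with $g_s(\varrho) \mid d$, the $i$-th one being $\bigoplus_{\pi_v \in \scusp_i(\varrho)} \ind\bigl(\lexp p \hi^0 i_z^!\, \PC_{\Fill,!,c}(t,\pi_v)\bigr)$ with $t = d/g_i(\varrho)$. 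It remains to identify $\Gamma_{GDW}(\pi_v) := \ind\bigl(\lexp p \hi^0 i_z^!\, \PC_{\Fill,!,c}(t,\pi_v)\bigr)$.

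The key simplifying point is that $tg_i(\varrho)=d$, so $X^{=d}_{\IC,\bar s}$ is closed and $0$-dimensional; on such a stratum the $p$- and $p+$-intermediate extensions coincide, so the ambiguity recorded in the remark after Notation \ref{nota-lattice} does not occur and $\PC_{\Fill,!,c}(t,\pi_v)$ is just $j^{=d}_{c,!}$ of an entire Harris--Taylor local system $\LC(\pi_v[t]_D) \otimes \Gamma \otimes \Lm(\pi_v)$ (up to the usual twist), for some stable lattice $\Gamma$ of $\st_t(\pi_v)$ — a lattice that a priori depends on the auxiliary choices made in the construction of the $\Fill$-filtration. Then $\lexp p \hi^0 i_z^!$ merely reads off the stalk at $z$, and inducing separates the three group actions into an external tensor product $\Gamma_G \otimes \Gamma_D \otimes \Gamma_W$, where $\Gamma_D$ is the stable lattice of $\pi_v[t]_D$ carried by the factor $\LC(\pi_v[t]_D)$, $\Gamma_W$ the stable lattice of $\Lm_{g_i(\varrho)}(\pi_v)$ carried by $\Lm(\pi_v)$ — which is the unique one, the reduction of the cuspidal $\pi_v$, hence that of its Langlands--Vigneras parameter, being irreducible — and $\Gamma_G \simeq \Gamma_{|P_d(F_v)}$. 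This reduces the theorem to the identity $\Gamma_{|P_d(F_v)} \simeq \bigl(RI_{\overline\Zm_l,-}(\pi_v,t)\bigr)_{|P_d(F_v)}$.

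This last identity is the heart of the matter, and I would prove it by induction on $t$, matching the way the $\Fill$-filtration of $j_{\neq c',!} j^*_{\neq c'} \Psi_{\pi_v,*,c}$ is assembled against the recursion of Definition \ref{defi-RI}. For $t=1$ (so $g_i(\varrho)=d$), the perverse sheaf $\PC(1,\pi_v)$ is supported on the supersingular locus and has a unique stable lattice up to homothety, corresponding to the unique lattice of $\st_1(\pi_v)=\pi_v$, i.e. to $RI_{\overline\Zm_l,-}(\pi_v,1)$. For the passage from $k$ to $k+1$, Lemma \ref{lem-j-c} describes precisely one stage of the $\Fill$-filtration: its short exact sequence realizes the mirabolic restriction $\st_{k+1}(\pi_v)_{|P_{\langle c,c'\rangle}(F_v)}$ out of $\st_k(\pi_v)$ through the affine $!$-extension $j_{\neq c',!}$, and Lemma \ref{lem-mirabolique} identifies the induced representation $\st_k(\pi_v\{-\frac{1}{2}\})_{|P}\times\pi_v\{\frac{k}{2}\}$ appearing there with $\st_{k+1}(\pi_v)_{|P}$. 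Applying $\lexp p \hi^0 i_z^!$ and inducing, this stage becomes, up to the standard normalisation twists, exactly the surjection of $P_d(F_v)$-restricted lattices $RI_{\overline\Zm_l,-}(\pi_v,k) \times \Gamma' \twoheadrightarrow RI_{\overline\Zm_l,-}(\pi_v,k+1)$ — with $\Gamma'$ the relevant lattice of the twist of $\pi_v$ — that defines the minus variant in Definition \ref{defi-RI}; the fact that $j_{\neq c',!}$ is a compactly-supported, hence quotient-type, extension is what selects the \enquote{$-$} and not the \enquote{$+$} lattice. Iterating from $k=1$ to $k=t$ gives $\Gamma_{|P_d(F_v)} \simeq RI_{\overline\Zm_l,-}(\pi_v,t)_{|P_d(F_v)}$ and thus the theorem. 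The main obstacle I expect is the bookkeeping: carrying the lattices and normalisation twists correctly through the iterated applications of Lemma \ref{lem-j-c}, and verifying that although the entire Harris--Taylor sheaf $\Gamma$ does depend on the choices in the $\Fill$-construction, its restriction to $P_d(F_v)$ does not — this being forced by the homogeneity of the mirabolic restriction of irreducible representations (\cite{zelevinski1} \S4, \cite{zelevinski2} \S6) already used in Lemma \ref{lem-mirabolique}.
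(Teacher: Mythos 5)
Your proposal follows essentially the same route as the paper: reduce via the Berkovich isomorphism (\ref{eq-berk}) and the filtration of proposition \ref{prop-fil-final1} to identifying the lattice $\PC_{\Fill,!,c}(t,\pi_v)$, then pin down its $P_d(F_v)$-factor recursively by matching the stage of lemma \ref{lem-j-c} (via lemma \ref{lem-mirabolique}) against the surjection $\st_{t-1}(\pi_v)\times\pi_v\twoheadrightarrow\st_t(\pi_v)$ defining $RI_{\overline\Zm_l,-}$. Your induction is phrased on $t$ where the paper inducts on $d$, but since the recursion passes through the Harris--Taylor data on the stratum $X^{=(t-1)g_i(\varrho)}$ these are the same argument.
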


\rem in other words, the lattice $\PC_{\Fill,!,c}(t,\pi_v)$, which is a sheaf on the
supersingular locus, is with the previous notations, fiber by fiber isomorphic to 
$\Gamma_G(\pi_v) \otimes \Gamma_D(\pi_v) \otimes \Gamma_W(\pi_v)$. 

\begin{proof}
We argue by induction on $d$. As the result is trivial for $g_{-1}(\varrho)$ because, 
as the modulo $l$ reduction is irreducible, there
is, up to isomorphism, only one stable lattice, we suppose the result true for all $h<d$.
We then use the statement of the theorem through the isomorphism (\ref{eq-berk}), 
to obtain informations on the lattices of our perverse sheaves not concentrated on the
supersingular locus. Thus arguing like before on $j^{1 \leq h,*} \Psi_{\varrho}$,
we can conclude that, for any $i\geq -1$, $\pi_v \in \cusp(\varrho,i)$ and $tg_i(\varrho)<d$,  
the lattices of $HT_c(\pi_v,\st_t(\pi_v))(\frac{t-1}{2})$
of the graded pieces $\gr^k ( j_{\neq c',!} j^*_{\neq c'} \Psi_{\varrho,*,c})$ are of the shape
\addtocounter{thm}{1}
\begin{equation} \label{eq-shape}
\LC_D \otimes \bigl ( RI_{\overline \Zm_l,-}(\pi_v,t) \bigr )_{|P_{tg_i(\varrho)}(F_v)} 
\otimes \Gamma_W,
\end{equation}
where $\LC_D$ is some stable $\overline \Zm_l$-lattice sheaf of $\LC(\pi_v[t]_D)_c$.

To prove the theorem, by the isomorphism (\ref{eq-berk}),
we now have to show that the lattice $\PC_{\Fill,!,c}(t,\pi_v)$ is a tensorial product
of lattices equipped with action by $P_d(F_v)$, $D_{v,d}^\times$ and $W_{F_v}$,
and the lattice for the action of $P_d(F_v)$ is isomorphic to
$\bigl ( RI_{\overline \Zm_l,-}(\pi_v,t) \bigr )_{|P_d(F_v)}$.

By hypothesis we have $\pi_v \in \cusp(\varrho,i)$ and $tg_i(\varrho)=d$. Recall also, 
using the exactness of $j_{c,\neq c',!}$, that $\PC_{\Fill,!,c}(t,\pi_v)$ fits in the following
short exact sequence of lemma \ref{lem-j-c}
\addtocounter{thm}{1}
\begin{multline} \label{eq-pfillc}
0 \rightarrow \PC_{\Fill,!,c}(t,\pi_v) \longrightarrow j_{\neq c',!} j^*_{\neq c'} 
\gr^k  ( j_{\neq c',!} j^*_{\neq c'} \Psi_{\varrho,*,c}) \\ \longrightarrow 
\lexp p j_{\neq c',!*} j^*_{\neq c'} \gr^k  ( j_{\neq c',!} j^*_{\neq c'} \Psi_{\varrho,*,c}) \rightarrow 0
\end{multline}
where 
$$\gr^k  ( j_{\neq c',!} j^*_{\neq c'} \Psi_{\varrho,*,c}) \otimes_{\overline \Zm_l}
\overline \Qm_l \simeq P(t-1,\pi_v)_c(\frac{t-2}{2}) \hookrightarrow P(t-1,\pi_v)(\frac{t-2}{2}).
$$
Over $\overline \Zm_l$, we have seen that
$j^{=(t-1)g_i(\varrho),*}\gr^k  ( j_{\neq c',!} j^*_{\neq c'} \Psi_{\varrho,*,c})$ is a tensorial
product of lattices where those relatively to the action of $P_{(t-1)g_i(\varrho)}(F_v)$
is $\bigl ( RI_{\overline \Zm_l,-}(\pi_v,t-1) \bigr )_{|P_{(t-1)g_i(\varrho)}(F_v)}$. 
Moreover, as the supersingular locus belongs to $X^{\geq 1}_{\IC,\bar s,c'}$, 
relatively to the supersingular locus,
$\gr^k  ( j_{\neq c',!} j^*_{\neq c'} \Psi_{\varrho,*,c})$ 
behaves like a $p$-intermediate extension, i.e. 
$$\hi^0 i_z^* \gr^k  ( j_{\neq c',!} j^*_{\neq c'} \Psi_{\varrho,*,c})=(0),$$
 for all geometric supersingular point $z$.
For such a $p$-intermediate extension we materialize this property by
writing $p(ss)$ as a left exponent. By inducing we can then write
$$\gr^k  ( j_{\neq c',!} j^*_{\neq c'} \Psi_{\varrho,*,c}) \hookrightarrow 
\lexp {p(ss)} \PC_{RI,\otimes}(t-1,\pi_v)(\frac{t-2}{2})$$
where $\lexp {p(ss)} \PC_{RI,\otimes}(t-1,\pi_v)$ is a lattice of
$P_{\overline \Qm_l}(t-1,\pi_v)$ verifying the following two properties:
\begin{itemize}
\item for all geometric supersingular point $z$, 
the perverse sheaf behaves like a $p$-intermediate extension, i.e.
$\hi^0 i_z^* \left ( \lexp {p(ss)} \PC_{RI,\otimes}(t-1,\pi_v) \right )=(0)$;

\item $j^{=(t-1)g_i(\varrho),*} \bigl ( \lexp {p(ss)} \PC_{RI,\otimes}(t-1,\pi_v)\bigr )$ is a tensorial
stable lattice where the lattice associated with the action of $P_{(t-1)g_i(\varrho)}(F_v)$ is 
isomorphic to $\bigl ( RI_{\overline \Zm_l,-}(\pi_v,t-1) \bigr )_{|P_{(t-1)g_i(\varrho)}(F_v)}$.
\end{itemize}

\rem we prefer to introduce the full of $\lexp {p(ss)} \PC_{RI,\otimes}(t-1,\pi_v) $
instead of $j^{\geq 1,*}_c \lexp {p(ss)} \PC_{RI,\otimes}(t-1,\pi_v) $
which is then isomorphic to $\gr^k  ( j_{\neq c',!} j^*_{\neq c'} \Psi_{\varrho,*,c})$,
see the next diagram.

We then have
$$\xymatrix{
\PC_{\Fill,!,c}(t,\pi_v) \ar@{^{(}->}[r] \ar@{-->}[d]^\simeq &  j_{\neq c',!} j^*_{\neq c'} 
\gr^k  ( j_{\neq c',!} j^*_{\neq c'} \Psi_{\varrho,*,c}) \ar@{^{(}->}[d] 
\\
\lexp p \hi^{-1} i_{c'}^* \lexp {p(ss)} \PC_{RI,\otimes}(t-1,\pi_v)(\frac{t-2}{2}) \ar@{^{(}->}[r] &  j_{\neq c',!} j^*_{\neq c'} 
\lexp {p(ss)} \PC_{RI,\otimes}(t-1,\pi_v)(\frac{t-2}{2}).
}$$
By (\ref{eq-pfillc}) $\PC_{\Fill,!,c}(t,\pi_v) \simeq \lexp p \hi^{-1} i_{c'}^*
\gr^k  ( j_{\neq c',!} j^*_{\neq c'} \Psi_{\varrho,*,c})$ 
and as for a Harris-Taylor perverse sheaf $P$ supported on the supersingular locus,
we have $P=P_c=P_{\langle c,c' \rangle}$, then the left map of the diagram
is an isomorphism over $\overline \Qm_l$. Moreover as each of the other
maps of this diagram are strict so is the dotted one which is then an isomorphism.

By lemma \ref{lem-important} and (\ref{eq-shape}),
the lattice relatively to the action of $P_d(F_v)$ on
$\lexp p \hi^{-1} i_{c'}^* \lexp {p(ss)} \PC_{RI,\otimes}(t-1,\pi_v)(\frac{t-2}{2})$ is given by the 
induced representation
$$\xymatrix{ 
 RI_{\overline \Zm_l,-}(\pi_v \{ \frac{-1}{2} \},t-1) \times 
 (\pi_v\{ \frac{t-1}{2} \} )_{|P_{g_i(\varrho)}(F_v)}
\ar@{^{(}->}[r] \ar[dr]^\sim & \bigl ( RI_{\overline \Zm_l,-}(\pi_v\{\frac{-1}{2} \},t-1)
\times \pi_v\{\frac{t-1}{2} \}  \bigr )_{|P_d(F_v)} \ar@{->>}[d] \\
& RI_{\overline \Zm_l,-}(\pi_v,t)_{|P_d(F_v)},
}$$
which finishes the proof.
\end{proof}

From proposition \ref{prop-defi-Vk}, we obtain the expected
non-degeneracy property.

\begin{corol} \label{coro-principal1}
Any irreducible $P_d(F_v)$-equivariant subspace
of $\VC^{d-1}_{\varrho,\Nm} \otimes_{\overline \Zm_l} \overline \Fm_l$, 
is non-degenerate and so isomorphic to $\tau_{nd}$.
\end{corol}

\subsection{The case of $\UC^{d-1}_{\varrho,\Nm}$}

In \cite{boyer-duke}, we prove that for any supercuspidal $\overline \Fm_l$-representation
$\varrho$, then $\UC^{d-1}_{\varrho,\Nm}$ is free. As at this stage we do not want to
use \cite{boyer-duke}, we introduce its free quotient $\UC^{d-1}_{\varrho,\Nm,free}$.
We then follow exactly the same steps than in the previous section, but dually.
Precisely fix a supersingular point $z$ and denote as before 
$i_z: z \hookrightarrow X^{=d}_{\IC,\bar s}$.
From the $D_{v,d}^\times \times GL_d(F_v) \times W_{F_v}$-equivariant isomorphism
\addtocounter{thm}{1}
\begin{equation} \label{eq-berk3}
\ind_{(D_{v,d}^\times)^0 \varpi_v^\Zm}^{D_{v,d}^\times} \lexp p \hi^0 i_z^*
\Psi_{\varrho} \simeq \UC^{d-1}_{\varrho,\Nm},
\end{equation}
we are led to compute the free quotient of $\lexp p \hi^0 i_z^* \Psi_\varrho$. As explained
in the introduction of \S \ref{para-lattice}, we first construct a filtration of $\Psi_\varrho$.
Dually to the previous section consider first the
short exact sequences
$$0 \rightarrow j_{\neq c,!} j^*_{\neq c} \Psi_\varrho \longrightarrow \Psi_\varrho
\longrightarrow \Psi_{\varrho,!,c} \rightarrow 0,$$
and 
$$0 \rightarrow i^1_{c',*} \lexp {p+} \hi^0 i_{c'}^{1,!} \Psi_{\varrho,!,c} \longrightarrow
\Psi_{\varrho,!,c} \longrightarrow j_{c,\neq c',*} j_{c,\neq c'}^* \Psi_{\varrho,!,c} \rightarrow 0.$$
One can also introduce $\Psi_{\varrho,!,c,*,c'}$ as the pull-back
$$\xymatrix{
 j_{\neq c,!} j^*_{\neq c} \Psi_\varrho \ar@{^{(}->}[r] \ar@{=}[d] &
 \Psi_{\varrho,!,c,*,c'} \ar@{-->>}[r] \ar@{^{(}-->}[d] & 
 i^1_{c',*} \lexp {p+} \hi^0 i_{c'}^{1,!} \Psi_{\varrho,!,c} \ar@{^{(}->}[d]  \\
 j_{\neq c,!} j^*_{\neq c} \Psi_\varrho \ar@{^{(}->}[r] & \Psi_\varrho \ar@{->>}[r] &
 \Psi_{\varrho,!,c}.
 }$$
 
 (a) Using the exactness of $j_{\neq c',*}j^*_{\neq c'}$, as in second step of the 
 previous section, the filtration 
$\Fil^\bullet_*(\Psi_{\varrho,!,c})$ of proposition \ref{prop-filtration1}, gives a filtration
$\Fil^\bullet (j_{\neq c',*} j^*_{\neq c'} \Psi_{\varrho,!,c})$ with graded pieces
$j_{\neq c',*} j^*_{\neq c'} \gr_*^{-g_i(\varrho)+1}(\Psi_{\varrho,!,c})$ such that
$$j_{\neq c',*} j^{*}_{\neq c'} \gr^{-g_i(\varrho)+1}_*(\Psi_{\varrho,!,c}) \otimes_{\overline \Zm_l} 
\overline \Qm_l \simeq \bigoplus_{\pi_v \in \cusp(\varrho,i)} 
j_{\neq c',*} j^{*}_{\neq c'}\gr^{-g_i(\varrho)+1}_*(\Psi_{\pi_v,!,c}),$$
where $j_{\neq c',*} j^{*}_{\neq c'} \gr^{-g_i(\varrho)+1}_!(\Psi_{\pi_v,!,c})$ has a filtration 
whose graded pieces, by lemma \ref{lem-j-c}, are, from quotient to subspaces,
\begin{itemize}
\item for $ s_i(\varrho) \geq t \geq 2$, the 
$P(t,\pi_v)_{\langle c,c' \rangle}(\frac{1-t}{2})$ and $P(t,\pi_v)_{c,\neq c'}(\frac{1-t}{2})$
allowing to reconstruct $P(t,\pi_v)_c(\frac{1-t}{2})$ by the short
exact sequence
$$0 \rightarrow P(t,\pi_v)_{\langle c,c' \rangle} \longrightarrow
P(t,\pi_v)_{c} \longrightarrow P(t,\pi_v)_{c, \neq c'} \rightarrow 0,$$

\item and $P(1,\pi_v)_{c,\neq c'}$.
\end{itemize}

(b) Concerning $i^1_{c',*} \lexp {p+} \hi^0 i_{c'}^{1,!} \Psi_{\varrho,!,c}$, after tensoring
with $\overline \Qm_l$, its irreducible sub-quotients are the 
$P(1,\pi_v)_{\langle c, c' \rangle}$ for $\pi_v \in \cusp(\varrho,i)$ with $g_i(\varrho)<d$.

By arguing like in the third and fourth steps of the previous section, 
we can manage to modify the previous filtration  to another one such that 
\begin{itemize}
\item it is equivariant for the action of $P_c(F_v)$;

\item its graded pieces are the $P(t,\pi_v)_c$ without modifying the lattices, given
by the starting filtration, of the perverse sheaves concentrated on the supersingular locus;

\item the perverse sheaves concentrated on the supersingular locus appears in the last
graded pieces;

\item if $P(t,\pi_v)(\frac{1-t}{2})$ (resp. $P(t',\pi'_v)(\frac{1-t'}{2})$) for 
$\pi_v \in \cusp(\varrho,i)$ (resp. $\pi'_v \in \cusp(\varrho,i')$) are concentrated in
the supersingular locus. If $i<i'$ then the indexes $k$ and $k'$ respectively associated
to them verify $k>k'$.
\end{itemize}

\rem As before we do not pay attention to the position of these perverse sheaves
between the $p$ and $p+$ intermediate extensions, but we merely concentrate
on the lattice of the associated local systems.

\begin{nota} \label{nota-lattice2}
For $\pi_v \in \cusp(\varrho,i)$
and $t$ such that $tg_i(\varrho)=d$, denote by $\PC_{\Fill,*,c}(t,\pi_v)$
the lattice obtained by the previous construction starting from the filtration of 
$j_{\neq c',*} j^{*}_{\neq c'} \Psi_{\pi_v,!,c}$.
\end{nota}

To sum up we state the analogous of proposition \ref{prop-fil-final1}.

\begin{propo} \label{prop-fil-final2}
There exists a filtration of 
$$(0)=\Fil^{-2}(\Psi_{\varrho,!,c}) \subset \Fil^{-1}(\Psi_{\varrho,!,c}) \subset
\Fil^0(\Psi_{\varrho,!,c})=\Psi_{\varrho,!,c}$$ 
such that 
\begin{itemize}
\item the irreducible constituents of $\gr^i(\Psi_{\varrho,!,c}) \otimes_{\overline \Zm_l}
\overline \Qm_l$ for $i=0$ (resp. $i=-1$) are all with support in $X^{=d}_{\IC,\bar s}$
(resp. are of the form $P(t,\pi_v)_c$ with $\pi_v \in \cusp(\varrho,i)$ and
$tg_i(\varrho)<d$).

\item Moreover there is a filtration of 
\begin{multline*}
(0)=\Fil^{-s-1}( \gr^0(\Psi_{\varrho,!,c})) \subset \Fil^{-s}( \gr^0(\Psi_{\varrho,!,c})) 
\subset \cdots \\ \cdots \subset
\Fil^{-1}( \gr^0(\Psi_{\varrho,!,c}))=\gr^0(\Psi_{\varrho,!,c})
\end{multline*}
whose graded pieces $\gr^{-i} ( \gr^0(\Psi_{\varrho,!,c}))$ are zero except if there exists
$t$ such that $tg_i(\varrho)=d$ in which case with  the notation of \ref{nota-lattice2} and
(\ref{eq-nota-approx}),
$$\gr^{-i} ( \gr^0(\Psi_{\varrho,!,c}))\approx \bigoplus_{\pi_v \in \cusp(\varrho,i)} 
\PC_{\Fill,*,c}(t,\pi_v).$$
\end{itemize}
\end{propo}

We denote by $\bigl ( \Fill^k(\Psi_{\varrho,!,c}) \bigr )_{0 \leq k \leq r}$ 
the filtration of $\gr^0(\Psi_{\varrho,!,c})$ obtained above such that its graded pieces 
$\grr^k(\Psi_{\varrho,!,c})$ are irreducible after tensoring with $\overline \Qm_l$. 
We also denote by 
$$\Fill^k(\UC^{d-1}_{\varrho,\Nm} ):= \lexp p \hi^0 i_z^* \Fill^k(\Psi_{\varrho,!,c}).$$
Using (\ref{eq-berk2}) and arguing by induction we obtain the 
$\UC^{d-1}_{\varrho,\Nm}$-version of theorem \ref{thm-principal1}.

\begin{thm} \label{thm-principal2}
As a $\overline \Zm_l[P_d(F_v) \times D_{v,d}^\times \times W_{F_v}]$-module, 
the successive graded pieces $\grr^k (\UC^{d-1}_{\varrho,\Nm})$
are such that there exists $i$, $\pi_v \in \cusp(\varrho,i)$ and $t$ such that 
$tg_i(\varrho)=d$ with
$$\grr^{k} (\UC^{d-1}_{\varrho,\Nm,free}) \simeq \Gamma_{GDW}(\pi_v),$$ 
with $\Gamma_{GDW}(\pi_v) \simeq \Gamma_G(\pi_v) \otimes 
\Gamma_D \otimes \Gamma_W(\pi_v)$ where
\begin{itemize}
\item $\Gamma_D$ (resp. $\Gamma_W$) is a stable lattice of $\pi_v[s_i(\varrho)]_D$ (resp. 
$\Lm_{g_i(\varrho)}(\pi_v)$);

\item $\Gamma_G$ is isomorphic to a stable $P_d(F_v)$-equivariant lattice of
$\st_{s_i(\varrho)}(\pi_v)$ such that every irreducible subspace of its modulo $l$ reduction, 
is isomorphic to $r_l(\tau_{nd})$.
\end{itemize}
\end{thm}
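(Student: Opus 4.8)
The plan is to carry over the proof of Theorem~\ref{thm-principal1} in a dualized form: throughout one replaces $\lexp p \hi^0 i_z^!$ by $\lexp p \hi^0 i_z^*$, the module $\VC^{d-1}_{\varrho,\Nm}$ by the free quotient $\UC^{d-1}_{\varrho,\Nm,free}$, the exact sequence~(\ref{eq-sec2-psi}) by the one attached to $\Psi_{\varrho,!,c}$, and the refined filtration of Proposition~\ref{prop-fil-final1} by that of Proposition~\ref{prop-fil-final2}. Concretely, I would start from the $D_{v,d}^\times \times GL_d(F_v) \times W_v$-equivariant isomorphism~(\ref{eq-berk2}); the statement is then reduced to describing, as a lattice for these three actions, the free quotient of $\lexp p \hi^0 i_z^*\Psi_\varrho$ for a supersingular point $i_z\colon z\hookrightarrow X^{=d}_{\IC,\bar s}$, and more precisely to checking that its $P_d(F_v)$-part is a stable lattice of the relevant generalized Steinberg representation whose only irreducible mod~$l$ subspace is $\tau_{nd}$.

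First I would use the two short exact sequences
$$0 \rightarrow j_{\neq c,!} j^*_{\neq c}\Psi_\varrho \longrightarrow \Psi_\varrho \longrightarrow \Psi_{\varrho,!,c}\rightarrow 0,\qquad 0 \rightarrow i^1_{c',*}\lexp {p+} \hi^0 i_{c'}^{1,!}\Psi_{\varrho,!,c}\longrightarrow \Psi_{\varrho,!,c}\longrightarrow j_{c,\neq c',*}j^*_{c,\neq c'}\Psi_{\varrho,!,c}\rightarrow 0.$$
Since $z$ is supersingular it lies in $X^{\geq 1}_{\IC,\bar s,c}$ for every pure stratum, so $i_z^* j_{\neq c,!}(-)=0$ and hence $\lexp p \hi^0 i_z^*\Psi_\varrho\simeq \lexp p \hi^0 i_z^*\Psi_{\varrho,!,c}$; a second pure stratum $c'$ is introduced through the second sequence. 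I would then apply the stratification filtration $\Fil^\bullet_*(\Psi_{\varrho,!,c})$ of Proposition~\ref{prop-filtration1} followed by the refinement of Proposition~\ref{prop-fil-final2}: as the graduates of the form $\PC(t,\pi_v)_c$ with $tg_i(\varrho)<d$ are intermediate extensions of local systems on strata of positive dimension, they satisfy $\lexp p \hi^0 i_z^*\PC(t,\pi_v)_c=(0)$, so only the graduates concentrated on the supersingular locus survive. This leaves a filtration of $\lexp p \hi^0 i_z^*\Psi_\varrho$, indexed by those $i\geq -1$ with $g_i(\varrho)\mid d$, whose graduate for such an $i$ is, with the notation of~\ref{nota-lattice2},
$$\bigoplus_{\pi_v\in\scusp_i(\varrho)}\PC_{\Fill,*,c}(t,\pi_v)_z,\qquad t=s_i(\varrho)=\frac{d}{g_i(\varrho)};$$
since each such graduate is itself a free perverse sheaf, passing to free quotients is harmless and compatible with the filtration.

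Next I would argue by induction on $d$, the case where $\st_t(\pi_v)$ carries a unique lattice (e.g. $t=1$) being trivial. Assuming the corresponding statement for all $\UC^{h-1}_{-,\Nm,free}$ with $h<d$, Proposition~\ref{prop-fbartau} and the description of the strata $X^{=h}_{\IC,\bar s,\overline{1_h}}$ show that the lattices of the Harris--Taylor local systems entering the graduates $\gr^k( j_{\neq c',*}j^*_{\neq c'}\Psi_{\varrho,!,c})$ are tensor products whose $P_h(F_v)$-factor is a stable lattice of the corresponding generalized Steinberg representation all of whose irreducible mod~$l$ subspaces are non-degenerate, i.e. the restriction to $P_h(F_v)$ of an $RI_{\overline \Zm_l,-}$-type lattice in the sense of Definition~\ref{defi-RI} and Proposition~\ref{prop-RI-M}. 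Realizing $\PC_{\Fill,*,c}(t,\pi_v)$ as the kernel produced by the short exact sequence of Lemma~\ref{lem-j-c}, and unwinding the $P_d(F_v)$-action through the mirabolic isomorphisms of Lemma~\ref{lem-mirabolique}, I would identify its $P_d(F_v)$-part with the lattice of $\st_t(\pi_v)$ obtained by iterating, from $t=1$, the surjective construction of Definition~\ref{defi-RI}, namely $\bigl(RI_{\overline \Zm_l,-}(\pi_v,t)\bigr)_{|P_d(F_v)}$; the $D_{v,d}^\times$- and $W_v$-parts come out as stable lattices of $\pi_v[s_i(\varrho)]_D$ and $\Lm_{g_i(\varrho)}(\pi_v)$. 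By Proposition~\ref{prop-RI-M} the only irreducible $P_d(F_v)$-subspace of the mod~$l$ reduction of this $\Gamma_G$ is $\tau_{nd}$, which is the last assertion; assembling the graduates over $i$ and $\pi_v$ then gives the announced filtration of $\UC^{d-1}_{\varrho,\Nm,free}$, and the corollary on non-degeneracy follows as in the $\VC$ case.

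The hard part will be this last lattice-identification step: one has to make sure the construction produces the ``$-$'' lattice and not $\bigl(RI_{\overline \Zm_l,+}(\pi_v,t)\bigr)_{|P_d(F_v)}$, which by Corollary~\ref{coro-RI-nd} acquires a degenerate irreducible subspace modulo~$l$ and would destroy the conclusion. The reason it comes out as the ``$-$'' lattice is that the dualization of Theorem~\ref{thm-principal1} is not literal: the $\UC$-computation uses $i_z^*$ together with the $!$-modification $\Psi_{\varrho,!,c}$, two passages to the ``other side'' which cancel, so that $\PC_{\Fill,*,c}(t,\pi_v)$ is again realized as a sub-object (the kernel in Lemma~\ref{lem-j-c}) carrying the quotient/$RI_{-}$ structure of Definition~\ref{defi-RI} rather than the sub/$RI_{+}$ one. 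Two further routine checks will be needed: that the position of these graduates between the $p$ and $p+$ intermediate extensions, which this argument does not pin down, does not affect the underlying local-system lattice; and that the torsion cokernels $T$ of the splitting diagrams of type~(\ref{eq-prop-extension}) vanish, which uses verbatim the derivative-order argument of the $\VC$ case (an irreducible subquotient of such a $T$, seen as a quotient of the lower piece, would have derivative of order $g_{i'}(\varrho)$, whereas seen as a quotient of the supersingular piece all its derivatives have order $\geq g_i(\varrho)>g_{i'}(\varrho)$).
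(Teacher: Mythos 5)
Your skeleton is the paper's: the equivariant isomorphism (\ref{eq-berk2}), the two short exact sequences attached to $\Psi_{\varrho,!,c}$, the filtrations of Propositions \ref{prop-filtration1} and \ref{prop-fil-final2}, the vanishing of $\lexp p \hi^0 i_z^*$ on the graduates not concentrated on the supersingular locus, the induction on $d$, and the two routine checks at the end. The gap sits exactly where you flag ``the hard part'': the identification $\Gamma_G\simeq\bigl(RI_{\overline\Zm_l,-}(\pi_v,t)\bigr)_{|P_d(F_v)}$, and the heuristic you offer for it rests on a false premise. In the $\UC$ case $\PC_{\Fill,*,c}(t,\pi_v)$ is \emph{not} realized as the kernel in Lemma \ref{lem-j-c}; it is realized as a quotient, namely as $\lexp {p+} \hi^{1} i_{c'}^!$ applied to an entire version of $\PC(s_i(\varrho)-1,\pi_v)$, via the surjection $j_{\neq c',*}j^*_{\neq c'}\gr^k(\cdots)\twoheadrightarrow\PC_{\Fill,*,c}(s_i(\varrho),\pi_v)$ dual to the injection used for $\VC$. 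What this produces on the $P_d(F_v)$-side is $\Gamma'_G\times(\pi_v\{\frac{1-s_i(\varrho)}{2}\})_{|P_{g_i(\varrho)}(F_v)}$, where by induction $\Gamma'_G$ is only known to be \emph{some} $P_{(s_i(\varrho)-1)g_i(\varrho)}(F_v)$-equivariant lattice of $\st_{s_i(\varrho)-1}(\pi_v\{\frac{1}{2}\})$ with the non-degeneracy property --- not the restriction of a $GL$-stable lattice --- and the only comparison with $\st_{s_i(\varrho)}(\pi_v)_{|P_d(F_v)}$ one gets passes through the surjection onto $LT_{\pi_v}(s_i(\varrho)-2,1)_{|P_d(F_v)}$. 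This is precisely why the paper remarks that, unlike the $\VC$ situation, the lattice cannot easily be pinned down here, and why the theorem only asserts the non-degeneracy of subspaces rather than an $RI_-$-identification.

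The paper's actual argument avoids the identification entirely: the induction hypothesis is taken to be exactly the conclusion of the theorem (every irreducible subspace of $r_l(\Gamma'_G)$ is $r_l(\tau_{nd})$), and the same property for $\Gamma'_G\times(\pi_v\{\cdot\})_{|P_{g_i(\varrho)}(F_v)}$ is deduced from the exactness of $\Phi^-$ and $\Psi^-$ together with Proposition \ref{prop-zele} (in particular part (d), which forces any nonzero submodule of such an induced module to have nonvanishing $\Phi^-$, so its derivatives are controlled by those of $\Gamma'_G$). Proposition \ref{prop-RI-M} is not invoked here. If you want to keep your route, you must actually prove that the quotient construction $\lexp {p+} \hi^{1} i_{c'}^!$ carries $RI_{\overline\Zm_l,-}(\pi_v,t-1)$-type lattices to $\bigl(RI_{\overline\Zm_l,-}(\pi_v,t)\bigr)_{|P_d(F_v)}$; the ``two dualizations cancel'' slogan does not establish this, and nothing in your argument excludes that the construction lands on a $P_d(F_v)$-lattice that is not the restriction of any $GL_d(F_v)$-stable lattice at all.
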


The only difference from the previous section concerns the lattice $\Gamma_G$
which is obtained through
$$\xymatrix{
j_{\neq c',*} j^*_{\neq c'} \lexp {p+(ss)} \PC_{RI,\otimes}(s_i(\varrho)-1,\pi_v)
(\frac{2-s_i(\varrho)}{2}) \ar@{->>}[r]  &
\lexp {p+} \hi^{1} i_{c'}^! \PC_{RI,\otimes}(s_i(\varrho)-1,\pi_v) (\frac{s_i(\varrho)-2}{2}) \\
j_{\neq c',*} j^*_{\neq c'} \gr^k(j_{\neq c',*} j^*_{\neq c'} \Psi_{\varrho,!,c}) \ar@{^{(}->}[u] 
\ar@{->>}[r] & \PC_{\Fill,*,c}(s_i(\varrho),\pi_v) \ar@{-->}[u]^\simeq
}$$
and where, by induction, $\lexp {p+} \hi^{1} i_{c'}^! \PC_{RI,\otimes}(s_i(\varrho)-1,\pi_v) 
(\frac{s_i(\varrho)-2}{2})$
is given by $\Gamma'_G \times (\pi_v\{ \frac{1-s_i(\varrho)}{2} \} )_{|P_{g_i(\varrho)}(F_v)}$
where by the induction hypothesis $\Gamma'_G$ is a 
$P_{(s_i(\varrho)-1)g_i(\varrho)}(F_v)$-equivariant
lattice of $\st_{s_i(\varrho)-1}(\pi_v \{ \frac{1}{2} \} )$ such that every subspace of its modulo $l$
reduction is isomorphic to $r_l(\tau_{nd})$.
The persistence of non-degeneracy property then follows from the exactness of
$\Phi^-$ and $\Psi^-$ and from proposition \ref{prop-zele}.

\rem It is not so easy than in the previous situation, to identify the lattice as now we only
have the following commutative diagram
$$\xymatrix{ 
& \st_{s_i(\varrho)}(\pi_v)_{|P_d(F_v)} \ar@{^{(}->}[d] \\
\Gamma'_G \times (\pi_v\{ \frac{1-s_i(\varrho)}{2} \} )_{|P_{g_i(\varrho)}(F_v)}
\ar@{^{(}->}[r] \ar@{->>}[dr] & \bigl ( \Gamma'_G
\times \pi_v\{\frac{1-s_i(\varrho)}{2} \}  \bigr )_{|P_d(F_v)} \ar@{->>}[d] \\
& LT_{\pi_v}(s_i(\varrho)-2,1)_{|P_d(F_v)}.
}$$

\begin{corol} \label{coro-principal2}
Any irreducible $P_d(F_v)$-equivariant subspace
of $\UC^{d-1}_{\varrho,\Nm,free} \otimes_{\overline \Zm_l} \overline \Fm_l$, 
is non-degenerate and so isomorphic to $\tau_{nd}$.
\end{corol}

\subsection{Other orders of cohomology groups}
\label{para-other}

As the situations of $\UC^{d-1-\delta}_{\varrho,\Nm}$ and $\VC^{d-1+\delta}_{\varrho,\Nm}$
are dual, consider for example the case of $\UC^{d-1-\delta}_{\varrho,\Nm}$ for $\delta>0$.
Remember the strategy explained in the introduction of \S \ref{para-lattice} which consists in
computing $\lexp p \hi^{-\delta} i_z^* \Psi_\varrho$ through the spectral sequence
$$E_{*,1}^{r,s}:=\lexp p \hi^{r+s} i_z^* \gr^{-r} (\Psi_\varrho) \Rightarrow \lexp p \hi^{r+s} i_z^* \Psi_\varrho,$$
associated with some filtration $\Fil^\bullet(\Psi_\varrho)$ of $\Psi_\varrho$. For $\delta >0$ we now need to 
consider all the perverse sheaves and not only those supported on the supersingular locus. In the previous sections,
arguing inductively on the Lubin-Tate spaces, we essentially understood the lattices but now the question is about
the positions of the Harris-Taylor perverses sheaves which is solved in \cite{boyer-duke}.

Start again from
$$0 \rightarrow j_{\neq c,!} j^*_{\neq c} \Psi_\varrho \longrightarrow \Psi_\varrho
\longrightarrow \Psi_{\varrho,!,c} \rightarrow 0,$$
and with the filtration of $\Fil^\bullet_*(\Psi_{\varrho,!,c})$ with graded pieces
$\gr_*^{-g_i(\varrho)+1}(\Psi_{\varrho,!,c})$ which can be refined as before, such that to obtain
graded pieces $\grr^k(\Psi_{\varrho,!,c})$ verifying
\begin{multline*}
\lexp p j^{=tg_i(\varrho)}_{c,!*} j^{=tg_i(\varrho),*}_c \grr^k(\Psi_{\varrho,!,c}) \\ \htarrow_+
\grr_k(\Psi_{\varrho,*,c}) \htarrow_+\\  \lexp {p+} j^{=tg_i(\varrho)}_{c,!*} 
j^{=tg_i(\varrho),*}_c \grr^k(\Psi_{\varrho,!,c}),
\end{multline*}
with $\grr^k(\Psi_{\varrho,!,c}) \otimes_{\overline \Zm_l} \overline \Qm_l \simeq
P(t,\pi_v)(\frac{1-t}{2})$. In \cite{boyer-duke}, we prove
\begin{itemize}
\item $\grr^k(\Psi_{\varrho,*,c}) \simeq \lexp p j^{=tg_i(\varrho)}_{c,!*}
 j^{=tg_i(\varrho),*}_c \grr^k(\Psi_{\varrho,!,c})$,
 
 \item and the sheaf cohomology group of $\grr_k(\Psi_{\varrho,*,c})$ are free.
 \end{itemize}
In particular for a supersingular point $z$, the spectral sequence computing
$\hi^{-\delta} i_z^* \Psi_{\varrho} \simeq \hi^{-\delta} i_z^* \Psi_{\varrho,!,c}$ through the 
$\hi^\bullet i_z^* \grr^k(\Psi_{\varrho,*,c})$, degenerates at $E_1$.
Note then that the $P_d(F_v)$-lattice is given by the induced representation
$$\Gamma_G \times
\speh_\delta(\pi_v \{ \frac{s_i(\varrho)-\delta-1}{2} \})$$
where 
\begin{itemize}
\item $\Gamma_G$ is a stable $P_{(s_i(\varrho)-\delta)g_i(\varrho)}(F_v)$-lattice
of $\st_{(s_i(\varrho)-\delta)g_i(\varrho)(F_v)}(\pi_v)$ such that any irreducible subspace
is isomorphic to $\tau_{nd}$;

\item $\speh_\delta(\pi_v)$ has, up to isomorphism, only one stable 
$GL_{\delta g_i(\varrho)}(F_v)$-stable lattice.
\end{itemize}
Like in the previous sections, we then obtain the following description of 
$\UC^{d-1-\delta}_{\varrho,\Nm}$, which is free by the main result of \cite{boyer-duke}.

\begin{propo} \label{prop-principal}
As a $\overline \Zm_l[P_d(F_v) \times D_{v,d}^\times \times W_{F_v}]$-module, 
$\UC^{d-1-\delta}_{\varrho,\Nm}$
has a filtration with successive graded pieces $\grr^k (\UC^{d-1-\delta}_{\varrho,\Nm,free})$ 
where there are an associated $i$, $\pi_v \in \cusp(\varrho,i)$ and $t$ such that
$tg_i(\varrho)=d$ and
$$\gr^{-i} (\UC^{d-1-\delta}_{\varrho,\Nm,free}) \simeq \Gamma_{GDW}(\pi_v),$$ 
with $\Gamma_{GDW}(\pi_v) \simeq \Gamma_G(\pi_v) \otimes 
\Gamma_D \otimes \Gamma_W(\pi_v)$ where
\begin{itemize}
\item $\Gamma_D$ (resp. $\Gamma_W$) is a stable lattice of $\pi_v[s_i(\varrho)]_D$ (resp. 
$\Lm_{g_i(\varrho)}(\pi_v)$);

\item $\Gamma_G$ is isomorphic to a stable $P_d(F_v)$-equivariant lattice of
$LT_{\pi_v}(s_i(\varrho)-\delta-1_i,\delta)$ such that any irreducible $P_d(F_v)$-equivariant
subspace of $\Gamma_G \otimes_{\overline \Zm_l} \overline \Fm_l$
has order of derivative equal to $g_i(\varrho)$.
\end{itemize}
\end{propo}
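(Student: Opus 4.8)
The plan is to run, in cohomological degree $d-1-\delta$, the argument that established Theorem \ref{thm-principal2}, keeping track only of the underlying local systems and not of the exact position of the perverse sheaves between their $p$ and $p+$ intermediate extensions. First I would fix a supersingular point $i_z:z\hookrightarrow X^{=d}_{\IC,\bar s}$ and a pure stratum $X^{\geq 1}_{\IC,\bar s,c}$, and note that, as in (\ref{eq-berk2}) shifted by $\delta$, the Berkovich comparison theorem gives a $D_{v,d}^\times\times GL_d(F_v)\times W_v$-equivariant isomorphism $\ind_{(D_{v,d}^\times)^0\varpi_v^\Zm}^{D_{v,d}^\times}\hi^{-\delta}i_z^*\Psi_\varrho\simeq\UC^{d-1-\delta}_{\varrho,\Nm}$, whose right hand side is free by the main result of \cite{boyer-duke}; so it suffices to describe $\hi^{-\delta}i_z^*\Psi_\varrho$. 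Since the whole supersingular locus lies in the closed stratum $X^{\geq 1}_{\IC,\bar s,c}$, one has $i_z^*j_{\neq c,!}=0$, so applying $i_z^*$ to (\ref{eq-sec-psi}) yields $\hi^{-\delta}i_z^*\Psi_\varrho\simeq\hi^{-\delta}i_z^*\Psi_{\varrho,!,c}$ and I am reduced to a computation on $\Psi_{\varrho,!,c}$.

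Next I would use the filtration $\Fil^\bullet_*(\Psi_{\varrho,!,c})$ of Proposition \ref{prop-filtration1}, refined as before the statement into a filtration whose graduates $\grr^k(\Psi_{\varrho,!,c})$ become $\PC(t,\pi_v)(\frac{1-t}{2})$ over $\overline\Qm_l$ for some $\pi_v\in\scusp_i(\varrho)$ with $tg_i(\varrho)\leq d$, and which --- by \cite{boyer-duke} --- may be taken over $\overline\Zm_l$ to be intermediate extensions $\lexp p j^{=tg_i(\varrho)}_{c,!*}j^{=tg_i(\varrho),*}_c(-)$ with free cohomology sheaves. A weight argument, exactly as in the proof of Proposition \ref{prop-filtration1}, then forces the spectral sequence computing $\hi^\bullet i_z^*\Psi_{\varrho,!,c}$ from the $\hi^\bullet i_z^*$ of these graduates to degenerate at $E_1$, and the Harris-Taylor stalk computations of \cite{boyer-invent2} together with Lemma \ref{lem-mirabolique} should identify the part landing in degree $-\delta$: for each $i$ with $g_i(\varrho)\mid d$ and $s_i(\varrho):=d/g_i(\varrho)>\delta$, and each $\pi_v\in\scusp_i(\varrho)$, it contributes a $P_d(F_v)\times D_{v,d}^\times\times W_v$-module $\Gamma_G(\pi_v)\otimes\Gamma_D\otimes\Gamma_W(\pi_v)$, where $\Gamma_D$ (resp. $\Gamma_W$) is a stable lattice of $\pi_v[s_i(\varrho)]_D$ (resp. $\Lm_{g_i(\varrho)}(\pi_v)$) and $\Gamma_G(\pi_v)$ is a $P_d(F_v)$-equivariant lattice of $\st_{s_i(\varrho)-\delta}(\pi_v\{-\frac{\delta}{2}\})_{|P_{(s_i(\varrho)-\delta)g_i(\varrho)}(F_v)}\times\speh_\delta(\pi_v\{\frac{s_i(\varrho)-\delta}{2}\})\simeq LT_{\pi_v}(s_i(\varrho)-\delta-1,\delta)_{|P_d(F_v)}$. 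Reordering these supersingular contributions to the front and splitting off the graduates supported in a smaller stratum, exactly as in Proposition \ref{prop-fil-final2}, then yields the claimed filtration of $\UC^{d-1-\delta}_{\varrho,\Nm,free}$.

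The last point --- the assertion on derivative orders of $\Gamma_G(\pi_v)\otimes_{\overline\Zm_l}\overline\Fm_l$ --- is where I expect the real work, since, contrary to Theorem \ref{thm-principal1} where the lattice was literally $RI_{\overline\Zm_l,-}$, I do not hope to identify $\Gamma_G(\pi_v)$ explicitly. Instead the plan is to read it off the description recalled just before the statement: via the mirabolic functor $\tau\times\rho$ defined before Proposition \ref{prop-zele}, $\Gamma_G(\pi_v)$ is the induction of a $P_{(s_i(\varrho)-\delta)g_i(\varrho)}(F_v)$-equivariant lattice $\Gamma'_G$ of $\st_{s_i(\varrho)-\delta}(\pi_v\{-\frac{\delta}{2}\})$ by the, up to homothety, unique stable lattice of $\speh_\delta(\pi_v\{\frac{s_i(\varrho)-\delta}{2}\})$, uniqueness holding because a Speh representation stays irreducible modulo $l$ by \cite{dat-jl} \S 2.2.3. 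Applying Theorem \ref{thm-principal2} to the group $GL_{(s_i(\varrho)-\delta)g_i(\varrho)}(F_v)$ --- the Steinberg part of the stalk being, up to twist, the cohomology $\UC^\bullet$ of the Lubin-Tate tower of that smaller group --- one may take $\Gamma'_G$ with every irreducible $\overline\Fm_l$-subspace isomorphic to $r_l(\tau_{nd})$, hence non degenerate, since $\Phi^-$ and $\Psi^-$ commute with reduction modulo $l$. It then remains to feed this, together with $\dim\speh_\delta(\pi_v\{\ldots\})=\delta g_i(\varrho)>0$, into the exactness of $\Phi^-,\Psi^-$ and Proposition \ref{prop-zele} --- notably part (d), which yields $\Phi^-(\omega)\neq(0)$ for every nonzero $M_d(F_v)$-submodule $\omega$ --- exactly as in the non degeneracy argument closing the previous subsection, to conclude that every irreducible $P_d(F_v)$-equivariant subspace of $\Gamma_G(\pi_v)\otimes\overline\Fm_l$ has derivative of order exactly $g_i(\varrho)$. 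The hard part, as already in Theorem \ref{thm-principal2}, is precisely this last step: extracting the derivative information from the indirect presentation of the lattice, since the explicit form of $\Gamma_G(\pi_v)$ itself is out of reach.
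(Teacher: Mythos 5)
Your geometric skeleton coincides with the paper's: reduce to $\hi^{-\delta}i_z^*\Psi_{\varrho,!,c}$ via (\ref{eq-sec-psi}), refine $\Fil^\bullet_*(\Psi_{\varrho,!,c})$ into graduates that are Harris--Taylor perverse sheaves, use the two inputs from \cite{boyer-duke} (the graduates are $p$-intermediate extensions with free cohomology sheaves) to get $E_1$-degeneration, identify the degree $-\delta$ stalk contribution via the stalk computations of \cite{boyer-invent2} and lemma \ref{lem-mirabolique} as a lattice of $\st_{s_i(\varrho)-\delta}(\pi_v\{-\frac{\delta}{2}\})_{|P}\times\speh_\delta(\ldots)\simeq LT_{\pi_v}(s_i(\varrho)-\delta-1,\delta)_{|P_d(F_v)}$, and control the Steinberg factor $\Gamma'_G$ by induction together with the unicity of the Speh lattice. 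All of that is exactly what the paper does.

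The problem is the last step. You invoke proposition \ref{prop-zele}(d) ($\Phi^-(\omega)\neq(0)$ for nonzero submodules) "exactly as in the non degeneracy argument closing the previous subsection". But (d) pertains to the \emph{other} product, the compactly induced one with the $P$-restricted factor on the right (the sub term of (\ref{eq-sec-indP})); that is the configuration of proposition \ref{prop-RI-M} and theorems \ref{thm-principal1}--\ref{thm-principal2}, where iterating $\Phi^-$ drives irreducible subspaces to the \emph{maximal} derivative order, i.e.\ to $\tau_{nd}$. Here the product $\Gamma'_G\times\speh_\delta$ is the full induction with the $P$-factor on the left (the quotient term of (\ref{eq-sec-indP})), and the assertion to be proved is the opposite one: irreducible subspaces have the \emph{minimal} derivative order $g_i(\varrho)$ among all orders $jg_i(\varrho)$, $1\leq j\leq s_i(\varrho)-\delta$, that occur in the product. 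Applying (d) would, if anything, push toward the wrong conclusion. The tool the paper actually relies on for this configuration is the part-(b)-type commutation $(\tau\times\pi)^{(k)}\simeq\tau^{(k)}\times\pi$ together with the exact sequence (\ref{eq-sec-indP}) (this is what its closing remark cites), combined with the knowledge that every irreducible subspace of $\Gamma'_G\otimes_{\overline\Zm_l}\overline\Fm_l$ is $r_l(\tau_{nd})$; you quote this commutation only in passing and never explain how it rules out subspaces of derivative order $jg_i(\varrho)$ with $j\geq 2$. So the derivative-order claim --- which you yourself identify as the hard point --- is not actually established by the argument you give; you need to replace the appeal to (d) by an argument adapted to the full-induction product (e.g.\ via the adjunction characterizing its submodules in terms of submodules of $\Gamma'_G\otimes\overline\Fm_l$ and the filtration $\tau_k=(\Phi^+)^{k-1}(\Phi^-)^{k-1}\tau$).
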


\rem consider the case where $s=-1$, that is $g_0(\varrho)$ does not divide $d$.
Then we see that the non-degeneracy property which would advocate that irreducible
subspaces of $\UC^{d-1-\delta}_{\varrho,\Nm} \otimes_{\overline \Zm_l} \overline \Fm_l$
should be the less possible degenerate among all the others, is no longer
true for $\delta >0$, even more this is the exact opposite as $g_i(\varrho)$
is the smallest derivative order of all irreducible subquotients of
$\bigl (\Gamma_G \times \speh_\delta(\pi_v \{ \frac{s_i(\varrho)-\delta-1}{2} \}) \bigr )
\otimes_{\overline \Zm_l} \overline \Fm_l$. One way to keep trace of the 
non-degeneracy property might be the following statement which follows trivially from
the isomorphism $(\tau \times \pi)^{(k)} \simeq \tau^{(k)} \times \pi$
for $\tau$ (resp. $\pi$) a representation of $P_d(F_v)$ (resp. $GL_s(F_v)$),
and the short exact sequence (\ref{eq-sec-indP}).

\begin{propo}
Let $\tau$ be an irreducible subspace of the modulo $l$ reduction of
$\Gamma_G \times \speh_\delta(\pi_v \{ \frac{s_i(\varrho)-\delta-1}{2} \})$. 
Then $\tau^{(g_i(\varrho))}$ is non-degenerate.
\end{propo}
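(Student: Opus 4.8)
The plan is to pass to Bernstein--Zelevinsky derivatives and reduce the assertion to the non-degeneracy property of $\Gamma_G$ already obtained in Theorem~\ref{thm-principal2} (which itself rests on Proposition~\ref{prop-RI-M}).

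Set $g:=g_i(\varrho)$ and let $\rho:=\speh_\delta\bigl(\pi_v\{\frac{s_i(\varrho)-\delta-1}{2}\}\bigr)$, a representation of $GL_{\delta g}(F_v)$. By the theorem of Dat quoted after Proposition~\ref{prop-red-modl}, $r_l(\rho)$ is irreducible and $\rho$ has, up to homothety, a unique stable lattice; hence $r_l\bigl(\Gamma_G\times\rho\bigr)\simeq r_l(\Gamma_G)\times r_l(\rho)$ as $\overline{\Fm}_l[M_d(F_v)]$-modules. Since $\Psi^-$ and $\Phi^-$ are exact, so is $\omega\mapsto\omega^{(g)}$, so that from $\tau\hookrightarrow r_l(\Gamma_G)\times r_l(\rho)$ one gets $\tau^{(g)}\hookrightarrow\bigl(r_l(\Gamma_G)\times r_l(\rho)\bigr)^{(g)}$. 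Combining the short exact sequence~(\ref{eq-sec-indP}) with Proposition~\ref{prop-zele}(b),(c), an induction on $k$ gives the isomorphism $(\sigma\times\pi)^{(k)}\simeq\sigma^{(k)}\times\pi$ for $\sigma$ a smooth representation of a mirabolic group $P_n(F_v)$ and $\pi$ a smooth representation of $GL_{n'}(F_v)$: the correction terms that appear when computing $\Phi^-$ of such a product are products with restrictions of $\pi$, and after applying $\Psi^-$ they collapse into the single clean term. Applied with $\sigma=r_l(\Gamma_G)$, $\pi=r_l(\rho)$, $k=g$, and using that $\Psi^-,\Phi^-$ commute with reduction mod $l$, this yields
\[
\tau^{(g)}\ \hookrightarrow\ r_l\bigl(\Gamma_G^{(g)}\bigr)\times r_l(\rho).
\]

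Next I would bring in what is known about $\Gamma_G$. By construction (Theorem~\ref{thm-principal2}, through the $RI_{\overline{\Zm}_l,-}$-lattices), $\Gamma_G$ is, up to the twists of Lemma~\ref{lem-mirabolique}, the restriction to the mirabolic of $RI_{\overline{\Zm}_l,-}(\pi_v,s_i(\varrho)-\delta)$; as the $g$-th derivative of a generalized Steinberg $\st_t(\pi_v)$ is $\st_{t-1}(\pi_v\{\frac12\})$, the lattice $\Gamma_G^{(g)}$ is again, up to twist, the restriction to the mirabolic of $RI_{\overline{\Zm}_l,-}(\pi_v\{\frac12\},s_i(\varrho)-\delta-1)$, so that Proposition~\ref{prop-RI-M} applies once more and every irreducible $M$-subspace of $r_l(\Gamma_G^{(g)})$ is non-degenerate. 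Iterating this — an induction on $s_i(\varrho)-\delta$ whose base case is $\Gamma_G$ a stable lattice of $(\pi_v)_{|P_g(F_v)}\simeq\tau_{nd}$, unique up to homothety because $\pi_v$ is entire cuspidal — one is reduced to checking that an irreducible $M$-submodule of the product of a non-degenerate representation with $r_l(\rho)$ is again non-degenerate; this last point is handled by the compatibility $\Psi^-(\sigma\times\pi)\simeq\Psi^-(\sigma)\times\pi$ of Proposition~\ref{prop-zele}(c) together with the fact that the highest derivative of $\tau_{nd}$ is the trivial representation of $GL_0(F_v)$, which forces the highest derivative of $\tau^{(g)}$ to be non-zero.

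The step I expect to be delicate is the bookkeeping of stable lattices: (\ref{eq-sec-indP}) and Proposition~\ref{prop-zele} are $\overline{\Qm}_l$-statements, and over $\overline{\Zm}_l$ they become assertions about sub-lattices and their mod-$l$ reductions, so at each stage of the recursion one must check that the lattice appearing on the $P$-factor is precisely the reduction of an $RI_{\overline{\Zm}_l,-}$-lattice (so that Proposition~\ref{prop-RI-M} is applicable) and that the Speh factor $r_l(\rho)$ is transparent for the single relevant derivative $\tau^{(g)}$, which uses that $\speh_\delta$ has a unique non-trivial Bernstein--Zelevinsky derivative. Granting these, the remaining manipulations with $\Psi^{\pm}$ and $\Phi^{\pm}$ are formal.
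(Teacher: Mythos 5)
The first half of your argument is sound and does use the two ingredients the paper points to: the mod-$l$ reduction splits across the product because the Speh lattice is unique, the derivative functors are exact, and since $r_l\bigl(\speh_\delta(\cdot)\bigr)^{(j)}=0$ for $0<j<g:=g_i(\varrho)$ the Leibniz filtration of $\bigl(r_l(\Gamma_G)\times r_l(\rho)\bigr)^{(g)}$ collapses to the single term $r_l(\Gamma_G)^{(g)}\times r_l(\rho)$; the identification of $\Gamma_G^{(g)}$ with an $RI_{\overline \Zm_l,-}$-lattice of $\st_{s_i(\varrho)-\delta-1}(\pi_v\{\cdot\})$ is also correct (via $RI_{\overline\Zm_l,-}(\pi_v,t)_{|P}\simeq RI_{\overline\Zm_l,-}(\pi_v\{-\frac12\},t-1)\times(\pi_v\{\cdot\})_{|P_g}$ and Proposition \ref{prop-zele}(b)). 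The problem is the endgame. The statement you reduce to --- \emph{an irreducible $M$-submodule of (non-degenerate)$\,\times\, r_l(\rho)$ is again non-degenerate} --- is false for $\delta\geq 2$: since $\tau_{nd}^{(i)}=0$ for $i\neq r$ and $\speh_\delta^{(j)}=0$ for $j\notin\{0,g\}$, the product $\tau_{nd}\times r_l(\speh_\delta(\cdot))$ has no non-vanishing derivative beyond order $r+g<r+\delta g$, hence admits no non-degenerate submodule whatsoever. It is also not the statement to be proved: the proposition asserts genericity of the $GL_{d-g}(F_v)$-representation $\tau^{(g)}$, not non-degeneracy of $\tau$ --- the remark immediately preceding the proposition insists that the subspaces $\tau$ themselves are \emph{maximally} degenerate.

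The genuine missing idea is that genericity of $\tau^{(g)}$ cannot be extracted from the bare embedding $\tau^{(g)}\hookrightarrow r_l(\Gamma_G^{(g)})\times r_l(\rho)$: irreducible subrepresentations of parabolic inductions of generic representations need not be generic (already $\pi_v\times\pi_v\{1\}$ has $\speh_2(\pi_v\{\frac12\})$ as its socle, and over $\overline\Qm_l$ the unique irreducible subspace of $\st_{t-1}(\cdot)\times\speh_\delta(\cdot)$ in the linked configuration occurring here is the non-generic $LT_{\pi_v}(t-2,\delta)$). What actually has to be established is that the socle of $r_l\bigl(RI_{\overline\Zm_l,-}(\pi_v\{\cdot\},t-1)\bigr)\times r_l(\speh_\delta(\cdot))$ --- or at least the part of it that can arise as $\tau^{(g)}$ for $\tau$ an irreducible subspace of the $P_d(F_v)$-product with derivative order exactly $g$ --- is generic. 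This is a specifically mod-$l$ phenomenon (over $\overline\Qm_l$ the analogous socle is non-generic, and the claim is only non-vacuous mod $l$ because the derivative order of the subspaces drops from $tg$ to $g$), and neither Proposition \ref{prop-zele}(c) nor the fact that $\tau_{nd}^{(r)}$ is the trivial representation of $GL_0(F_v)$ supplies it; Propositions \ref{prop-defi-Vk} and \ref{prop-RI-M} control the socle of the $\st$-factor alone, not of its induction against the Speh factor. Your proof needs a separate argument at precisely this point, and the lattice bookkeeping you flag as the delicate step is in fact the part that is already covered by the preceding results.
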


To sum up, we have seen that an irreducible subspace of 
$\bigl (\Gamma_G \times \speh_\delta(\pi_v \{ \frac{s_i(\varrho)-\delta-1}{2} \}) \bigr )
\otimes_{\overline \Zm_l} \overline \Fm_l$ is necessarily 
with derivative order $g_i(\varrho)$, but among all of them it is the less degenerated one.

\section{Automorphic congruences}

The class number formula for number fields (resp. the Birch-Swinnerton-Dyer conjecture)
asserts that the order of vanishing of the Dedekind zeta function at $s=0$ of a number field $K$ 
(resp. the order of vanishing at $s=1$ of the $L$-function of some elliptic curve $E$ over a
number field $K$) is given by 
the rank of its group of units (resp. by the rank of the Mordell-Weil
group $E(K)$). Both of these statements can be restated in terms of the 
rank of Selmer groups and is
generalized for $p$-adic motivic Galois representations in the Bloch-Kato conjecture.

Since the work of Ribet, one strategy to realize a part of this conjecture is to consider some
automorphic tempered representation $\Pi$ of a reductive group $G/\Qm$ and take a prime
divisor $l$ of some special values of its $L$-function. We try then to construct an automorphic
non tempered representation $\Pi'$ of $G$ congruent to $\Pi$ modulo $l$ in some sense
so that such an automorphic congruence produces a non trivial element in some Selmer group.

For $G$ a similitude group as in \S \ref{para-KHT},
in \cite{boyer-mrl} we show how to produce automorphic congruences from torsion classes
in the cohomology of $\sh_K$ with coefficients in the local system $V_\xi$.
For example, 
\begin{itemize}
\item see corollary 2.9 of \cite{boyer-mrl}, to each non trivial torsion cohomology class of level 
$I$, we can associate an infinite collection of non isomorphic weakly congruent irreducible 
automorphic representations of the same weight and level but each of them being tempered.

\item In section 3 of \cite{boyer-mrl}, we obtained automorphic congruences between tempered 
and non tempered automorphic representations but with distinct weights.

\item In \cite{boyer-stabilization}, using completed cohomology, we construct automorphic 
congruences between tempered and non tempered automorphic representations of the same 
weight but without any
control of their respective level at $l$ which might be an issue to construct then non trivial
elements in some Selmer groups, cf. loc. cit.
\end{itemize}

Another way to interpret the computations of \cite{boyer-stabilization}, is to say that, whatever is the weight
$\xi$, if you take the level at $l$ large enough, then the cohomology groups of your KHT Shimura variety
with coefficients in $V_\xi$ can not be all free, there must exist some non trivial cohomology classes.
The main aim of this section is then to find explicit conditions for the existence of non trivial
cohomology classes with coefficients in $V_\xi$, with the control of the level at $l$.

\subsection{Cohomology groups over $\overline \Qm_l$}

%
%

\begin{defin} \label{defi-degeneracy} (cf. \cite{M-W})
For $\Pi$ an automorphic irreducible representation $\xi$-cohomological of $G(\Am)$,
then, see for example lemma 3.2 of \cite{boyer-aif},  there exists an integer $s$ called the degeneracy depth of $\Pi$,
such that through the Jacquet-Langlands correspondence and base change, its associated 
representation of $GL_d(\Am_\Qm)$ is isobaric of the following form
$$\mu | \det |^{\frac{1-s}{2}} \boxplus \mu | \det |^{\frac{3-s}{2}} \boxplus \cdots \boxplus \mu | \det |^{\frac{s-1}{2}}$$
where $\mu$ is an irreducible cuspidal representation of $GL_{d/s}(\Am_\Qm)$.
\end{defin}

\rem For a place $v$ such that $G(F_v) \simeq GL_d(F_v)$ in the sense of our previous convention,
the local component $\Pi_v$ of $\Pi$ at $v$ is isomorphic to some $\speh_s(\pi_v)$ where $\pi_v$
is an irreducible non degenerate representation, $s \geq 1$ is an integer and $\speh_s(\pi_v)$
is the Langlands quotient of the parabolic induced representation $\pi_v \{ \frac{1-s}{2} \} \times 
\pi_v \{ \frac{3-s}{2} \} \times \cdots \times \pi_v \{ \frac{s-1}{2} \}$. In terms of the Langlands
correspondence, $\speh_s(\pi_v)$ corresponds to $\sigma \oplus \sigma(1) \oplus \cdots \oplus \sigma(s-1)$ 
where $\sigma$ is the representation of $\gal(\bar F/F)$ associated with $\pi_v$ by the local Langlands 
correspondence.

\begin{nota}
For $\pi_v$ an irreducible cuspidal $\overline \Qm_l$-representation of $GL_g(F_v)$ and $t \geq 1$
such that $tg \leq d$, write
$$H^i_{I^v(\oo),!,1}(\pi_v,t,\xi):=\lim_{\atopp{\longrightarrow}{n}}
H^i_c(\sh_{I^v(n),\bar s_v,\overline{1_{tg}}}^{=tg},V_\xi \otimes
j^{=tg,*}_{\overline{1_{tg}}} P(\pi_v,t)_{\overline{1_{tg}}})$$
and its induced version
\begin{multline*}
H^i_{I^v(\oo),!}(\pi_v,t,\xi):=  \lim_{\atopp{\longrightarrow}{n}}
H^i_c(\sh_{I^v(n),\bar s_v}^{= tg},V_\xi \otimes
j^{=tg,*} P(\pi_v,t))  \\ \simeq H^i_{I^v(\oo),!,1}(\pi_v,t)\times_{P_{tg,d}(F_v)} GL_d(F_v).
\end{multline*}
We also consider
$$H^i_{I^v(\oo),!*,1}(\pi_v,t,\xi):=\lim_{\atopp{\longrightarrow}{n}}
H^i(\sh_{I^v(n),\bar s_v,\overline{1_{tg}}}^{\geq tg},V_\xi \otimes 
P(\pi_v,t)_{\overline{1_{tg}}})$$
and
\begin{multline*}
H^i(\pi_v,t,\xi)_{I^v(\oo),!*}:=\lim_{\atopp{\longrightarrow}{n}}
H^i(\sh_{I^v(n),\bar s_v}^{\geq tg},V_\xi \otimes P(\pi_v,t)) \\ \simeq 
H^i_{I^v(\oo),!*,1}(\pi_v,t)\times_{P_{tg,d}(F_v)} GL_d(F_v).
\end{multline*}
\end{nota}

In this section we only consider the $\overline \Qm_l$-cohomology groups and we recall the 
computations of \cite{boyer-compositio}. 

%
%
%
%
\begin{nota} \label{nota-T} 
Let $\Tm^S_\xi$ be the image of $\Tm^S_{abs}$ inside 
$$\bigoplus_{i =0}^{2d-2}
\lim_{\atop{\rightarrow}{I}} H^i(\sh_{I,\bar \eta},V_{\xi,\overline \Qm_l})$$
where the limit concerned the ideals $I$ which are maximal at each places outside $S$.
\end{nota}

For $\Pi^{\oo,v}$ an irreducible representation of $G(\Am^{\oo,v})$,
consider the set $S$ of finite places $w$ of $\Qm$  such that 
$G$, $I$ and $\Pi^{\oo,v}$ are unramified at $w$,
We then consider $\Pi^{\oo,v}$ as a $\Tm^S_{abs}$-module and 
we denote by $[H^i_{I^v(\oo),!}(\pi_v,t,\xi) ] \{ \Pi^{\oo,v} \}$ the associated 
$\Tm^S_{abs}$-isotypic component of $H^i_{I^v(\oo),!}(\pi_v,t,\xi)$.
We will use similar
notations with the cohomology groups introduced above.
Consider now a fixed irreducible cuspidal representation $\pi_v$ of $GL_g(F_v)$.

\begin{propo} \label{prop-coho-Ql} (cf. \cite{boyer-aif} \S 3.2 and 3.3)
Let $\Pi$ be an irreducible automorphic representation of $G(\Am)$ which is $\xi$-cohomological and
with degeneracy depth $s \geq 1$.
\begin{itemize}
\item If $s=1$ then $[H^i_{I^v(\oo),!}(\pi_v,t,\xi) ] \{ \Pi^{\oo,v} \}$ and
$[H^i_{I^v(\oo),!,*}(\pi_v,t,\xi) ] \{ \Pi^{\oo,v} \}$ are all zero for $i \neq 0$.
For $i=0$, if $[H^i_{I^v(\oo),!}(\pi_v,t,\xi) ] \{ \Pi^{\oo,v} \} \neq (0)$ (resp.
$[H^i_{I^v(\oo),!*}(\pi_v,t,\xi) ] \{ \Pi^{\oo,v} \} \neq (0)$) then 
$$\Pi_v \simeq \st_k(\widetilde \pi_v) \times \Pi'_v,$$ 
where $\Pi'_v$ is any irreducible representation, 
$\widetilde \pi_v$ is inertially equivalent to $\pi_v$ and $k \leq t$ (resp. $k=t$).

\item For $s \geq 1$, and $\Pi_v \simeq \speh_s(\pi_v \times \pi'_v)$ for
$\pi'_v$ any irreducible representation of $GL_{\frac{d-sg}{s}}(F_v)$, then
$[H^i_{I^v(\oo),!}(\pi_v,t,\xi) ] \{ \Pi^{\oo,v} \}$ (resp. $[H^i_{I^v(\oo),!*}(\pi_v,t,\xi) ] \{ \Pi^{\oo,v} \}$) is non zero if and only
if $i=s-1$ and $t \geq s$ (resp. $t=s$ and $i \equiv s-1 \mod 2$ with $|i| \leq s-1$).
\end{itemize}
\end{propo}

\rem In \cite{boyer-aif}, we give the complete description of these cohomology groups.

\subsection{Torsion for Harris-Taylor perverse sheaves}

From now on, we fix an irreducible supercuspidal $\overline \Fm_l$-representation $\varrho$ and all the
irreducible cuspidal $\overline \Qm_l$-representation $\pi_v$ considered will be of type $\varrho$.
In \cite{boyer-torsion}, using the adjunction maps $\Id \longrightarrow j^{=h}_* j^{=h,*}$, we 
construct a filtration of stratification 
$$0=\Fil^{-d}_*(\pi_v,\Pi_t) \subset \Fil^{1-d}_*(\pi_v,\Pi_t) \subset \cdots \subset
\Fil^0_*(\pi_v,\Pi_t)=j^{=tg}_{!} HT(\pi_v, \Pi_t),$$
with free gradutates $\gr^{-r}_*(\pi_v,\Pi_t):=\Fil^{-r}_*(\pi_v,\Pi_t)/\Fil^{-r-1}_*(\pi_v,\Pi_t)$ which are trival
except for $r=kg-1$ with $t \leq k \leq s$ and then verifying
\begin{multline*}
\lexp p  j^{= kg}_{!*} HT (\pi_v,\Pi_t \overrightarrow{\times} \st_{k-t}(\pi_v)) \otimes \Xi^{(t-k)/2} \\
\htarrow \gr^{1-kg}_*(\pi_v,\Pi_t) \htarrow  \\
\lexp {p+}  j^{= kg}_{!*} 
HT (\pi_v,\Pi_t \overrightarrow{\times} \st_{k-t}(\pi_v)) \otimes \Xi^{(t-k)/2},
\end{multline*}
where we recall that $\htarrow_+$ means a bimorphism, i.e. both a mono and a epi-morphism,
whose cokernel has support in $\sh^{\geq kg+1}_{\IC,\bar s}$.

\rem In \cite{boyer-duke}, we in fact proved that each of these graded parts are isomorphic to 
the $p$-intermediate extensions.

\begin{lemma} \label{lem-ext-char}
When $g=1$, i.e. $\pi_v=\chi_v$ is a character, then for all $1 \leq t \leq d$, whatever is
the representation $\Pi_t$ of $GL_t(F_v)$, we have
$$\lexp p j^{=t}_{!*} HT(\chi_v,\Pi_t) \simeq \lexp {p+}  j^{=t}_{!*} HT(\chi_v,\Pi_t).$$
\end{lemma}

\begin{proof}
For $\pi_v$ a character, the associated Harris-Taylor local system on $\sh^{=h}_{\IC,\bar s}$
 is just the trivial one
$\overline \Zm_l$ where the fundamental group $\Pi_1(\sh^{=h}_{\IC,\bar s})$ 
acts by its quotient $\Pi_1(\sh^{=h}_{\IC,\bar s}) \twoheadrightarrow \DC_{v,h}^\times$ with
$\DC_{v,h}^\times$ acting by the character $\chi_v$. Then as $\sh^{\geq h}_{\IC,\bar s_v,\overline{1_h}}$
is smooth over $\spec  \overline \Fm_p$, then this Harris-Taylor local system shifted by the dimension
$d-h$, is perverse for both $t$-structures $p$ and $p+$, in particular the two intermediate 
extensions are equal.
\end{proof}

\rem One of the main result of \cite{boyer-duke} is that this equality of perverse extensions remains true
for every Harris-Taylor local systems associated with any irreducible cuspidal representation $\pi_v$ such
that its modulo $l$ reduction is still supercuspidal, i.e. is of $\varrho$-type
$-1$.

\begin{propo}
For any representation $\Pi_t$ of
$GL_t(F_v)$, we have the following resolution of $\lexp p j_{!*}^{=t} HT(\chi_{v},\Pi_t)$
\addtocounter{thm}{1}
\begin{multline} \label{eq-resolution0}
0 \rightarrow j_!^{=d} HT(\chi_{v},\Pi_t \{ \frac{t-s}{2} \} ) \times 
\speh_{d-t}(\chi_{v}\{ t/2 \} ))
 \otimes \Xi^{\frac{s-t}{2}} \longrightarrow \cdots  \\
\longrightarrow j_!^{=t+1} HT(\chi_{v},\Pi_t \{ -1/2 \}  \times \chi_{v} \{ t/2 \} ) 
\otimes \Xi^{\frac{1}{2}} \longrightarrow \\ j_!^{=t} HT(\chi_{v},\Pi_t) 
\longrightarrow  \lexp p j_{!*}^{=t} HT(\chi_{v},\Pi_t) \rightarrow 0.
\end{multline}
\end{propo}

\begin{proof}
As explained in \cite{boyer-duke}, the statement is equivalent to the freeness of
the sheaf cohomology groups of $\lexp p j_{!*}^{=t} HT(\chi_{v},\Pi_t)$ which is
trivial when $\chi_v$ is a character. Indeed,
as the strata $\sh^{\geq h}_{I^v,\bar s_v,1}$ are smooth, then the constant sheaf, up to shift, 
is perverse and so equals to the intermediate extension of the constant sheaf, shifted by $d-h$, 
on $\sh^{=h}_{I^v,\bar s_v,\overline{1_h}}$. 
In particular we have trivially the following resolution
\begin{multline*}
0 \rightarrow j_!^{=d} HT(1_{v},\Pi_t \{ \frac{t-s}{2} \} ) \otimes 
\speh_{d-t}(1_{v}\{ t/2 \} ))
 \otimes \Xi^{\frac{s-t}{2}} \longrightarrow \cdots  \\
\longrightarrow j_!^{=t+1} HT(1_{v},\Pi_t \{ -1/2 \}  \otimes 1_{v} \{ t/2 \} ) 
\otimes \Xi^{\frac{1}{2}} \longrightarrow \\ j_!^{=t} HT(1_{v},\Pi_t) 
\longrightarrow  \lexp p j_{!*}^{=t} HT(1_{v},\Pi_t) \rightarrow 0,
\end{multline*}
where we recall that $\speh_\delta(1_v)$ is the trivial representation of $GL_\delta(F_v)$.
The resolution (\ref{eq-resolution0}) is then just the induced version of the previous
one twisted by $\chi_v$, as $HT(\chi_v,\Pi_t)$ is the $HT(1_v,\Pi_t)$ where the action
of the fundamental group factors through
$$\pi_1(\sh_{I,\bar s_v}^{=t}) \twoheadrightarrow \DC_{v,t}^\times \longmapright{\chi_v} F_v^\times.$$
\end{proof}

\rem In \cite{boyer-duke}, we prove the previous resolution 
more generally for every irreducible cuspidal representation $\pi_v$ of $GL_g(F_v)$,
\addtocounter{thm}{1}
\begin{multline} \label{eq-resolution00}
0 \rightarrow j_!^{=sg} HT(\pi_v,\Pi_t \overrightarrow{\times} \speh_{s-t}(\pi_v)
\otimes \Xi^{\frac{s-t}{2}} \longrightarrow \cdots \longrightarrow \\
j^{=(t+2)g}_! HT(\pi_v,\Pi_t \overrightarrow{\times} \speh_2(\pi_v)) \otimes \Xi^1
\longrightarrow j_!^{=(t+1)g} HT(\pi_v,\Pi_t \overrightarrow{\times} \pi_v) \otimes
\Xi^{\frac{1}{2}} \\ \longrightarrow
j^{=tg}_! HT(\pi_v,\Pi_t) \longrightarrow \lexp p j^{=tg}_{!*} HT(\pi_v,\Pi_t)
\rightarrow 0,
\end{multline}
which is again equivalent to the property that the sheaf cohomology groups of 
$\lexp p j^{=tg}_{!*} HT(\pi_v,\Pi_t)$ are torsion free. 

%

%
In \cite{boyer-mrl} we prove that torsion classes arising in some cohomology group of the whole Shimura
variety, can be raised in characteristic zero to some automorphic tempered representation of $G(\Am)$ in 
the following sense.

\begin{defin}
A torsion class either in $H^i_{I^v(\oo),!*}(\pi_v,t,\xi)_{\mathfrak m}$ or in 
$H^i_{I^v(\oo),!}(\pi_v,t,\xi)_{\mathfrak m}$,
is said tempered $\xi$-cohomological if there exists
an irreducible automorphic and $\xi$-cohomological tempered representation $\Pi$ unramified outside
$I$ and $p$ with $\Pi^\oo$ a sub-quotient of $\lim_{\rightarrow n} 
H^{d-1}(\sh_{I^v(n),\bar \eta},V_{\xi,\overline \Qm_l})_{\mathfrak m}$.
\end{defin}

From now on we denote by $\varrho$ a $\overline \Fm_l$-character of $F_v^\times$
which could be, if we admit the results of \cite{boyer-duke}, any irreducible
$\overline \Fm_l$-supercuspidal representation of $GL_{g_{-1}(\varrho)}(F_v)$. We will
write the statements and the proofs in the general case. We
moreover suppose that 
$$d=g_{-1}(\varrho)m(\varrho)l^u$$ 
and we will pay attention to
irreducible $GL_d(F_v)$-sub-quotients of either $H^i_{I^v(\oo),!*}(\pi_v,t,\xi)_{\mathfrak m}[l]$ or 
$H^i_{I^v(\oo),!}(\pi_v,t,\xi)_{\mathfrak m}[l]$, isomorphic to $\rho_u$.

\begin{lemma} \label{lem-torsion-rel}
Consider $\pi_{v,i} \in \cusp_{i}(\varrho)$ for $i \geq -1$. 
Suppose there exists a $GL_d(F_v)$-irreducible sub-quotient
of $H^j_{I^v(\oo),!*}(\pi_{v,i},t,\xi)_{\mathfrak m}[l]$ (resp. 
$H^j_{I^v(\oo),!}(\pi_{v,i},t,\xi)_{\mathfrak m}[l]$), 
isomorphic to $\rho_u$, then $j \in \{ 0,1 \}$ (resp. $j=1$).
\end{lemma}

\begin{proof} 
(a) Consider first the case of $i=-1$.
We argue by induction from $t=s=m(\varrho)l^u$ to $t=1$ with both 
$H^j_{I^v(\oo),!,1}(\pi_{v,-1},t,\xi)_{\mathfrak m}$
and $H^j_{I^v(\oo),!*}(\pi_{v,-1},t,\xi)_{\mathfrak m}$. 
Concerning $H^j_{I^v(\oo),!*}(\pi_{v,-1},t,\xi)_{\mathfrak m}$, 
recall that, as $\pi_{v,-1} \in \cusp_{-1}(\varrho)$ so that\footnote{cf. the lemma \ref{lem-ext-char} for a character and \cite{boyer-duke} for the general case.} whatever is the
representation $\Pi_t$ of $GL_{tg_{-1}(\varrho)}(F_v)$,
$$\lexp p j^{=tg}_{!*} HT(\pi_{v,-1},\Pi_t)  \simeq \lexp {p+} j^{=tg}_{!*} HT(\pi_{v,-1},\Pi_t),$$
then we only have to consider the case $j \leq 0$.
By Artin's theorem, see for example theorem 4.1.1 of \cite{BBD}, using the 
affiness of $\sh^{=h}_{I,\bar s_v}$, 
we know that $H^j_{I^v(\oo),!}(\pi_{v,-1},t,\xi)_{\mathfrak m}$ is zero for every $j<0$ and 
is torsion free for $j=0$.

- Note first that for $t=s$, then $HT(\pi_{v,-1},\Pi_s)$ has support in dimension zero, so that 
$H^j_{I^v(\oo),!*}(\pi_{v,-1},s,\xi)_{\mathfrak m}=H^j_{I^v(\oo),!}(\pi_{v,-1},s,\xi)_{\mathfrak m}$
 is zero for $j \neq 0$ and free for $j=0$, 
so the result is trivially true. 

- Suppose by induction, the result is true for all $t'>t$ and consider 
the case of $H^j_{I^v(\oo),!*}(\pi_{v,-1},t,\xi)_{\mathfrak m}$ through the spectral sequence 
associated with the  resolution (\ref{eq-resolution0}). Note first that concerning irreducible 
sub-quotients of the $l$-torsion of the cohomology groups which are $GL_d(F_v)$-isomorphic 
to $\rho_u$, then we can truncate
(\ref{eq-resolution0}) to the short exact sequence of its last three terms.
\addtocounter{thm}{1}
\begin{multline} \label{eq-resolution1}
0 \dashrightarrow j_!^{= (t+1)g} HT(\pi_{v,-1},\Pi_t \overrightarrow{\times} \pi_{v,-1}) \otimes \Xi^{\frac{1}{2}} 
\longrightarrow  \\
j^{=tg}_! HT(\pi_{v,-1},\Pi_t) \longrightarrow \lexp p j^{=tg}_{!*} HT(\pi_{v,-1},\Pi_t)
\rightarrow 0.
\end{multline}
Then considering our problem for $H^j_{I^v(\oo),!*}(\pi_{v,-1},t,\xi)_{\mathfrak m}$, 
for $j \leq -1$, there is no torsion with an irreducible sub-quotient isomorphic to
$\rho_u$.
%
We are then done with $H^j_{I^v(\oo),!*}(\pi_{v,-1},t,\xi)_{\mathfrak m}$.
The result about $H^j_{I^v(\oo),!}(\pi_{v,-1},t,\xi)_{\mathfrak m}$, then follows from the long 
exact sequence associated with (\ref{eq-resolution1}) using the fact that for $j=0$,
it is torsion free.
%

\medskip

(b) Consider now the case $i \geq 0$. Recall, cf. \cite{dat-jl} proposition 2.3.3, that the semi-simplification of
the modulo $l$ reduction of $\pi_{v,i}[t]_D$, does not depend of the choice of a stable lattice, and is equal to
$$\sum_{k=0}^{m(\varrho)l^i -1}\tau\{ - \frac{m(\varrho)l^i-1}{2} +k\}$$ 
where $\tau$ is the modulo $l$ reduction of $\pi_{v,-1}[tm(\varrho)l^i]_D$ which is irreducible, and
$\tau \{ n \}:=\tau \otimes q^{-n \val \circ \nrd}$ where $\nrd$ is the reduced norm.
In particular for any representation $\Pi_t$ of $GL_{tg_{-1}(\varrho)}(F_v)$, we have
\addtocounter{thm}{1}
\begin{multline} \label{eq-F-chgt2}
m(\varrho)l^i \Fm \Bigl [  j_!^{= tm(\varrho)l^ig_{-1}(\varrho)} HT(\pi_{v,-1},\Pi)\Bigr ] =m(\varrho)l^{i}
j_!^{= tm(\varrho)l^ig_{-1}(\varrho)} \Bigl [ \Fm HT(\pi_{v,-1},\Pi) \Bigr ]  \\ = 
j_!^{= tg_i(\varrho)} \Bigl [ \Fm HT(\pi_{v,i},\Pi) \Bigr ] = \Fm \Bigl [ 
j_!^{= tg_i(\varrho)} HT(\pi_{v,i},\Pi) \Bigr ],$$
\end{multline}
where $\Fm(\bullet)=\bullet \otimes^\Lm_{\overline \Zm_l} \overline \Fm_l$.
By the computation of \cite{boyer-compositio} \S 5, we note that for $j > 0$, 
the irreducible sub-quotients of
$H^j(\sh_{I,\bar s_v},j^{=tg}_{!} HT(\pi_{v,-1},\Pi_t) \otimes V_\xi) 
\otimes_{\overline \Zm_l} \overline \Qm_l$ are not tempered except if $t=s-1$
and $j=1$. Then concerning sub-quotients isomorphic to $\rho_u$,
the only case where it can appeared in the modulo $l$ reduction of some irreducible
sub-quotient of the free quotient of $H^j(\sh_{I,\bar s_v},j^{=tg}_{!} HT(\pi_{v,-1},\Pi_t) \otimes V_\xi)$ is when 
either $(t,j)=(s-1,1)$ or $j=0$.
The result about $H^j_{I^v(\oo),!}(\pi_{v,i},t,\xi)_{\mathfrak m}[l]$ 
then follows from the previous case where $i=-1$ using (\ref{eq-F-chgt2}) and the following 
wellknown short exact sequence
$$0 \to H^n(X,\PC) \otimes_{\Zm_l} \Fm_l \longrightarrow H^n(X,\Fm \PC ) 
\longrightarrow H^{n+1}(X,\PC) [l] \to 0,$$
for any $\Fm_q$-scheme $X$ and any $\Zm_l$-perverse free sheaf $\PC$.

Then the result about the cohomology of $\lexp p j^{=tg}_{!*} HT(\pi_{v,i},\Pi_t)$ follows from
the resolution analog of (\ref{eq-resolution1}), and the case of $\lexp {p+} j^{=tg}_{!*} HT(\pi_{v,i},\Pi_t)$
is obtained by Grothendieck-Verdier duality.
\end{proof}

\subsection{Tempered and non tempered congruences}
\label{para-torsion-classes1}

\begin{propo} \label{prop-congruence}
Let $\Pi$ be  an irreducible automorphic cuspidal representation of $G(\Am)$ verifying the 
following properties:
\begin{itemize}
\item it is $\xi$-cohomological with non trivial invariant under some fixed $I \in \IC$;

\item its degeneracy depth is equal to $s>1$;

\item its local component at $v$ is isomorphic to $\speh_s(\pi_v)$ with 
$\pi_v \in \cusp(\varrho,-1)$
and where\footnote{For $\pi_v$ the trivial character, the hypothesis $d=g_u(\varrho)$ for $u=0$ 
is equivalent to ask that the order of $q \in \Fm_l$, which is the cardinal of the residue field of 
$F_v$, is equal to $d$.} $d=g_u(\varrho)$ for some $u \geq 0$.
\end{itemize}
Denote by $\mathfrak m$ the maximal ideal of $\Tm_I$ associated with $\Pi$.
Then for any $w \in \spl$ such that $I_w$ is maximal, and distinct from $l$, there exists an irreducible tempered 
representation $\Pi(w)$ of $G(\Am)$ such that:
\begin{itemize}
\item it is $\xi$-cohomological, 

\item of level $I(w)=I^wI_w$ where $I_w$ is the subgroup of elements of
$GL_d(\OC_w)$ which, modulo the maximal ideal of $\OC_w$, belong to
the parabolic subgroup $P_{1,d}(\kappa(w))$;

\item $\Pi(w)$ is weakly $\mathfrak m$-congruent to $\Pi$ in the sense 
it shares the same multiset of Satake's parameters than $\Pi$ outside $I(w)$. 
\end{itemize}
\end{propo}

\rem In particular for $s=2$, as in Ribet's proof of Herbrand theorem, we should obtain a non 
trivial element in the Selmer group
of the adjoint representation of the Galois $\overline \Fm_l$-representation associated with 
$\mathfrak m$.

Thanks to the main result of \cite{boyer-mrl}, it suffices to prove that 
under the previous hypothesis, the torsion of $H^1(\sh_{I,\bar \eta_v},V_\xi[d-1])_{\mathfrak m}$
is non trivial. Note moreover that $\Pi(w)_w$ looks like $\st_2(\chi_w)\times
\chi_{w,1} \times\cdots \times \chi_{w,d-2}$ for unramified characters 
$\chi_w$, $\chi_{w,1},\cdots,\chi_{w,d-2}$.


\begin{proof}
Thanks to the main result of \cite{boyer-mrl}, it suffices to prove that 
under the previous hypothesis, the torsion of $H^1(\sh_{I,\bar \eta_v},V_\xi[d-1])_{\mathfrak m}$
is non trivial. To do so,
consider the spectral sequence 
$$E_1^{p,q}=H^{p+q}(\sh_{I,\bar s_v},\grr^{-p}_{!,\varrho})_{\mathfrak m} \Rightarrow
H^{p+q}(\sh_{I,\bar \eta_v},V_\xi[d-1])_{\mathfrak m}$$
associated with the filtration $\Fill^{\bullet}_{!,\varrho}$ of $\Psi_\varrho$.
Up to translation we may suppose that $E_1^{p,q}=0$ for all $p<0$.
\begin{itemize}
\item The first idea to construct torsion classes, 
could be to find some non trivial torsion classes in the $E_1$-page, i.e.
in the cohomology of the Harris-Taylor perverse sheaves. For example in \cite{boyer-aif} proposition 4.5.1,
we prove that if the modulo $l$ reduction of such $\pi_v$ is cuspidal but not supercuspidal, 
then, for a well chosen level, the cohomology groups of the associated Harris-Taylor perverse 
sheaves, 
can not be all free, so there is torsion on the $E_1$ page. Unfortunately it seems not so clear
that such torsion cohomology class remains in the $E_\oo$-page.

\item The idea is then to produce torsion in the $E_2$ page by finding a map $d_1^{p,q}$ with
$$\xymatrix{
E_1^{p,q} \otimes_{\overline \Zm_l} \overline \Qm_l 
\ar[rr]^{d_1^{p,q} \otimes_{\overline \Zm_l} \overline \Qm_l} \ar@{->>}[d] & & E_1^{p+1,q}
\otimes_{\overline \Zm_l} \overline \Qm_l \\
Q \ar[rr]^\sim && Q' \ar@{^{(}->}[u]
}$$
such that the $\overline \Zm_l$-lattices of $Q$ and $Q'$ respectively induced by $E_1^{p,q}$
and $E_1^{p+1,q}$, are not isomorphic.
\end{itemize}
%
%
%
First note that over $\Qm_l$:
\begin{itemize}
\item $E_1^{-r,r} \otimes_{\overline \Zm_l} \overline \Qm_l$ has a direct factor isomorphic to 
$(\Pi^{\oo,v})^I \otimes \st_s(\pi_v) \otimes \Lm(\pi_v) (\frac{1-s}{2})$ where we recall that 
the contragredient 
of $\Lm(\pi_v)$ is the Galois representation attached to $\pi_v$ by the local
Langlands correspondance;

\item $d_1^{-r,r} \otimes_{\overline \Zm_l} \overline \Qm_l$ induces a injection from the previous
direct factor into a direct factor of $E_1^{-r+1,r}$ which, as a representation of $GL_d(F_v)$,
is parabolically induced from $P_{(s-1)g_{-1}(\varrho),d}(F_v)$ to $GL_d(F_v)$.
\end{itemize}
From the last remark of the previous section,  
$\ind_{(D_{v,d}^\times)^0 \varpi_v^\Zm}^{D_{v,d}^\times} \lexp p \hi^0 i_z^* E_1^{-r,r}$
as a $GL_d(F_v)$-representation, has a sub-space isomorphic to
 $\Gamma_G(\pi_v)$
where $\Gamma_G(\pi_v)$ is a stable lattice of $\st_s(\pi_v)$
such that $\rho_u$ is the only irreducible sub-representation of 
$\Gamma_G(\pi_v) \otimes_{\overline \Zm_l} \overline \Qm_l$. Moreover we know that
$\rho_u$ can not be a sub-space of a parabolically induced representation.
From these facts we conclude that the torsion of $E_2^{-r+1,r}$ is non trivial and more precisely
that $\rho_u$ is a sub-quotient of $E_2^{-r+1,r}[l]$.

If $\rho_u$ as a sub-quotient of $E_2^{-r+1,r}[l]$ remains a
subquotient of $E_\oo^1[l]$ then we are done. Suppose by absurdity it is not the case. 
First about the free quotient $E_{k,free}^{p,q}$ of the $E_k^{p,q}$, 
we know from\footnote{see also \S 3.2 of \cite{boyer-aif} and more specially proposition 3.2.5} 
\cite{boyer-compositio} that:
\begin{itemize}
\item if $\rho_u$ is a sub-quotient of $E_{1,free}^{p,q} \otimes_{\overline \Zm_l} \overline \Fm_l$
with $p+q \neq 0$, then $\grr_{!,\varrho}^{-p}$ is isomorphic to some $P(\pi_v,s_i(\varrho)-1)$
with $\pi_v \in \cusp(\varrho,i)$ and then $p+q=\pm 1$;

\item for $k \geq 2$ and $p+q \neq 0$, as the $\overline \Qm_l$-spectral sequence degenerates
in $E_2$ and that for $n \neq 0$, 
$E_\oo^n \otimes_{\overline \Zm_l} \overline \Qm_l$ does not have a tempered sub-quotient,
then $\rho_u$ is never a sub-quotient of
$E_{k,free}^{p,q} \otimes_{\overline \Zm_l} \overline \Fm_l$.
\end{itemize}
Then there must exist $(p,q)$ and a torsion class in $(E_{1,tor}^{p,q})_{\mathfrak m}$ with
$p+q=2$ such that $\rho_u$ is a sub-quotient of its $l$-torsion which contradicts lemma \ref{lem-torsion-rel}.
\end{proof}
%

\bibliographystyle{plain}
\bibliography{bib-ok}

\address{Universit\'e Sorbonne Paris Nord\\
LAGA, CNRS, UMR 7539 \\ 
 F-93430, Villetaneuse, France,
CoLoss ANR AAPG2019}

\email{boyer@math.univ-paris13.fr}

\end{document}